\documentclass[11pt]{article}
\usepackage{amsfonts}
\usepackage{color}
\usepackage{amsmath,amssymb,comment}
\newtheorem{theo}{Theorem}[section]
\newtheorem{lem}[theo]{Lemma}
\newtheorem{cor}[theo]{Corollary}

\newtheorem{defi}[theo]{Definition}

\newcommand{\mysection}[1]{\section{#1} \setcounter{equation}{0}}
\newcommand{\proof}{{\sc Proof.} \quad}
\newcommand{\proofc}{{\sc Proof} \ }
\newcommand{\be}{\begin{equation} \label}
\newcommand{\ee}{\end{equation}}
\newcommand{\bea}{\begin{eqnarray}\label}
\newcommand{\eea}{\end{eqnarray}}
\newcommand{\bas}{\begin{eqnarray*}}
\newcommand{\eas}{\end{eqnarray*}}
\newcommand{\bit}{\begin{itemize}}
\newcommand{\eit}{\end{itemize}}
\newcommand{\qed}{\hfill$\Box$ \vskip.2cm}
\newcommand{\nn}{\nonumber}
\newcommand{\R}{\mathbb{R}}
\newcommand{\N}{\mathbb{N}}
\newcommand{\pO}{\partial\Omega}

\newcommand{\eps}{\varepsilon}

\newcommand{\supp}{{\rm supp} \, }

\newcommand{\wto}{\rightharpoonup}
\newcommand{\wsto}{\stackrel{\star}{\rightharpoonup}}
\newcommand{\hra}{\hookrightarrow}
\newcommand{\io}{\int_\Omega}

\newcommand{\Del}{\Delta}
\newcommand{\del}{\delta}
\newcommand{\al}{\alpha}
\newcommand{\vt}{\vartheta}

\newcommand{\pa}{\partial}
\newcommand{\bom}{\overline{\Omega}}
\newcommand{\Om}{\Omega}

\newcommand{\wh}{\widehat}
\newcommand{\wt}{\widetilde}
\newcommand{\vs}{\vspace*}
\newcommand{\hs}{\hspace*}

\newcommand{\vp}{\varphi}
\newcommand{\lbal}{\left\{ \begin{array}{l}}
\newcommand{\lball}{\left\{ \begin{array}{ll}}
\newcommand{\ear}{\end{array} \right.}

\definecolor{orange}{rgb}{1,0.45,0}

\newcommand{\abs}{\\[5pt]}

\newcommand{\adb}{\allowdisplaybreaks}
\newcommand{\tm}{T_{max}}
\newcommand{\tme}{T_{max,\eps}}
\newcommand{\ts}{t_\star}

\newcommand{\ueps}{u_\eps}
\newcommand{\veps}{v_\eps}
\newcommand{\weps}{w_\eps}

\newcommand{\yeps}{y_\eps}

\newcommand{\Teps}{\Theta_\eps}
\newcommand{\gaeps}{\gamma_\eps}
\newcommand{\hgeps}{\wh{\gamma}_\eps}
\newcommand{\Gaeps}{\Gamma_\eps}
\newcommand{\wepsx}{w_{\eps x}}
\newcommand{\wepsxx}{w_{\eps xx}}

\newcommand{\wepst}{w_{\eps t}}
\newcommand{\vepsx}{v_{\eps x}}
\newcommand{\vepsxx}{v_{\eps xx}}
\newcommand{\vepsxxx}{v_{\eps xxx}}
\newcommand{\vepsxxxx}{v_{\eps xxxx}}
\newcommand{\vepst}{v_{\eps t}}
\newcommand{\vepsxt}{v_{\eps xt}}
\newcommand{\vepsxxt}{v_{\eps xxt}}
\newcommand{\uepsx}{u_{\eps x}}
\newcommand{\uepsxx}{u_{\eps xx}}
\newcommand{\uepsxxx}{u_{\eps xxx}}
\newcommand{\uepsxxxx}{u_{\eps xxxx}}
\newcommand{\uepsxxxxx}{u_{\eps xxxxx}}
\newcommand{\uepst}{u_{\eps t}}

\newcommand{\uepsxxt}{u_{\eps xxt}}
\newcommand{\uepsxxxt}{u_{\eps xxxt}}
\newcommand{\Tepsx}{\Theta_{\eps x}}
\newcommand{\Tepsxx}{\Theta_{\eps xx}}
\newcommand{\Tepst}{\Theta_{\eps t}}
\newcommand{\Tepstt}{\Theta_{\eps tt}}

\newcommand{\hg}{\wh{\gamma}}
\newcommand{\epss}{\eps_\star}
\newcommand{\epsss}{\eps_{\star\star}}

\newcommand{\whu}{\wh{u}}
\newcommand{\wtu}{\widetilde{u}}
\newcommand{\wtt}{\widetilde{\Theta}}
\newcommand{\Ts}{T_\star}
\begin{document}
\adb
\title{Local strong solutions in a quasilinear Moore-Gibson-Thompson type model
for thermoviscoelastic evolution in a standard linear solid}
\author{
Leander Claes\footnote{claes@emt.uni-paderborn.de}\\
{\small Universit\"at Paderborn}\\
{\small Institut f\"ur Elektrotechnik und Informationstechnik}\\
{\small 33098 Paderborn, Germany}
\and
Michael Winkler\footnote{michael.winkler@math.uni-paderborn.de}\\
{\small Universit\"at Paderborn}\\
{\small Institut f\"ur Mathematik}\\
{\small 33098 Paderborn, Germany}}
\date{}
\maketitle
\begin{abstract}
\noindent
This manuscript is concerned with the evolution system
\bas
	\lbal
	u_{ttt} + \al u_{tt} = \big(\gamma(\Theta) u_{xt}\big)_x + \big( \hg(\Theta) u_x\big)_x, \\[1mm]
	\Theta_t = D \Theta_{xx} + \Gamma(\Theta) u_{xt}^2,
	\ear
\eas
which arises as a simplified model for heat generation during acoustic wave propagation in a one-dimensional viscoelastic medium of
standard linear solid type.\abs
Under the assumptions that $D>0$ and $\al\ge 0$, and that $\gamma, \hg$ and $\Gamma$ are sufficiently smooth
with $\gamma>0, \hg>0$ and $\Gamma\ge 0$ on $[0,\infty)$, for suitably regular initial data a statement on local
existence and uniqueness of solutions in an associated Neumann problem is derived in a suitable framework of strong solvability.\abs
\noindent {\bf Key words:} nonlinear acoustics; Moore-Gibson-Thompson equation; thermoviscoelasticity\\
{\bf MSC 2020:} 74H20 (primary); 74F05, 35L05 (secondary)
\end{abstract}
%
%
%
%
%35D30 Weak solutions to PDEs
%35L05 Wave equation
%74H20 Existence of solutions of dynamical problems in solid mechanics
%74F05 Thermal effects in solid mechanics
%
%
%
\newpage
\section{Introduction}\label{intro}
The mathematical analysis of models for the generation of heat during acoustic wave propagation has considerably thriven
during the past few decades.
Beyond the classical systems of thermoelasticity (\cite{slemrod}, \cite{racke90}, \cite{racke_shibata})
and thermoviscoelasticity
(\cite{dafermos}, \cite{roubicek}, \cite{blanchard_guibe}, \cite{racke_zheng_JDE1997}) that have undergone thorough
investigation already in the 1980s and 1990s,
more recently the literature has also addressed models in which the evolution equations governing the
corresponding mechanical parts lose the character of classical wave equations either by involving certain solution-dependent
degeneracies, or by containing third-order time derivatives.
Typical examples of the former type include second-order equations of Westervelt type,
for which due to the presence of quadratic nonlinearities a comprehensive solution theory seems to have been established
only in settings of suitably small initial data
(\cite{nikolic_said_JDE2022}, \cite{nikolic_said_NON2022}, \cite{wilke}, \cite{benabbas_said_SIMA2024}),
even in the absence of couplings to temperature fields (\cite{kaltenbacher_lasiecka_DCDSS}, \cite{meyer_wilke_AMOP},
\cite{doerfler});
similar observations apply to the related class of Kuznetsov equations (\cite{mizohata_ukai}, \cite{dekkers}).\abs
Acoustic models of Moore-Gibson-Thompson type (\cite{moore_gibson}, \cite{thompson})
go along with significantly increased challenges already due to the mere
inclusion of third-order time derivatives, as forming one of their core characteristics.
Accordingly, dichotomies between stability and instability newly arise even at levels of linear models,
which in the context of the prototype
\be{MGT}
	\tau u_{ttt} + \al u_{tt} = \hg \Del u_t + \gamma \Del u,
\ee
with constant positive parameters $\tau,\al,\hg$ and $\gamma$,
become manifest in statements on
large time decay under the condition $\al\hg>\tau\gamma$, and on the occurrence of
infinite-time grow-up when $\al\hg<\tau\gamma$ (\cite{kal_las_marchand2011}, \cite{marchand_mcdevitt_triggiani2012},
\cite{delloro_pata_AMOP2017}, \cite{chen_ikehata}; cf.~also
\cite{chen2025} and \cite{chen_ikehata} for associated Cauchy problems in $\R^n$, as well as
\cite{lasiecka_wang}, \cite{lasiecka_wang2}, \cite{delloro_las_pata2016} and \cite{alves} for variants involving memory terms).\abs
The analysis of nonlinear Moore-Gibson-Thompson models so far seems to have mainly concentrated on semilinear cases
including various types of forcing terms which either depend on the solution itself or on derivatives thereof.
A prominent representative is the Jordan-Moore-Gibson-Thompson equation
\be{JMGT}
	\tau u_{ttt} + \al u_{tt} = \hg \Del u_t + \gamma \Del u + k (u^2)_{tt},
\ee
which is known to admit global solutions for suitably small initial data,
either in bounded domains within the dissipative regime $\al\hg>\tau\gamma$
(\cite{kaltenbacher_lasiecka_pospieszalska}), or when posed as a Cauchy problem in the whole space
(\cite{racke_saidhouari}, \cite{said_JMGT});
on the other hand, a recent result asserts finite-time blow-up of some solutions
with respect to the norm of $u$ in $L^\infty$ (\cite{nikolic_win}).
Studies addressing relatives involving different alternative types of nonlinearities, partially depending on solutions gradients,
can be found documented with regard to global existence of small-data solutions in \cite{kaltenbacher_nikolic_M3AS2019},
and with respect to findings on nonexistence as well as blow-up in
\cite{chen2022}, \cite{chen2020}, \cite{chen_ikehata}, \cite{ming} and \cite{chen2021}, for instance.\abs
{\bf A quasilinear system involving Moore-Gibson-Thompson dynamics with temperature-dependent ingredients.} \quad
The present manuscript now focuses on a class of models which may be viewed as extensions of (\ref{MGT}) to situations
in which elastic parameters may depend on an external variable that forms an additional unknown.
Such types of couplings have been studied in contexts of Westervelt-Pennes systems in which
the mechanical part, describing an acoustic pressure $u$,
can be obtained by letting $\tau=0$ and considering $\gamma$ and $k$ as functions of the unknown
temperature that obeys a suitably forced heat equation (\cite{nikolic_said_JDE2022}, \cite{nikolic_said_NON2022},
\cite{benabbas_said_ZAMP2025}, \cite{benabbas_said_SIMA2024}, \cite{careaga_nikolic_said_JNLS2025});
further quasilinear examples, similarly including time derivatives of up to second order only,
arise in the modelling of acoustic wave propagation interacting with temperature fields in
solid materials of Kelvin-Voigt type and with temperature-dependent elastic parameters
(\cite{claes_lankeit_win}, \cite{claes_win}, \cite{fricke}, \cite{meyer}, \cite{win_AMOP}, \cite{win_SIMA}).
Studies on models that couple heat dynamics to Moore-Gibson-Thompson type equations involving third-order time derivatives
such as (\ref{MGT}), however, seem to have predominantly focused
on linear systems so far (see \cite{alves2013}, \cite{delloro_pata}, \cite{wang_liu_racke} and \cite{adachi} for some examples).
An apparently single exception has recently been concerned with the so-called Moore-Gibson-Thompson-Pennes system in which
the coefficients $\hg$, $\gamma$ and $k$ in (\ref{JMGT}) are allowed to exhibit some variations with respect to a temperature variable
the evolution of which in turn is governed by a heat equation that contains sources depending on $u_t$;
a corresponding result on local existence of solutions emanating from suitably small initial data has been derived in
\cite{benabbas_said_NONRWA}.\abs
To describe our specific object of study and its application background, let us consider material in which the viscoelastic properties are described by a Zener-type model (\cite{GutierrezLemini2014}).
The relation between mechanical stress \(T\) and strain \(S\) is described as
\be{zener}
	T + \tau_\mathrm{rel} T_t = c(\tau_\mathrm{ret} S_t + S) ,
\ee
with the relaxation and retardation time constants (\(\tau_\mathrm{rel}\) and \(\tau_\mathrm{ret}\)) and the stiffness \(c\).
Inserting the one-dimensional equation of motion, which links the mechanical stress to the displacement \(u\) via the density \(\rho \)
according to
\be{motion}
	T_x = \rho u_{tt}
\ee
and the constitutive equation for the mechanical strain in one-dimensional settings,
\be{strain}
	S = \rho u_x,
\ee
yields the Moore-Gibson-Thompson equation
\be{mgt_phy}
	\tau_\mathrm{rel} u_{ttt} + u_{tt} = \left(\frac{\tau_\mathrm{rel} c}{\rho} u_{xt}\right)_x + {\left(\frac{c}{\rho} u_x\right)}_x
\ee
for the displacement \(u\).
Assuming that \(c\) depends on the temperature field \(\Theta \), we choose the following parametrization for the differential equation for \(u\):
\be{mgt_1d}
	u_{ttt} + \al u_{tt} = {\left(\gamma(\Theta) u_{xt}\right)}_x + {\left(\hg(\Theta) u_x\right)}_x .
\ee
This implies that \(\gamma(\Theta) \propto \hg(\Theta)\), however, the assumption can be dropped if temperature-dependent viscoelastic properties are to be considered.\abs
Effects on the temperature field are considered in the form of a diffusion equation.
As the application focus here is on high-frequency periodic processes like acoustic wave propagation,
the influence of linear direct and inverse thermoelasticity may be neglected, as their reversible nature would not yield a net transfer of mechanical to thermal energy on a temporal average.
However, mechanical losses induced by the odd-numbered derivatives in the More-Gibson-Thompson equation (\ref{mgt_1d}) are irreversible and thus result in long-term temperature increases.
To quantify the converted energy, the work \(P\) done on the material is given as
\be{work}
	P = T \cdot S_t
\ee
(\cite{Boley2012}).
Inserting the material description provided by the Zener-type model (equation~\ref{zener}) yields the identity
\be{more_work}
	P = c S \cdot S_t + \tau_\mathrm{ret} c S_t^2 - \tau_\mathrm{rel} S_t \cdot T_t .
\ee
The first term in this expression (\(c S S_t\)) is the stored potential mechanical energy in the material and easily shown to be zero on average for harmonic processes.
Both remaining terms describe mechanical losses which, by conservation of energy, are converted into thermal energy.
Given the physically motivated condition that \(\tau_\mathrm{rel} < \tau_\mathrm{ret}\) (\cite{Boley2012}) and assuming that the elastic behavior is dominant as compared to mechanical losses we may approximate \(T_t \approx c S_t\) in (\ref{more_work}) and arrive at a single term for the mechanical loss density \(Q\), which is identical to the thermal source density
\be{thermal_work}
	Q = (\tau_\mathrm{ret} - \tau_\mathrm{rel}) c S_t^2 ,
\ee
which can now also be expressed in terms of the mechanical displacement \(u\)
\be{thermal_work_u}
	Q = (\tau_\mathrm{ret} - \tau_\mathrm{rel}) c u_{xt}^2
\ee
without requiring the analysis of an integro-differential equation.
Inserting the thermal source density into a one-dimensional diffusion equation for the temperature field \(\Theta \) yields
the heat equation
\be{thermals_phy}
	\Theta_t = D \Theta_{xx} + (\tau_\mathrm{ret} - \tau_\mathrm{rel}) c u_{xt}^2
\ee
with exclusively local sources, whence again assuming temperature-dependent elastic properties we end up with
\be{thermals}
	\Theta_t = D \Theta_{xx} + \Gamma(\Theta) u_{xt}^2 .
\ee
Supplementing (\ref{mgt_1d})-(\ref{thermals}) by reasonable initial and boundary conditions, we are thus led to
considering the problem
\be{0}
	\lball
	u_{ttt} + \al u_{tt} = \big(\gamma(\Theta) u_{xt}\big)_x + \big( \hg(\Theta) u_x\big)_x,
	\qquad & x\in\Om, \ t>0, \\[1mm]
	\Theta_t = D \Theta_{xx} + \Gamma(\Theta) u_{xt}^2,
	\qquad & x\in\Om, \ t>0, \\[1mm]
	u_x=0, \quad \Theta_x=0,
	\qquad & x\in\pO, \ t>0, \\[1mm]
	u(x,0)=u_0(x), \quad u_t(x,0)=u_{0t}(x), \quad u_{tt}(x,0)=u_{0tt}(x), \quad \Theta(x,0)=\Theta_0(x),
	\qquad & x\in\Om,
	\ear
\ee
in an open bounded interval $\Om\subset\R$,
where $D>0$ and $\al\ge 0$ are constant parameters, where $\gamma$, $\hg$ and $\Gamma$ are given functions on $[0,\infty)$,
and where $u_0, u_{0t}, u_{0tt}$ and $\Theta_0$ are prescribed initial distributions of the displacement variable $u=u(x,t)$,
its derivatives $u_t$ and $u_{tt}$, and the temperature field $\Theta=\Theta(x,t)$.
In physical systems, excitation terms in $u$ may be realised by electric fields in piezoelectric media, making
the considerations in this manuscript suitable for the analysis of thermo-piezoelectric phenomena.\abs
Especially in the presence of functions $\gamma,\hg$ and $\Gamma$ which are allowed to reflect experimentally
observed temperature dependencies of elastic parameters (\cite{friesen}) in the mathematically extreme sense of
being unbounded, we do not expect (\ref{0}) to possess global solutions;
for a rigorous detection of finite-time blow-up, driven by suitably rapid growth of $\Gamma=\gamma=\hg$,
in a corresponding Kelvin-Voigt type simplification of (\ref{0}) in which
the third-order contribution $u_{ttt}$ is neglected, we refer to \cite{win_SIMA}.\abs
In view of this caveat, the following main result of this manuscript appears essentially optimal in this regard,
by namely asserting local existence and uniqueness of solutions, along with a suitable extensibility criterion.
In formulating this and throughout the sequel, given a bounded interval $\Om\subset\R$ and $p\in (1,\infty]$ we let
$W^{2,p}_N(\Om):=\{\vp\in W^{2,p}(\Om) \ | \ \frac{\pa\vp}{\pa\nu}=0 \mbox{ on } \pO\}$.
\begin{theo}\label{theo16}
  Let $\Om\subset\R$ be an open bounded interval, let $D>0$ and $\al\ge 0$, and suppose that
  \be{g}
	\lbal
	\gamma\in C^2([0,\infty))
	\mbox{ is such that $\gamma>0$ on } [0,\infty), \\[1mm]
	\hg \in C^2([0,\infty))
	\mbox{ is such that $\hg>0$ on } [0,\infty)
	\qquad \mbox{and} \\[1mm]
	\Gamma\in C^1([0,\infty))
	\mbox{ is such that $\Gamma\ge 0$ on } [0,\infty).
	\ear
  \ee
  Then whenever
  \be{init}
	\lbal
	u_0\in W^{2,2}_N(\Om), \\[1mm]
	u_{0t} \in W^{2,2}_N(\Om), \\[1mm]
	u_{0tt} \in W^{1,2}(\Om)
	\qquad \mbox{and} \\[1mm]
	\Theta_0\in W^{2,\infty}_N(\Om)
	\mbox{ is such that $\Theta_0\ge 0$ in $\Om$,}
	\ear
  \ee
  there exists $\tm\in (0,\infty]$ as well as a unique pair $(u,\Theta)$ of functions
  \be{16.01}
	\lbal
	u \in C^0([0,\tm);W^{2,2}_N(\Om))
	\qquad \mbox{and} \\[1mm]
	\Theta\in C^0([0,\tm);C^1(\bom)) \cap C^{2,1}(\bom\times (0,\tm)) \cap W^{1,\infty}_{loc}(\bom\times [0,\tm))
	\ear
  \ee
  which are such that
  \be{16.02}
	u_t\in C^0([0,\tm);C^1(\bom)) \cap L^\infty_{loc}([0,\tm);W^{2,2}_N(\Om))
  \ee
  and
  \be{16.03}
	u_{tt} \in C^0(\bom\times [0,\tm)) \cap L^\infty_{loc}((0,\tm);W^{1,2}(\Om)),
  \ee
  that $\Theta\ge 0$ in $\Om\times (0,\tm)$, that $(u,\Theta)$ forms a strong solution of (\ref{0}) in $\Om\times (0,\tm)$
  in the sense of Definition \ref{dw} below, and that
  \be{ext}
	\mbox{if $\tm<\infty$, \quad then \quad}
	\limsup_{t\nearrow \tm} \Big\{
	\|u_t(\cdot,t)\|_{W^{2,2}(\Om)}
	+ \|u_{tt}(\cdot,t)\|_{W^{1,2}(\Om)}
	+ \|\Theta(\cdot,t)\|_{W^{2,\infty}(\Om)} \Big\} = \infty.
  \ee
\end{theo}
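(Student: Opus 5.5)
We reformulate the third-order equation as a parabolic problem for $v:=u_t$ coupled to an ODE for $u$ and a heat equation for $\Theta$, and then argue by a fixed point on a short time interval. Writing $w:=u_{tt}+\al u_t$, the first equation in (\ref{0}) becomes
\[
	w_t=\big(\gamma(\Theta)u_{xt}\big)_x+\big(\hg(\Theta)u_x\big)_x .
\]
Alternatively, since $\gamma>0$, the equation can be viewed as a strongly damped wave equation for $v=u_t$:
\[
	v_{tt}+\al v_t=\big(\gamma(\Theta)v_x\big)_x+\big(\hg(\Theta)u_x\big)_x,
	\qquad u(\cdot,t)=u_0+\int_0^t v(\cdot,s)\,ds .
\]
Because the viscous operator $(\gamma(\Theta)v_x)_x$ is uniformly elliptic, this is a parabolic-hyperbolic system with a favourable structure. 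To avoid degeneracies we first regularize, for instance by adding $\eps v_{xxxx}$-type terms or by smoothing the data with parameter $\eps\in(0,1)$. This gives approximate problems for $(\ueps,\Teps)$ that are solvable by standard Galerkin or Amann theory on some maximal interval $(0,\tme)$.

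The core of the argument is a set of $\eps$-independent a priori estimates on a time interval $(0,\Ts)$ whose length depends only on the norms in (\ref{init}). We control a functional of the form
\[
	y_\eps(t):=\|\vepsx\|_{W^{1,2}}^2+\|\vepst\|_{W^{1,2}}^2+\|\uepsx\|_{W^{1,2}}^2+\|\Teps\|_{W^{2,\infty}},
\]
and aim for an inequality $y_\eps(t)\le C_1+C_2\int_0^t \phi(y_\eps(s))\,ds$ with some locally bounded $\phi$. This yields a bound on a short interval by an ODE comparison.

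The estimates are obtained in the following order.
\begin{itemize}
\item[(i)] Test the $v$-equation by $-v_{txx}$ (or differentiate in $x$ and test by $v_{xt}$). This produces the dissipated term $\int\gamma(\Theta)v_{xxt}^2$ and controls $v_{xt}$ together with $v_{xx}$. Commutator terms involving $\gamma'(\Theta)\Theta_x$ and $\gamma''(\Theta)\Theta_x^2$ are handled by the one-dimensional embedding $W^{1,2}\hra L^\infty$ together with the $W^{1,\infty}$ bound on $\Theta$; this is where $\gamma,\hg\in C^2$ enters.
\item[(ii)] The term $(\hg(\Theta)u_x)_x$ is controlled through $u_{xx}=u_{0xx}+\int_0^t v_{xx}$.
\item[(iii)] For $\Theta$, maximal Sobolev regularity or semigroup estimates for the Neumann heat semigroup apply to the source $\Gamma(\Theta)v_x^2$. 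This source lies in $W^{1,2}$ since $v_x\in W^{1,2}\subset L^\infty$. Together with $\Gamma\in C^1$, these estimates give $\Theta$ bounded in $W^{1,\infty}$ and $\Theta_t$ bounded. Parabolic Schauder theory then gives $\Theta\in C^{2,1}$ for positive times.
\item[(iv)] Nonnegativity of $\Theta$ follows from the comparison principle, since $\Gamma\ge0$.
\end{itemize}

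With these uniform bounds, Aubin--Lions compactness lets us pass to the limit $\eps\searrow0$ along a subsequence. This yields a strong solution in the sense of Definition \ref{dw} with the regularity (\ref{16.01})--(\ref{16.03}); the time continuity statements come from interpolation and the Lions--Magenes lemma. Uniqueness is proved by an energy estimate for the difference of two solutions at lower regularity: we test the difference of the $v$-equations by $v_1-v_2$ and the difference of the $\Theta$-equations by $\Theta_1-\Theta_2$, then apply Gronwall. The factors $\gamma(\Theta_1)-\gamma(\Theta_2)$ are absorbed using the $L^\infty$ bounds on $v_{ix}$ and $v_{ixx}\in L^2$.

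Finally, we define $\tm$ as the supremum of existence times for solutions satisfying these properties. The local existence time depends only on an upper bound for $\|u_t\|_{W^{2,2}}+\|u_{tt}\|_{W^{1,2}}+\|\Theta\|_{W^{2,\infty}}$ (note that $\|u\|_{W^{2,2}}$ grows at most linearly through $u_t$). Hence restarting the construction yields the extensibility criterion (\ref{ext}).

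The main obstacle is step (i) closing at the regularity level $u_{tt}\in W^{1,2}$. The top-order coupling term $(\gamma'(\Theta)\Theta_x v_x)_x$ requires $\Theta_{xx}\in L^\infty$, which explains why $\Theta_0\in W^{2,\infty}$ is assumed. Closing the estimate in turn needs a $W^{2,\infty}$ bound for $\Theta$ from the heat equation with a source that is only in $W^{1,2}$. To handle this, I would try semigroup estimates of the form $\|e^{t\Delta}f\|_{W^{2,\infty}}\le Ct^{-3/4}\|f\|_{W^{1,2}}$ in one dimension; the singularity $t^{-3/4}$ is integrable, so the bound should close.
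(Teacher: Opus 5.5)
There is a genuine gap in the central energy estimate: you have misread the type of the mechanical equation. In $v_{tt}+\alpha v_t=(\gamma(\Theta)v_x)_x+(\widehat{\gamma}(\Theta)u_x)_x$, the operator $(\gamma(\Theta)v_x)_x$ is not a viscous term for $v$. It is the elastic principal part of a quasilinear \emph{wave} equation for $v=u_t$. It acts as strong damping only in the second-order Kelvin--Voigt model $u_{tt}=(\gamma u_{xt})_x+\dots$, and the extra time derivative in the Moore--Gibson--Thompson equation removes exactly this. Testing by $-v_{txx}$ therefore gives
\[
	\frac12\frac{d}{dt}\Big\{\int_\Omega v_{xt}^2+\int_\Omega\gamma(\Theta)v_{xx}^2\Big\}+\alpha\int_\Omega v_{xt}^2
	=\frac12\int_\Omega\gamma'(\Theta)\Theta_tv_{xx}^2-\int_\Omega\gamma'(\Theta)\Theta_xv_xv_{xxt}-\int_\Omega\big(\widehat{\gamma}(\Theta)u_x\big)_xv_{xxt}.
\]
There is no term $\int_\Omega\gamma(\Theta)v_{xxt}^2$, and $v_{xxt}=u_{xxtt}$ is not controlled at the regularity level of the theorem. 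So nothing containing $v_{xxt}$ can be absorbed as you plan in (i)--(ii).

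The consequences are as follows.
\begin{itemize}
\item The commutator must be integrated by parts in $x$, which produces $\gamma'(\Theta)\Theta_{xx}v_xv_{xt}$. The first term on the right needs $\Theta_t\in L^\infty$. A $W^{1,\infty}$ bound on $\Theta$ is therefore not enough; your last paragraph half-notices this, which contradicts (i).
\item More importantly, $\int_\Omega\widehat{\gamma}(\Theta)u_{xx}v_{xxt}$ cannot be handled via $u_{xx}=u_{0xx}+\int_0^tv_{xx}$. The missing idea is to integrate this term by parts in \emph{time}, using $u_{xxt}=v_{xx}$. This forces the indefinite cross term $\int_\Omega\widehat{\gamma}(\Theta)u_{xx}v_{xx}$ into the energy.
\item Coercivity is then restored by adding $\frac B2\int_\Omega u_{xx}^2$ with $B$ large relative to $\sup\widehat{\gamma}^2/\inf\gamma$. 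Its derivative $B\int_\Omega u_{xx}v_{xx}$ is harmless.
\end{itemize}
This is the paper's functional (\ref{en}). It yields $y'\le C(y^2+y)$ only under the a priori hypothesis that $\Theta$ and $\Theta_t$ stay bounded, which via the heat equation also bounds $\Theta_{xx}$ and $\Theta_x^2$ (Lemma \ref{lem7}). That hypothesis must then be closed by a continuity argument.

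The same misconception affects your uniqueness proof and your approximation. With $d:=u_1-u_2$, the difference satisfies a wave-type equation in $d_t$, so testing by $d_t$ is not coercive, because $\int_\Omega d_{ttt}d_t=\frac{d}{dt}\int_\Omega d_{tt}d_t-\int_\Omega d_{tt}^2$. You have to test by $d_{tt}$, which is not an admissible test function for strong solutions. The paper therefore uses Steklov averages of $d_{tt}$ together with $u_{ttt}\in L^\infty_{loc}L^2$. It handles the terms $[\gamma(\Theta)-\gamma(\tilde\Theta)]\tilde u_{xt}d_{xtt}$ by spatial integration by parts, using $\tilde u_{xxt}\in L^2$ and the dissipation $D\int_\Omega\delta_x^2$ of the temperature difference (Lemma \ref{lem15}). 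Likewise, since the original system is not parabolic, you must specify a regularization that makes it parabolic while preserving the energy structure above. The paper adds $\eps\partial_{xx}$ to each equation of the first-order system for $(u_{tt},u_t,u)$. This costs an extra term $\eps\int_\Omega\widehat{\gamma}(\Theta)u_{xxx}^2$ in the functional and requires $\sqrt{\eps}\,u_{0\eps xxx}\to0$.

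Your treatment of $\Theta$ does work. In one dimension $\partial_xe^{t\Delta_N}f=e^{t\Delta_D}f_x$, so $\|e^{t\Delta}f\|_{W^{2,\infty}}\le Ct^{-3/4}\|f\|_{W^{1,2}}$ holds on bounded time intervals. Applied to $\Gamma(\Theta)v_x^2\in W^{1,2}$, it bounds $\Theta_{xx}$, and then $\Theta_t=D\Theta_{xx}+\Gamma(\Theta)v_x^2$, from a bound on $v$ in $L^\infty W^{2,2}$, without needing $v_{xt}\in L^2_{t,x}$. This is a legitimate alternative to the paper's route, which applies Duhamel to the equation for $\Theta_t$ with source $2\Gamma(\Theta)v_xv_{xt}\in L^2_{t,x}$ and then reads off $\Theta_{xx}$ from the heat equation (Lemma \ref{lem11}). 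Note only that your route leads to a Volterra inequality with kernel $(t-s)^{-3/4}$, which must be closed by a continuity argument rather than by ODE comparison.
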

{\bf Remark.} \quad
ii) \ In contrast to the existence result for the related Moore-Gibson-Thompson-Pennes system recently obtained in
\cite{benabbas_said_NONRWA}, Theorem \ref{theo16} does not impose any smallness assumption on the initial data. \abs
ii) \ The restriction to one-dimensional scenarios here is motivated by the ambition to address thermoviscoelastic evolution
in standard linear solids, which in contexts of Zener-type modelling leads to genuinely two-component systems of the form
(\ref{mgt_1d})-(\ref{thermals}) indeed only in one-dimensional cases in which displacements can be described by using scalar
variables $u$.
From a mathematical perspective, however, we remark here that higher-dimensional analogues of (\ref{0}) for scalar-valued
unknowns $u$ and $\Theta$ could well be covered by the analysis subsequently developed, with main parts of necessary
modifications reducing to an increase of regularity requirements concerning $u_0, u_{0t}$ and $u_{0tt}$ in (\ref{init}).
\mysection{Strong solutions. Uniqueness}
To begin with, let us specify the notion of solution that will be referred to below.
\begin{defi}\label{dw}
  Let $D>0$ and $\al\ge 0$,
  suppose that $\gamma\in C^0([0,\infty))$, $\hg\in C^0([0,\infty))$ and $\gamma\in C^0([0,\infty))$, and let
  $u_0\in W^{2,2}_N(\Om), u_{0t}\in W^{2,2}_N(\Om)$, $u_{0tt} \in W^{1,2}(\Om)$ and $0\le \Theta_0\in C^0(\bom)$.
  Then given $T\in (0,\infty]$, we will call a pair $(u,\Theta)$ of functions
  \be{w1}
	\lbal
	u \in C^0([0,T);C^1(\bom)) \cap L^\infty_{loc}([0,T);W^{2,2}_N(\Om))
	\qquad \mbox{and} \\[1mm]
	\Theta\in C^0(\bom\times [0,T)) \cap C^{2,1}(\bom\times (0,T))
		\cap W^{1,\infty}_{loc}(\bom\times [0,T))
	\ear
  \ee
  a {\em strong solution} of (\ref{0}) in $\Om\times (0,T)$ if
  \be{w2}
	\lbal
	u_t \in C^0([0,T);C^1(\bom)) \cap L^\infty_{loc}([0,T);W^{2,2}_N(\Om))
	\qquad \mbox{and} \\[1mm]
	u_{tt} \in C^0(\bom\times [0,T)) \cap L^\infty_{loc}([0,T);W^{1,2}(\Om)),
	\ear
  \ee
  if $\Theta\ge 0$ in $\Om\times (0,T)$, if
  \be{w3}
	u(\cdot,0)=u_0,
	\quad
	u_t(\cdot,0)=u_{0t}
	\quad \mbox{and} \quad
	\Theta(\cdot,0)=\Theta_0
	\qquad \mbox{in } \Om,
  \ee
  if
  \bea{wu}
	- \int_0^T \io u_{tt} \vp_t
	- \io u_{0tt} \vp(\cdot,0)
	+ \al \int_0^T \io u_{tt} \vp
	= - \int_0^T \io \gamma(\Theta) u_{xt} \vp_x
	- \int_0^T \io \hg(\Theta) u_x\vp_x
  \eea
  for each $\vp\in C_0^\infty(\bom\times [0,T))$, and if in the classical pointwise sense we have
  $\Theta_t=D\Theta_{xx} + \Gamma(\Theta) u_{xt}^2$ in $\Om\times (0,T)$ and $\Theta_x=0$ on $\pO\times (0,T)$.
\end{defi}
{\bf Remark.} \quad
  i) \ If in the above setting we choose $(\zeta_\eta)_{\eta\in (0,T)} \subset C_0^\infty([0,T))$ in such a way that
  $\zeta_\eta\equiv 1$ on $[0,\frac{\eta}{2}]$, $\zeta_\eta' \le 0$ and $\supp \zeta_\eta \subset [0,\eta]$ for all $\eta\in (0,T)$,
  then letting $\vp(x,t):=\zeta_\eta(t)$ for $x\in\bom, t\in [0,T)$ and $\eta\in (0,T)$ in (\ref{wu}) shows that
  since $\sup_{t\in (0,\eta)} \|u_{tt}(\cdot,t)-u_{tt}(\cdot,0)\|_{L^\infty(\Om)} \to 0$ as $\eta\searrow 0$ by (\ref{w2}),
  in addition to (\ref{w3}) we also have
  \bas
	u_{tt}(\cdot,0)=u_{0tt}.
  \eas
  ii) \ If beyond mere continuity it is assumed that $\gamma$ and $\hg$ even belong to $C^1([0,\infty))$, then for each
  strong solution $(u,\Theta)$ on $\Om\times (0,T)$, (\ref{wu})
  implies validity of the identity
  $u_{ttt} + \al u_{tt}= \gamma(\Theta) u_{xxt} + \gamma'(\Theta) \Theta_x u_{xt} + \hg(\Theta) u_{xx} + \hg'(\Theta) \Theta_x u_x$
  in the distributional sense on $\Om\times (0,T)$. The regularity properties in (\ref{w1}) and (\ref{w2}), and especially
  the requirement on boundedness of $\Theta_x$ contained in (\ref{w2}), thus ensure that in this case,
  \bas
	u_{ttt} \in L^\infty_{loc}([0,T);L^2(\Om)).
  \eas
\vs{2mm}

In our first step toward Theorem \ref{theo16}, by means of a suitably designed testing procedure we can make sure that
in any fixed time interval, at most one such solution can exist.
\begin{lem}\label{lem15}
  Let $D>0$ and $\al\ge 0$, and assume (\ref{g}) and (\ref{init}). Then for each $T>0$, there exists at most one strong solution
  of (\ref{0}) in $\Om\times (0,T)$ in the sense of Definition \ref{dw}.
\end{lem}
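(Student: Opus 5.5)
Suppose $(u,\Theta)$ and $(\wtu,\wtt)$ are two strong solutions in $\Om\times(0,T)$. Fix $T'\in(0,T)$. By (\ref{w1}) and (\ref{w2}) there is a constant $K$ bounding, on $\Om\times(0,T')$, all of
\[
|\Theta|,\ |\wtt|,\ |\Theta_x|,\ |\wtt_x|,\ |u_x|,\ |u_{xt}|,\ |\wtu_x|,\ |\wtu_{xt}|,\ \|u_{xx}\|_{L^2},\ \|u_{xxt}\|_{L^2},\ \|\wtu_{xx}\|_{L^2},\ \|\wtu_{xxt}\|_{L^2}.
\]
Hence $\gamma,\hg,\gamma',\hg',\Gamma,\Gamma'$ are Lipschitz and bounded on the relevant range, and $\gamma\ge\gamma_0>0$ there. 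Set $v:=u-\wtu$ and $\theta:=\Theta-\wtt$, which vanish at $t=0$ together with $v_t$ and $v_{tt}$ (Remark i)).

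The difference equation is, in the sense of (\ref{wu}),
\[
v_{ttt}+\al v_{tt}=\big(\gamma(\Theta)v_{xt}\big)_x+\big(\hg(\Theta)v_x\big)_x+\big([\gamma(\Theta)-\gamma(\wtt)]\wtu_{xt}\big)_x+\big([\hg(\Theta)-\hg(\wtt)]\wtu_x\big)_x .
\]
Remark ii) gives $v_{ttt}\in L^\infty_{loc}L^2$, so I can test with $v_{tt}$; this is justified by approximating $\vp$ in (\ref{wu}). Doing so yields
\[
\frac12\frac{d}{dt}\io v_{tt}^2+\al\io v_{tt}^2=-\io\gamma(\Theta)v_{xt}v_{xtt}-\io\hg(\Theta)v_xv_{xtt}-\io[\gamma(\Theta)-\gamma(\wtt)]\wtu_{xt}v_{xtt}-\io[\hg(\Theta)-\hg(\wtt)]\wtu_xv_{xtt}.
\]
The first term equals $-\frac12\frac{d}{dt}\io\gamma(\Theta)v_{xt}^2+\frac12\io\gamma'(\Theta)\Theta_tv_{xt}^2$. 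Here $\Theta_t$ is bounded because $\Theta\in W^{1,\infty}_{loc}$, so this produces a good energy term. The remaining terms contain $v_{xtt}$, which is not controlled. I therefore move the time derivative off $v_{xtt}$ by writing each such term as $-\frac{d}{dt}\io(\cdots)v_{xt}+\io(\cdots)_tv_{xt}$. For the $\hg$ term, $(\hg(\Theta)v_x)_t$ is bounded by $\|v_x\|_{L^2}+\|v_{xt}\|_{L^2}$. For the coupling terms,
\[
\big([\gamma(\Theta)-\gamma(\wtt)]\wtu_{xt}\big)_t=[\gamma'(\Theta)\Theta_t-\gamma'(\wtt)\wtt_t]\wtu_{xt}+[\gamma(\Theta)-\gamma(\wtt)]\wtu_{xtt}.
\]
This requires $\theta_t$ in $L^2$, and $\wtu_{xtt}\in L^\infty_tL^2$ multiplied by $\theta\in L^\infty$. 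The latter is fine via $\|\theta\|_{L^\infty}\le C\|\theta\|_{W^{1,2}}$.

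The functional I will use is
\[
y(t):=\io v_{tt}^2+\io\gamma(\Theta)v_{xt}^2+\io v_x^2+\io v_t^2+\io\theta^2+\io\theta_x^2 ,
\]
corrected by the boundary-in-time terms $\io(\cdots)v_{xt}$. These are absorbed by Young's inequality, since each is at most $\frac{\gamma_0}{4}\io v_{xt}^2+C\io(v_x^2+\theta^2)$, and $\io v_x^2$, $\io\theta^2$ are themselves controlled by integrating their time derivatives from zero. For the temperature, I test $\theta_t=D\theta_{xx}+\Gamma(\Theta)u_{xt}^2-\Gamma(\wtt)\wtu_{xt}^2$ with $\theta_t$ (equivalently with $-\theta_{xx}$). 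This gives
\[
\frac{D}{2}\frac{d}{dt}\io\theta_x^2+\io\theta_t^2\le C\io\theta^2+C\io v_{xt}^2
\]
by the $L^\infty$ bounds on $u_{xt}$ and $\wtu_{xt}$. The good term $\io\theta_t^2$ is exactly what absorbs the $\theta_t$ contribution from the mechanical part after Young's inequality. Collecting everything gives an integral inequality $y(t)\le C\int_0^ty$, and Gronwall's lemma yields $y\equiv0$ on $(0,T')$. Hence $v\equiv0$ and $\theta\equiv0$, and since $T'<T$ was arbitrary, uniqueness follows.

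The main obstacle is the $v_{xtt}$ term: the regularity class gives no control of $u_{xtt}$ beyond $L^\infty_tL^2$. The integration by parts in time must therefore be organized carefully. It needs $\Theta_t\in L^\infty$, which holds by the $W^{1,\infty}_{loc}$ requirement in (\ref{w1}), and it must produce $\theta_t$ only in a form that the parabolic estimate absorbs. If the time-derivative trick causes difficulty at the level of rigor, I would instead test with time-integrated (Steklov-averaged) versions of $v_{tt}$, which handles the justification.
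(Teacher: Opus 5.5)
Your argument is correct, but it handles the coupling terms by a different route from the paper. Both proofs test the difference equation with the second time derivative of $u-\wtu$. Both convert $-\io\gamma(\Theta)v_{xt}v_{xtt}$ into $-\frac12\frac{d}{dt}\io\gamma(\Theta)v_{xt}^2$ plus a term containing $\Theta_t$. Both also integrate $-\io\hg(\Theta)v_xv_{xtt}$ by parts in time and compensate the resulting $\io\hg(\Theta)v_xv_{xt}$ by a multiple of $\io v_x^2$.

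The difference lies in the terms $[\gamma(\Theta)-\gamma(\wtt)]\wtu_{xt}v_{xtt}$ and $[\hg(\Theta)-\hg(\wtt)]\wtu_xv_{xtt}$. The paper integrates these by parts in \emph{space} (cf.\ (\ref{15.15})--(\ref{15.16})); the boundary terms vanish since $\wtu_{xt}=\wtu_x=0$ on $\pO$. The derivative then falls on the coefficients, producing $\del$ and $\del_x$ together with $\Theta_x$, $\wtt_x$, $\wtu_{xxt}$, $\wtu_{xx}$, multiplied by quantities already in the energy. Consequently the temperature needs only the plain $L^2$ identity (\ref{15.28}) for $\del$. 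Its dissipation $D\io\del_x^2$ absorbs the $\del_x$-terms, and via $\|\del\|_{L^\infty(\Om)}^2\le c\io\del_x^2+c\io\del^2$ also the terms containing $\|\del\|_{L^\infty(\Om)}$; $\del_t$ never occurs.

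You integrate by parts in \emph{time}. This trades the spatial regularity of $\wtu_t$ and $\Theta$ for the temporal regularity $\wtu_{xtt}\in L^\infty_tL^2$ and $\Theta_t\in L^\infty$, both of which are available in Definition \ref{dw}. It also forces the second-level estimate obtained by testing with $\theta_t$, which you need in order to absorb $\int_0^t\io\theta_t^2$. That closes, but it costs two justifications that the paper's route avoids.
\begin{itemize}
\item Definition \ref{dw} only gives $\Theta\in C^{2,1}(\bom\times(0,T))$, with no $\theta_{xt}$. So the identity $\frac{d}{dt}\io\theta_x^2=-2\io\theta_{xx}\theta_t$ must be derived on intervals $[s,t]$ with $s>0$, for instance by the standard $L^2(W^{2,2})\cap W^{1,2}(L^2)$ lemma or by Steklov averages.
\item You need $\io\theta_x^2(\cdot,s)\to 0$ as $s\searrow 0$. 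This does not follow from $\theta\in C^0\cap W^{1,\infty}_{loc}$ alone, which only yields weak-$\star$ convergence of $\theta_x$. It can be obtained from $\io\theta_x^2=-\io\theta\theta_{xx}$, because $\theta_{xx}=\frac1D\{\theta_t-\Gamma(\Theta)u_{xt}^2+\Gamma(\wtt)\wtu_{xt}^2\}$ is bounded and $\theta(\cdot,s)\to0$ uniformly.
\end{itemize}
Finally, your three boundary-in-time corrections must together cost less than the $\frac{\gamma_0}{2}\io v_{xt}^2$ supplied by the energy, so $\frac{\gamma_0}{4}$ each is too generous. This is only a matter of choosing the Young constants.
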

\proof
  Assuming $(u,\Theta)$ and $(\wtu,\wtt)$ to be two strong solutions of (\ref{0}) in $\Om\times (0,T)$, we fix an arbitrary
  $T_0\in (0,T)$ and let
  \be{15.1}
	d(x,t):=\lball
	u(x,t)-\wtu(x,t),
	\qquad & x\in\bom, \ t\in [0,T_0), \\[1mm]
	0,
	\qquad & x\in\bom, \ t<0,
	\ear
  \ee
  as well as
  \be{15.2}
	\del(x,t):=\Theta(x,t)-\wtt(x,t),
	\qquad x\in\bom, \ t\in [0,T_0),
  \ee
  and note that according to Definition \ref{dw}, all the functions $d,d_t,d_{tt}$ and $\del$ are continuous on their respective
  domains of definition, and that not only, as a trivial consequence, $d_{tt}(x,0)=0$ for all $x\in\bom$, but that moreover also
  $\del(x,0)=0$ for all $x\in\Om$.
  In particular, two applications of (\ref{wu}) show that for each $\vp\in C_0^\infty(\bom\times [0,T_0))$,
  \bea{15.3}
	- \int_0^{T_0} \io d_{tt} \vp_t + \al \int_0^{T_0} \io d_{tt} \vp
	&=& - \int_0^{T_0} \io  \gamma(\Theta) u_{xt} \vp_x
	+ \int_0^{T_0} \io \gamma(\wtt) \wtu_{xt} \vp_x \nn\\
	& & - \int_0^{T_0} \io \hg(\Theta) u_x \vp_x
	+ \int_0^{T_0} \io \hg(\wtt) \wtu_x \vp_x \nn\\
	&=& - \int_0^{T_0} \io \gamma(\Theta) d_{xt} \vp_x
	- \int_0^{T_0} \io  [\gamma(\Theta)-\gamma(\wtt)] \wtu_{xt} \vp_x \nn\\
	& & - \int_0^{T_0} \io \hg(\Theta) d_x \vp_x
	- \int_0^{T_0} \io [\hg(\Theta)-\hg(\wtt)] \whu_x \vp_x.
  \eea
  To make appropriate use of this, we fix any $t_0\in (0,T_0)$, and for $\eta\in (0,T_0-t_0)$ we let
  $\zeta_\eta\in W^{1,\infty}(\R)$ be defined by
  \bas
	\zeta_\eta(t):=\lball
	1,
	\qquad & t\in (-\infty,t_0), \\[1mm]
	1-\frac{t-t_0}{\eta},
	\qquad & t\in [t_0,t_0+\eta], \\[1mm]
	0,
	\qquad & t>t_0+\eta.
	\ear
  \eas
  Then a straightforward approximation argument shows that (\ref{15.3}) actually extends so as to apply to
  \be{15.4}
	\vp(x,t) \equiv \vp_{\eta,h}(x,t)
	:=\zeta_\eta(t) \cdot \frac{1}{h} \int_{t-h}^t d_{tt}(x,s) ds,
	\qquad x\in\bom, \ t\in [0,T_0),
  \ee
  for any such $\eta$ and arbitrary $h\in (0,1)$, because $\vp_{\eta,h}\equiv 0$ in $\Om\times (t_0+\eta,T_0) \ne\emptyset$,
  and because (\ref{w2}) ensures that $d_{xt}$ and $d_x$ belong to $L^\infty(\Om\times (0,T_0))\subset L^1(\Om\times (0,T_0))$
  and $d_{xtt} \in L^\infty((0,T_0);L^2(\Om)) \subset L^1(\Om\times (0,T_0))$, so that
  $\vp_x\in L^1(\Om\times (0,T_0))$, and because furthermore
  \be{15.5}
	\vp_t(x,t)
	= \zeta_\eta(t)\cdot \frac{d_{tt}(x,t)-d_{tt}(x,t-h)}{h}
	+ \zeta_\eta'(t) \cdot \frac{1}{h} \int_{t-h}^t d_{tt}(x,s) ds
	\qquad \mbox{for all $x\in\Om$ and } t\in (0,T_0)
  \ee
  and hence, trivially, $\vp_t\in L^1(\Om\times (0,T_0))$.
  Since
  \bas
	\vp_x(x,t)
	= \zeta_\eta(t) \cdot \frac{d_{xt}(x,t)-d_{xt}(x,t-h)}{h}
	\qquad \mbox{for all $x\in\Om$ and } t\in (0,T_0),
  \eas
  from (\ref{15.3}) we thus obtain that
  \bea{15.6}
	& & \hs{-14mm}
	- \frac{1}{h} \int_0^{T_0} \io \zeta_\eta(t) d_{tt}^2(x,t) dxdt
	+ \frac{1}{h} \int_0^{T_0} \io \zeta_\eta(t) d_{tt}(x,t) d_{tt}(x,t-h) dxdt \nn\\
	& & - \int_0^{T_0} \io \zeta_\eta'(t) d_{tt}(x,t) \cdot \bigg\{ \frac{1}{h} \int_{t-h}^t d_{tt}(x,s) ds\bigg\} dxdt \nn\\
	& & + \al \int_0^{T_0} \io \zeta_\eta(t) d_{tt}(x,t) \cdot \bigg\{ \frac{1}{h} \int_{t-h}^t d_{tt}(x,s) ds\bigg\} dxdt \nn\\
	&=& - \frac{1}{h} \int_0^{T_0} \io \zeta_\eta(t) \gamma(\Theta(x,t)) d_{xt}^2(x,t) dxdt
	+ \frac{1}{h} \int_0^{T_0} \io \zeta_\eta(t) \gamma(\Theta(x,t)) d_{xt}(x,t) d_{xt}(x,t-h) dxdt \nn\\
	& & - \int_0^{T_0} \io \zeta_\eta(t) \cdot [\gamma(\Theta(x,t))-\gamma(\wtt(x,t))] \wtu_{xt}(x,t) dxdt \cdot
		\bigg\{ \frac{1}{h} \int_{t-h}^t d_{xtt}(x,s) ds\bigg\} dxdt \nn\\
	& & - \int_0^{T_0} \io \zeta_\eta(t) \hg(\Theta(x,t)) d_x(x,t) \cdot
		\bigg\{ \frac{1}{h} \int_{t-h}^t d_{xtt}(x,s) ds\bigg\} dxdt \nn\\
	& & - \int_0^{T_0} \io \zeta_\eta(t) \cdot [\hg(\Theta(x,t))-\hg(\wtt(x,t))] \wtu_x(x,t) \cdot
		\bigg\{ \frac{1}{h} \int_{t-h}^t d_{xtt}(x,s) ds \bigg\} dxdt
  \eea
  for all $\eta\in (0,T_0-t_0)$ and $h\in (0,1)$.
  Here, the continuity of $d_{tt}$ ensures that
  \bas
	\sup_{(x,t)\in\Om\times (0,T_0)} \bigg| \frac{1}{h} \int_{t-h}^t d_{tt}(x,s) ds - d_{tt}(x,t)\bigg| \to 0
	\qquad \mbox{as } h\searrow 0,
  \eas
  while in view of the inclusion $d_{xtt}\in L^2(\Om\times (-1,T_0))$, as implied by (\ref{w2}), a well-known approximation feature
  of Steklov averages (\cite{dibenedetto}) asserts that
  \bas
	\int_0^{t_0+\eta} \io \bigg| \frac{1}{h} \int_{t-h}^t d_{xtt}(x,s)ds - d_{xtt}(x,t) \bigg|^2 dxdt \to 0
	\qquad \mbox{as } h\searrow 0.
  \eas
  In (\ref{15.6}), we thus have
  \bea{15.7}
	& & \hs{-20mm}
	- \int_0^{T_0} \io \zeta_\eta'(t) d_{tt}(x,t) \cdot \bigg\{ \frac{1}{h} \int_{t-h}^t d_{tt}(x,s) ds \bigg\} dxdt
	+ \al \int_0^{T_0} \io \zeta_\eta(t) d_{tt}(x,t) \cdot \bigg\{ \frac{1}{h} \int_{t-h}^t d_{tt}(x,s) ds\bigg\} dxdt \nn\\
	&\to&
	- \int_0^{T_0} \io \zeta_\eta' d_{tt}^2
	+ \al \int_0^{T_0} \io \zeta_\eta d_{tt}^2
	\qquad \mbox{as } h\searrow 0
  \eea
  and
  \bea{15.8}
	& & \hs{-20mm}
	- \int_0^{T_0} \io \zeta_\eta(t) \cdot [\gamma(\Theta(x,t))-\gamma(\wtt(x,t))] \wtu_{xt}(x,t) dxdt \cdot
		\bigg\{ \frac{1}{h} \int_{t-h}^t d_{xtt}(x,s) ds\bigg\} dxdt \nn\\
	& & - \int_0^{T_0} \io \zeta_\eta(t) \hg(\Theta(x,t)) d_x(x,t) \cdot
		\bigg\{ \frac{1}{h} \int_{t-h}^t d_{xtt}(x,s) ds\bigg\} dxdt  \nn\\
	& & - \int_0^{T_0} \io \zeta_\eta(t) \cdot [\hg(\Theta(x,t))-\hg(\wtt(x,t))] \wtu_x(x,t) \cdot
		\bigg\{ \frac{1}{h} \int_{t-h}^t d_{xtt}(x,s) ds \bigg\} dxdt \nn\\
	&\to&
	- \int_0^{T_0} \io \zeta_\eta \cdot [\gamma(\Theta)-\gamma(\wtt)] \wtu_{xt} d_{xtt}
	- \int_0^{T_0} \io \zeta_\eta \hg(\Theta) d_x d_{xtt} \nn\\
	& & - \int_0^{T_0} \io \zeta_\eta \cdot [\hg(\Theta)-\hg(\wtt)] \wtu_x d_{xtt}
	\qquad \mbox{as } h\searrow 0.
  \eea
  In treating the first two summands in (\ref{15.6}), for $x\in\Om$, $t\in (0,T_0)$ and $h\in (0,1)$ we rewrite
  \bas
	- \frac{1}{h} d_{tt}^2(x,t) + \frac{1}{h} d_{tt}(x,t) d_{tt}(x,t-h)
	= -\frac{1}{2h} d_{tt}^2(x,t) + \frac{1}{2h} d_{tt}^2(x,t-h)
	- \frac{1}{2h} \cdot \big\{ d_{tt}(x,t)-d_{tt}(x,t-h)\big\}^2
  \eas
  to see by means of a substitution that
  \bea{15.9}
	& & \hs{-20mm}
	- \frac{1}{h} \int_0^{T_0} \io \zeta_\eta(t) d_{tt}^2(x,t) dxdt
	+ \frac{1}{h} \int_0^{T_0} \io \zeta_\eta(t) d_{tt}(x,t) d_{tt}(x,t-h) dxdt \nn\\
	&=& - \frac{1}{2h} \int_0^{T_0} \io \zeta_\eta(t) d_{tt}^2(x,t) dxdt
	+ \frac{1}{2h} \int_0^{T_0} \io \zeta_\eta(t) d_{tt}^2(x,t-h) dxdt \nn\\
	& & - \frac{1}{2h} \int_0^{T_0} \io \zeta_\eta(t) \cdot \big\{ d_{tt}(x,t)-d_{tt}(x,t-h)\big\}^2 dxdt \nn\\
	&=& - \frac{1}{2h} \int_{T_0-h}^{T_0} \io \zeta_\eta(t) d_{tt}^2(x,t) dxdt
	+ \frac{1}{2} \int_0^{T_0-h} \io \frac{\zeta_\eta(t+h)-\zeta_\eta(t)}{h} d_{tt}^2(x,t) dxdt \nn\\
	& & - \frac{1}{2h} \int_0^{T_0} \io \zeta_\eta(t) \cdot \big\{ d_{tt}(x,t)-d_{tt}(x,t-h)\big\}^2 dxdt
  \eea
  for all $\eta\in (0,T_0-t_0)$ and $h\in (0,1)$.
  Here, again by continuity of $d_{tt}$,
  \be{15.10}
	- \frac{1}{2h} \int_{T_0-h}^{T_0} \io \zeta_\eta(t) d_{tt}^2(x,t) dxdt
	\to - \frac{1}{2} \io \zeta_\eta(T_0) d_{tt}^2(x,T_0) dx = 0
	\qquad \mbox{as } h\searrow 0
  \ee
  and
  \bea{15.11}
	\frac{1}{2} \int_0^{T_0-h} \io \frac{\zeta_\eta(t+h)-\zeta_\eta(t)}{h} d_{tt}^2(x,t) dxdt
	\to \frac{1}{2} \int_0^{T_0} \io \zeta_\eta' d_{tt}^2
	\qquad \mbox{as } h\searrow 0
  \eea
  due to the fact that $\frac{\zeta_\eta(\cdot,+h)-\zeta_\eta}{h} \wsto \zeta_\eta'$ in $L^\infty((0,T_0))$ as $h\searrow 0$.
  Furthermore, using that $d_{ttt}\in L^\infty_{loc}([0,T);L^2(\Om))$
  according to the remark following Definition \ref{dw},
  by means of the Cauchy-Schwarz inequality we obtain that
  \bea{15.12}
	& & \hs{-30mm}
	\bigg| - \frac{1}{2h} \int_0^{T_0} \io \zeta_\eta(t) \cdot \big\{ d_{tt}(x,t)-d_{tt}(x,t-h)\big\}^2 dxdt  \bigg| \nn\\
	&=& \frac{1}{2h} \int_0^{T_0} \io \zeta_\eta(t) \cdot \bigg\{ \int_{t-h}^t d_{ttt}(x,s) ds\bigg\}^2 dxdt \nn\\
	&\le& \frac{1}{2} \int_0^{T_0} \io \zeta_\eta(t) \cdot \bigg\{ d_{ttt}^2(x,s) ds \bigg\} dxdt \nn\\
	&\le& \frac{T_0 h}{2} \cdot {\rm ess} \!\!\! \sup_{\hs{-3mm} t\in (0,t_0+\eta)} \io d_{ttt}^2(d,t) dx \nn\\[2mm]
	&\to& 0
	\qquad \mbox{as } h\searrow 0.
  \eea
  Finally, Young's inequality guarantees that on the right-hand side of (\ref{15.6}) we have
  \bea{15.13}
	& & \hs{-20mm}
	- \frac{1}{h} \int_0^{T_0} \io \zeta_\eta(t) \gamma(\Theta(x,t)) d_{xt}^2(x,t) dxdt
	+ \frac{1}{h} \int_0^{T_0} \io \zeta_\eta(t) \gamma(\Theta(x,t)) d_{xt}(x,t) d_{xt}(x,t-h) dxdt \nn\\
	&\le& - \frac{1}{2h} \int_0^{T_0} \io \zeta_\eta(t) \gamma(\Theta(x,t)) d_{xt}^2(x,t) dxdt
	+ \frac{1}{2h} \int_0^{T_0} \io \zeta_\eta(t) \gamma(\Theta(x,t)) d_{xt}^2(x,t-h) dxdt \nn\\
	&=& - \frac{1}{2h} \int_{T_0-h}^{T_0} \io \zeta_\eta(t) \gamma(\Theta(x,t)) d_{xt}^2(x,t) dxdt \nn\\
	& & + \frac{1}{2} \int_0^{T_0-h} \io \frac{\zeta_\eta(t+h)\gamma(\Theta(x,t+h))-\zeta_\eta(t)\gamma(\Theta(x,t))}{h} \cdot
		d_{xt}^2(x,t) dxdt \nn\\
	&\to& - \frac{1}{2} \io \zeta_\eta(T_0) \gamma(\Theta(x,T_0)) d_{xt}^2(x,T_0) dxdt
	+ \frac{1}{2} \int_0^{T_0} \io \pa_t \big\{ \zeta_\eta \gamma(\Theta)\big\} \cdot d_{xt}^2 \nn\\
	&=& \frac{1}{2} \int_0^{T_0} \io \zeta_\eta' \gamma(\Theta) d_{xt}^2
	+ \frac{1}{2} \int_0^{T_0} \io \zeta_\eta \gamma'(\Theta) \Theta_t d_{xt}^2
	\qquad \mbox{as } h\searrow 0.
  \eea
  Collecting (\ref{15.9})-(\ref{15.12}), (\ref{15.7}), (\ref{15.13}) and (\ref{15.8}), in the limit $h\searrow 0$ we thus infer from
  (\ref{15.6}) and our assumption that $\al\ge 0$ that for each $\eta\in (0,T_0-t_0)$,
  \bea{15.14}
	& & \hs{-30mm}
	- \frac{1}{2} \int_0^{T_0} \io \zeta_\eta' d_{tt}^2
	- \frac{1}{2} \int_0^{T_0} \io \zeta_\eta' \gamma(\Theta) d_{xt}^2 \nn\\
	&\le& \frac{1}{2} \int_0^{T_0} \io \zeta_\eta \gamma'(\Theta) \Theta_t d_{xt}^2
	- \int_0^{T_0} \io \zeta_\eta \cdot [\gamma(\Theta)-\gamma(\wtt)] \wtu_{xt} d_{xtt} \nn\\
	& & - \int_0^{T_0} \io \zeta_\eta \hg(\Theta) d_x d_{xtt}
	- \int_0^{T_0} \io \zeta_\eta \cdot [\hg(\Theta)-\hg(\wtt)] \wtu_x d_{xtt},
  \eea
  where we once more use that according to Definition \ref{dw}, $d_{tt}$ lies in $C^0(\bom\times [0,T_0])$ with
  $d_{xtt} \in L^2(\Om\times (0,T_0))$.
  Therefore, namely, two integrations by parts with respect to the space variable show that
  \bea{15.15}
	\int_0^{T_0} \io \zeta_\eta \cdot [\gamma(\Theta)-\gamma(\wtt)] \wtu_{xt} d_{xtt}
	&=& \int_0^{T_0} \io \zeta_\eta \cdot [\gamma'(\Theta) \Theta_x - \gamma'(\wtt)\wtt_x] \wtu_{xt} d_{xt} \nn\\
	& & + \int_0^{T_0} \io \zeta_\eta \cdot [\gamma(\Theta)-\gamma(\wtt)] \wtu_{xxt} d_{xt}
  \eea
  and, similarly,
  \bea{15.16}
	- \int_0^{T_0} \io \zeta_\eta \cdot [\hg(\Theta)-\hg(\wtt)] \wtu_x d_{xtt}
	&=& \int_0^{T_0} \io \zeta_\eta \cdot [\hg'(\Theta)\Theta_x - \hg'(\wtt)\wtt_x] \wtu_x d_{xt} \nn\\
	& & + \int_0^{T_0} \io \zeta_\eta \cdot [\hg(\Theta)-\hg(\wtt)] \wtu_{xx} d_{xt},
  \eea
  while furthermore using that the continuous function $\zeta_\eta \hg(\Theta) d_x d_{xt}$ vanishes on $\bom$ at $t=0$
  and for all $t\in (t_0+\eta,T_0)$ we may integrate by parts in time to see that
  \bea{15.17}
	- \int_0^{T_0} \io \zeta_\eta \hg(\Theta) d_x d_{xt}
	&=& \int_0^{T_0} \io \zeta_\eta \hg(\Theta) d_{xt}^2
	+ \int_0^{T_0} \io \zeta_\eta \hg'(\Theta) \Theta_t d_x d_{xt} \nn\\
	& & + \int_0^{T_0} \io \zeta_\eta' \hg(\Theta) d_x d_{xt}.
  \eea
  To suitably estimate the right-hand sides in (\ref{15.15})-(\ref{15.17}), we rely on the fact that $T_0<T$ in choosing
  $c_i=c_i(T_0)>0$, $i\in\{1,2,...,15\}$, such that in line with Definition \ref{dw} we have
  \be{15.18}
	\hs{-5mm}
	\|\wtu_{xt}\|_{L^\infty(\Om)} \le c_1,
	\
	\|\wtu_{xxt}\|_{L^\infty(\Om)} \le c_2,
	\
	\|\wtu_x\|_{L^\infty(\Om)} \le c_3
	\ \mbox{and} \
	\|\wtu_{xx}\|_{L^2(\Om)} \le c_4
	\quad \mbox{for a.e.~} t\in (0,T_0)
  \ee
  as well as
  \be{15.19}
	\|\Theta\|_{L^\infty(\Om)}
	+ \|\wtt\|_{L^\infty(\Om)} \le c_5,
	\quad
	\|\Theta_x\|_{L^\infty(\Om)} \le c_6,
	\quad
	\|\wtt_x\|_{L^\infty(\Om)} \le c_7
	\quad \mbox{and} \quad
	\|\Theta_t\|_{L^\infty(\Om)} \le c_8
  \ee
  for all $t\in (0,T_0)$,
  and that
  \bea{15.20}
	c_9 \le \gamma(\xi)\le c_{10}
	\quad \mbox{and} \quad
	\hg(\xi) \le c_{11}
	\qquad \mbox{for all } \xi\in [0,c_5]
  \eea
  as well as
  \bea{15.21}
	|\gamma'(\xi)| \le c_{12},
	\quad
	|\gamma''(\xi)| \le c_{13},
	\quad
	|\hg'(\xi)| \le c_{14}
	\quad \mbox{and} \quad
	|\hg''(\xi)| \le c_{15}
	\qquad \mbox{for all } \xi\in [0,c_5].
  \eea
  We furthermore note that a combination of the mean value theorem with (\ref{15.19}) and (\ref{15.21}) shows that
  \bas
	\big| \gamma(\Theta) - \gamma(\wtt)\big| \le c_{12} |\Theta-\wtt| = c_{12} |\del|
	\qquad \mbox{in } \Om\times (0,T_0)
  \eas
  and
  \bas
	\big|\hg(\Theta)-\hg(\wtt)\big| \le c_{14} |\del|
	\qquad \mbox{in } \Om\times (0,T_0),
  \eas
  and that
  \bas
	\big|\gamma'(\Theta)\Theta_x - \gamma'(\wtt)\wtt_x\big|
	&\le& \big| \gamma'(\Theta)-\gamma'(\wtt)\big| \cdot |\Theta_x|
	+ \big|\gamma'(\wtt)\big| \cdot |\Theta_x-\wtt_x| \nn\\
	&\le& c_{13} c_6 |\del|
	+ c_{12} |\del_x|
	\qquad \mbox{in } \Om\times (0,T_0)
  \eas
  as well as
  \bas
	\big| \hg'(\Theta) \Theta_x - \hg'(\wtt)\wtt_x\big|
	&\le& \big| \hg'(\Theta) - \hg'(\wtt)\big| \cdot |\Theta_x|
	+ \big| \hg'(\wtt)\big| \cdot |\Theta_x - \wtt_x| \nn\\
	&\le& c_{15} c_6 |\del| + c_{14} |\del_x|
	\qquad \mbox{in } \Om\times (0,T_0).
  \eas
  From (\ref{15.15}), (\ref{15.16}) (\ref{15.18}) and the fact that $\supp \zeta_\eta \subset [0,t_0+\eta] \subset [0,T_0]$
  for all $\eta\in (0,T_0-t_0)$, we accordingly obtain that due to H\"older's and Young's inequalities,
  \bea{15.22}
	\bigg| - \int_0^{T_0} \io \zeta_\eta \cdot [\gamma(\Theta)-\gamma(\wtt)] \wtu_{xt} d_{xtt} \bigg|
	&\le& \int_0^{t_0+\eta} \io |\gamma'(\Theta)\Theta_x - \gamma'(\wtt)\wtt_x| \cdot |\wtu_{xt}| \cdot |d_{xt}| \nn\\
	& & + \int_0^{t_0+\eta} \io |\gamma(\Theta)-\gamma(\wtt)| \cdot |\wtu_{xxt}| \cdot |d_{xt}| \nn\\
	&\le& c_{13} c_6 c_1 \int_0^{t_0+\eta} \io |\del| \cdot |d_{xt}|
	+ c_{12} c_1 \int_0^{t_0+\eta} \io |\del_x| \cdot |d_{xt}| \nn\\
	& & + c_{12} \int_0^{t_0+\eta} \|\del(\cdot,t)\|_{L^\infty(\Om)} \|\wtu_{xxt}(\cdot,t)\|_{L^2(\Om)}
		\|d_{xt}(\cdot,t)\|_{L^2(\Om)} dt \nn\\
	&\le& 2 \int_0^{t_0+\eta} \io d_{xt}^2
	+ \frac{c_{13}^2 c_6^2 c_1^2}{4} \int_0^{t_0+\eta} \io \del^2
	+ \frac{c_{12}^2 c_1^2}{4} \int_0^{t_0+\eta} \io d_x^2 \nn\\
	& & + c_{12} c_2 \int_0^{t_0+\eta} \|\del(\cdot,t)\|_{L^\infty(\Om)} \|d_{xt}(\cdot,t)\|_{L^2(\Om)} dt \nn\\
	&\le& 3 \int_0^{t_0+\eta} \io d_{xt}^2
	+ \frac{c_{13}^2 c_6^2 c_1^2}{4} \int_0^{t_0+\eta} \io \del^2
	+ \frac{c_{12}^2 c_1^2}{4} \int_0^{t_0+\eta} \io d_x^2 \nn\\
	& & + \frac{c_{12}^2 c_2^2}{4} \int_0^{t_0+\eta} \|\del(\cdot,t)\|_{L^\infty(\Om)}^2 dt,
  \eea
  and that, similarly,
  \bea{15.23}
	\bigg| - \int_0^{T_0} \io \zeta_\eta \cdot [\hg(\Theta)-\hg(\wtt)] \wtu_x d_{xtt} \bigg|
	&\le&c_{15} c_6 c_3 \int_0^{t_0+\eta} \io |\del|\cdot |d_{xt}|
	+ c_{14} c_3 \int_0^{t_0+\eta} \io |\del_x| \cdot |d_{xt}| \nn\\
	& & + c_{14} \int_0^{t_0+\eta} \|\del(\cdot,t)\|_{L^\infty(\Om)} \|\wtu_{xx}(\cdot,t)\|_{L^2(\Om)}
		\|d_{xt}(\cdot,t)\|_{L^2(\Om)} dt \nn\\
	&\le& 3\int_0^{t_0+\eta} \io d_{xt}^2
	+ \frac{c_{15}^2 c_6^2 c_3^2}{4} \int_0^{t_0+\eta} \io \del^2 \nn\\
	& & + \frac{c_{14}^2 c_3^2}{4} \int_0^{t_0+\eta} \io \del_x^2
	+ \frac{c_{14}^2 c_4^2}{4} \int_0^{t_0+\eta} \|\del(\cdot,t)\|_{L^\infty(\Om)}^2 dt,
  \eea
  while Young's inequality together with (\ref{15.20}) and (\ref{15.19}) implies that
  \bea{15.24}
	& & \hs{-30mm}
	- \int_0^{T_0} \io \zeta_\eta \hg(\Theta) d_x d_{xtt}
	- \int_0^{T_0} \io \zeta_\eta' \hg(\Theta) d_x d_{xt} \nn\\
	&\le& c_{11} \int_0^{t_0+\eta} \io d_{xt}^2
	+ c_{14} c_8 \int_0^{t_0+\eta} \io |d_x| \cdot |d_{xt}| \nn\\
	&\le& (c_{11}+1) \int_0^{t_0+\eta} \io d_{xt}^2
	+ \frac{c_{14}^2 c_8^2}{4} \int_0^{t_0+\eta} \io d_x^2.
  \eea
  Taking $c_{16}>0$ such that in line with the continuity of the embedding $W^{1,2}(\Om)\hra L^\infty(\Om)$ we have
  \bas
	\|\vp\|_{L^\infty(\Om)}^2 \le c_{16} \io \vp_x^2 + c_{16} \io \vp^2
	\qquad \mbox{for all } \vp\in W^{1,2}(\Om),
  \eas
  from (\ref{15.22})-(\ref{15.24}) we thus infer that (\ref{15.14}) implies the inequality
  \bea{15.25}
	& & \hs{-20mm}
	- \frac{1}{2} \int_0^{T_0} \io \zeta_\eta' d_{tt}^2
	- \frac{1}{2} \int_0^{T_0} \io \zeta_\eta' \gamma(\Theta) d_{xt}^2
	- \int_0^{T_0} \io \zeta_\eta' \hg(\Theta) d_x d_{xt} \nn\\
	&\le& c_{17} \int_0^{t_0+\eta} \io d_{xt}^2
	+ c_{17} \int_0^{t_0+\eta} \io  d_x^2
	+ c_{17} \int_0^{t_0+\eta} \io \del_x^2
	+ c_{17} \int_0^{t_0+\eta} \io \del^2
  \eea
  with
  \bas
	c_{17}:=\max \Big\{ c_{11}+7 \, , \,
		\frac{c_{13}^2 c_6^2 c_1^2}{4}+\frac{c_{12}^2 c_2^2 c_{16}}{4}+\frac{c_{15}^2 c_6^2 c_3^2}{4}
		+ \frac{c_{14}^2 c_4^2 c_{16}}{4} \, , \,
	\frac{c_{12}^2 c_1^2}{4} + \frac{c_{12}^2 c_2^2 c_{16}}{4} + \frac{c_{14}^2 c_3^2}{4}
		+ \frac{c_{14}^2 c_4^2 c_{16}}{4}
	\Big\}.
  \eas
  Now by continuity of $d_{tt},\Theta,d_{xt}$ and $d_x$, on the left-hand side of (\ref{15.25}) it follows from our definition
  of $(\zeta_\eta)_{\eta\in (0,T_0-t_0)}$ that
  \bas
	- \frac{1}{2} \int_0^{T_0} \io \zeta_\eta' d_{tt}^2
	= \frac{1}{2\eta} \int_{t_0}^{t_0+\eta} \io d_{tt}^2
	\to \frac{1}{2} \io d_{tt}^2(\cdot,t_0)
	\qquad \mbox{as } \eta\searrow 0,
  \eas
  and that similarly, as $\eta\searrow 0$ we have
  \bas
	- \frac{1}{2} \int_0^{T_0} \io \zeta_\eta' \gamma(\Theta) d_{xt}^2
	\to \frac{1}{2} \io \gamma\big(\Theta(\cdot,t_0)\big) d_{xt}^2(\cdot,t_0)
  \eas
  and
  \bas
	- \int_0^{T_0} \io \zeta_\eta' \hg(\Theta) d_x d_{xt}
	\to \io \hg\big(\Theta(\cdot,t_0)\big) d_x(\cdot,t_0) d_{xt}(\cdot,t_0),
  \eas
  whence from (\ref{15.25}) we obtain on letting $\eta\searrow 0$ that
  \bea{15.26}
	& & \hs{-30mm}
	\frac{1}{2} \io d_{tt}^2(\cdot,t_0)
	+ \frac{1}{2} \io \gamma\big(\Theta(\cdot,t_0)\big) d_{xt}^2(\cdot,t_0)
	+ \io \hg\big(\Theta(\cdot,t_0)\big) d_x(\cdot,t_0) d_{xt}(\cdot,t_0) \nn\\
	&\le& c_{17} \int_0^{t_0} \io d_{xt}^2
	+ c_{17} \int_0^{t_0} \io  d_x^2
	+ c_{17} \int_0^{t_0} \io \del_x^2
	+ c_{17} \int_0^{t_0} \io \del^2
  \eea
  for all $t_0\in (0,T_0)$.
  Here, the integrals on the right-hand side can adequately be compensated on the basis of the simple observation that in line with
  the continuity of $d_x$ and $d_{xt}$, $(0,T_0) \ni t \mapsto \io d_x^2$ is continuously differentiable with
  \be{15.27}
	\frac{1}{2} \frac{d}{dt} \io d_x^2
	= \io d_x d_{xt}
	\le \io d_{xt}^2 + \frac{1}{4} \io d_x^2
	\qquad \mbox{for all } t\in (0,T_0),
  \ee
  and that since
  \bas
	\del_t
	&=& D\del_{xx} + \Gamma(\Theta) u_{xt}^2 - \Gamma(\wtt) \wtu_{xt}^2 \\
	&=& D\del_{xx} + [\Gamma(\Theta)-\Gamma(\wtt)] \wtu_{xt}^2
	+ \Gamma(\Theta) (u_{xt} + \wtu_{xt}) d_{xt}
  \eas
  holds in the classical sense in $\Om\times (0,T_0)$, we have
  \be{15.28}
	\frac{1}{2} \frac{d}{dt} \io \del^2
	+ D \io \del_x^2
	= \io [\Gamma(\Theta)-\Gamma(\wtt)] \wtu_{xt}^2 \cdot \del
	+ \io \Gamma(\Theta) (u_{xt}+\wtu_{xt}) \cdot d_{xt} \del
	\qquad \mbox{for all } t\in (0,T_0).
  \ee
  If, beyond the choices in (\ref{15.18})-(\ref{15.21}), we let $c_{18}>0, c_{19}>0$ and $c_{20}>0$ be such that
  \bas
	\|u_{xt}(\cdot,t)\|_{L^\infty(\Om)} \le c_{18}
	\qquad \mbox{as well as} \qquad
	|\Gamma(\xi)| \le c_{19}
	\quad \mbox{and} \quad
	|\Gamma'(\xi)| \le c_{20}
	\quad \mbox{for all } \xi\in [0,c_5],
  \eas
  then combining (\ref{15.28}) with (\ref{15.18}) and (\ref{15.19}) we see that due to Young's inequality,
  \bas
	\frac{1}{2} \frac{d}{dt} \io \del^2
	+ D \io \del_x^2
	&\le& c_{20} c_1^2 \io \del^2
	+ c_{19} (c_{18}+c_1) \io |d_{xt}| \cdot |\del| \nn\\
	&\le& \io d_{xt}^2
	+ c_{21} \io \del^2
	\qquad \mbox{for all } t\in (0,T_0)
  \eas
  with $c_{21}:=c_{20} c_1^2 + \frac{c_{19}^2 (c_{18}+c_1)^2}{4}$.
  On integrating this and (\ref{15.27}) in time, we thus infer that
  \bas
	y(t):=\frac{1}{2} \io d_{tt}^2
	+ \frac{1}{2} \io \gamma(\Theta) d_{xt}^2
	+ \io \hg(\Theta) d_x d_{xt}
	+ \frac{B_1}{2} \io d_x^2
	+ \frac{B_2}{2} \io \del^2,
	\qquad t\in [0,T_0],
  \eas
  with
  \be{15.288}
	B_1:=\frac{4 c_{11}^2}{c_9}
	\qquad \mbox{and} \qquad
	B_2:=\frac{c_{17}}{D},
  \ee
  satisfies
  \bas
	y(t_0)
	+ B_2 D \int_0^{t_0} \io \del_x^2
	&\le& c_{17} \int_0^{t_0} \io d_{xt}^2
	+ c_{17} \int_0^{t_0} \io d_x^2
	+ c_{17} \int_0^{t_0} \io \del_x^2
	+ c_{17} \int_0^{t_0} \io \del^2 \nn\\
	& & + B_1 \int_0^{t_0} \io d_{xt}^2
	+ \frac{B_1}{4} \int_0^{t_0} \io d_x^2
	+ B_2 \int_0^{t_0} \io d_{xt}^2
	+ B_2 c_{21} \int_0^{t_0} \io \del^2
  \eas
  for all $t\in (0,T_0)$ and hence, due to the second definition in (\ref{15.288}),
  \be{15.29}
	y(t_0)
	\le c_{22} \int_0^{t_0} \io d_{xt}^2
	+ c_{22} \int_0^{t_0} \io d_x^2
	+ c_{22} \int_0^{t_0} \io \del^2
	\qquad \mbox{for all } t\in (0,T_0)
  \ee
  with $c_{22}:=\max\{ c_{17} + B_1+B_2 \, , \, c_{17} + B_2 c_{21}\}$.
  As, on the other hand, the first selection made in (\ref{15.288}) together with Young's inequality and (\ref{15.20}) ensures that
  \bea{15.30}
	y(t)
	&\ge& \frac{1}{2} \io d_{tt}^2
	+ \frac{c_9}{2} \io d_{xt}^2
	- c_{11} \io |d_x| \cdot |d_{xt}|
	+ \frac{B_1}{2} \io d_x^2
	+ \frac{B_2}{2} \io \del^2 \nn\\
	&\ge& \frac{1}{2} \io d_{tt}^2
	+ \frac{c_9}{4} \io d_{xt}^2
	- \frac{c_{11}^2}{c_9} \io d_x^2
	+ \frac{B_1}{2} \io d_x^2
	+ \frac{B_2}{2} \io \del^2 \nn\\
	&=& \frac{1}{2} \io d_{tt}^2
	+ \frac{c_9}{4} \io d_{xt}^2
	+ \frac{B_1}{4} \io d_x^2
	+ \frac{B_2}{2} \io \del^2
	\qquad \mbox{for all } t\in (0,T_0),
  \eea
  from (\ref{15.29}) we particularly obtain that
  \bas
	y(t_0) \le c_{23} \int_0^{t_0} y(t) dt
	\qquad \mbox{for all } t\in (0,T_0),
  \eas
  where $c_{23}:=c_{22} \cdot \max\{ \frac{4}{c_9} \, , \, \frac{4}{B_1} \, , \, \frac{2}{B_2}\}$.
  By continuity and nonnegativity of $y$ on $[0,T_0]$, the latter especially being entailed by (\ref{15.30}),
  through a Gr\"onwall lemma this implies that $y(t_0)=0$ for all $t_0\in (0,T_0)$, whence again relying on (\ref{15.30})
  we may conclude that $d_{tt}\equiv 0$ and $\del\equiv 0$ in $\Om\times (0,T_0)$.
  Since $d(x,0)=d_t(x,0)=0$ for all $x\in\Om$ by Definition \ref{dw}, and since $T_0\in (0,T)$ was arbitrary, this
  means that, indeed, $u-\wtu\equiv 0$ and $\Theta-\wtt\equiv 0$ in $\Om\times (0,T)$.
\qed
\mysection{Existence and extensibility for initial data with $\io u_0 = \io u_{0t}= \io u_{0tt}=0$}
Next concerned with our claim concerning the existence of solutions,
in order to address this in a conveniently simple setting essentially equivalent to that covered by Theorem \ref{theo16}
we shall assume throughout this section that the open bounded interval $\Om$ is fixed, that $D>0$ and $\al\ge 0$,
that $\gamma, \hg$ and $\Gamma$ comply with (\ref{g}), and that the initial data $u_0, u_{0t}, u_{0tt}$ and $\Theta_0$
are such that beyond (\ref{init}) we additionally have
\be{mass0}
	\io u_0 = \io u_{0t} = \io u_{0tt} =0.
\ee
\subsection{Approximate problems and their local solvability}
In order to suitably approximate (\ref{0}), by using standard smoothing procedures we can fix families
$(\gaeps)_{\eps\in (0,1)} \subset C^\infty([0,\infty))$,
$(\hgeps)_{\eps\in (0,1)} \subset C^\infty([0,\infty))$ and
$(\Gaeps)_{\eps\in (0,1)} \subset C^\infty([0,\infty))$
as well as
$(u_{0\eps})_{\eps\in (0,1)} \subset C^\infty(\bom)$,
$(v_{0\eps})_{\eps\in (0,1)} \subset C^\infty(\bom)$,
$(w_{0\eps})_{\eps\in (0,1)} \subset C^\infty(\bom)$,
$(\Theta_{0\eps})_{\eps\in (0,1)} \subset C^\infty(\bom)$
in such a way that
\be{gaeps}
	\gaeps>0, \quad \hgeps>0 \quad \mbox{and} \quad \Gaeps\ge 0
	\quad \mbox{on $[0,\infty)$ \qquad for all } \eps\in (0,1)
\ee
and
\be{gaepsc}
	\gaeps \to \gamma
	\mbox{ in } C^2_{loc}([0,\infty)),
	\quad
	\hgeps \to \hg
	\mbox{ in } C^2_{loc}([0,\infty))
	\quad \mbox{and} \quad
	\Gaeps \to \Gamma
	\mbox{ in } C^1_{loc}([0,\infty))
	\qquad \mbox{as } \eps\searrow 0,
\ee
that $u_{0\eps x}$, $v_{0\eps x}$, $w_{0\eps x}$ and $\Theta_{0\eps x}$ have compact support in $\Om$ with
\be{mass0e}
	\io u_{0\eps} = \io v_{0\eps} = \io w_{0\eps} =0
\ee
and $\Theta_{0\eps} \ge 0$ in $\Om$
for all $\eps\in (0,1)$,
and that as $\eps\searrow 0$ we have
\be{iec}
	\lbal
	u_{0\eps} \to u_0
	\quad \mbox{ in } W^{2,2}(\Om),
	\\[1mm]
	v_{0\eps} \to u_{0t}
	\quad \mbox{ in } W^{2,2}(\Om),
	\\[1mm]
	w_{0\eps} \to u_{0tt}
	\quad \mbox{ in } W^{1,2}(\Om)
	\qquad \mbox{and}
	\\[1mm]
	\Theta_{0\eps} \wsto \Theta_0
	\quad \mbox{ in } W^{2,\infty}(\Om)
	\ear
\ee
as well as
\be{iec2}
	\sqrt{\eps} u_{0\eps xxx} \to 0
	\quad \mbox{in } L^2(\Om).
\ee
For each $\eps\in (0,1)$, the regularization of (\ref{0}) given by
\be{0eps}
	\lball
	\wepst = \eps \wepsxx + \big(\gaeps(\Teps) \vepsx\big)_x + \big(\hgeps(\Teps)\uepsx\big)_x - \al\weps,
	\qquad & x\in\Om, \ t>0, \\[1mm]
	\vepst = \eps \vepsxx + \weps,
	\qquad & x\in\Om, \ t>0, \\[1mm]
	\uepst = \eps \uepsxx + \veps,
	\qquad & x\in\Om, \ t>0, \\[1mm]
	\Tepst = D \Tepsxx + \Gaeps(\Teps) \vepsx^2,
	\qquad & x\in\Om, \ t>0, \\[1mm]
	\wepsx=\vepsx=\uepsx=\Tepsx=0,
	\qquad & x\in\pO, \ t>0, \\[1mm]
	\weps(x,0)=w_{0\eps}(x), \ \veps(x,0)=v_{0\eps}(x), \ \ueps(x,0)=u_{0\eps}(x), \ \Teps(x,0)=\Theta_{0\eps}(x),
	\quad & x\in\Om,
	\ear
\ee
then becomes accessible to standard arguments from local existence and extensibility in parabolic systems of cross-diffusion type,
accordingly leading to the following basic statement in this regard.
\begin{lem}\label{lem1}
  Let $\eps\in (0,1)$. Then there exist $\tme\in (0,\infty]$ and
  \be{reg}
	\lbal
	\weps \in C^{2,1}(\bom\times [0,\tme)) \cap C^\infty(\bom\times (0,\tme)), \\[1mm]
	\veps \in C^{2,1}(\bom\times [0,\tme)) \cap C^\infty(\bom\times (0,\tme)), \\[1mm]
	\ueps \in C^{2,1}(\bom\times [0,\tme)) \cap C^\infty(\bom\times (0,\tme))
	\qquad \mbox{and} \\[1mm]
	\Teps \in C^{2,1}(\bom\times [0,\tme)) \cap C^\infty(\bom\times (0,\tme))
	\ear
  \ee
  such that $\Teps\ge 0$ in $\Om\times (0,\tme)$, that $(\weps,\veps,\ueps,\Teps)$ solves
  (\ref{0eps}) classically in $\Om\times (0,\tme)$, and that
  \bea{ext_eps}
	& & \hs{-15mm}
	\mbox{if $\tme<\infty$, \quad then \quad} \nn\\
	& & \hs{-4mm}
	\limsup_{t\nearrow\tme} \Big\{
	\|\weps(\cdot,t)\|_{L^\infty(\Om)}
	+ \|\veps(\cdot,t)\|_{W^{1,2}(\Om)}
	+ \|\ueps(\cdot,t)\|_{W^{1,2}(\Om)}
	+ \|\Teps(\cdot,t)\|_{L^\infty(\Om)}
	\Big\}
	= \infty.
  \eea
  Moreover,
  \be{mass}
	\io \weps(\cdot,t) = \io \veps(\cdot,t) = \io \ueps(\cdot,t) =0
	\qquad \mbox{for all } t\in (0,\tme).
  \ee
\end{lem}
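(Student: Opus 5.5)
We view (\ref{0eps}) as a quasilinear parabolic system for $U_\eps:=(\weps,\veps,\ueps,\Teps)$ and invoke Amann's theory of local existence and extensibility for normally elliptic systems. The principal part is upper triangular:
\[
\mathcal{A}(U)U = -\Big(\eps \weps_{xx} + \big(\gaeps(\Teps)\vepsx\big)_x + \big(\hgeps(\Teps)\uepsx\big)_x,\ \eps\vepsxx,\ \eps\uepsxx,\ D\Tepsxx\Big).
\]
Its diffusion matrix has diagonal entries $\eps,\eps,\eps,D$, with off-diagonal terms only in the first row. All eigenvalues are therefore positive, and the system is normally elliptic with homogeneous Neumann conditions. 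The functions $\gaeps,\hgeps,\Gaeps$ are smooth, and the initial data are smooth with compactly supported derivatives, so they satisfy all compatibility conditions. Since $\Teps$ may momentarily leave $[0,\infty)$ in a fixed-point scheme, we first extend $\gaeps,\hgeps,\Gaeps$ smoothly to $\R$, keeping $\gaeps,\hgeps>0$.

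Amann's results then give a maximal classical solution in $W^{1,p}$-based spaces, $p>1$. Standard parabolic Schauder bootstrapping, differentiating the equations and using smoothness of the coefficients, yields the regularity (\ref{reg}), including $C^\infty$ for $t>0$ and $C^{2,1}$ up to $t=0$. The last point uses the compatibility of the data.

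Nonnegativity of $\Teps$ follows from the comparison principle. The source $\Gaeps(\Teps)\vepsx^2$ is nonnegative wherever $\Teps\ge0$, provided the extension is taken with $\Gaeps(\xi)\ge0$ for all $\xi$. Then $\Tepst - D\Tepsxx\ge 0$ with $\Theta_{0\eps}\ge0$, so $\Teps\ge0$, and the extensions are never actually used.

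For (\ref{mass}), integrate each of the first three equations over $\Om$ and use the Neumann conditions. This gives $\frac{d}{dt}\io\weps=-\al\io\weps$, $\frac{d}{dt}\io\veps=\io\weps$ and $\frac{d}{dt}\io\ueps=\io\veps$. Together with (\ref{mass0e}), these ODEs yield zero successively.

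The main obstacle is the reduced extensibility criterion (\ref{ext_eps}). Amann's criterion only says the solution can be continued as long as $U_\eps$ stays bounded in $W^{1,p}$ and Hölder continuous in time. So we assume the quantity in (\ref{ext_eps}) stays bounded on $(0,\tme)$ with $\tme<\infty$ and upgrade the bounds as follows.

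First, the $W^{1,2}$ bounds for $\veps,\ueps$ give bounds on $\vepsx$ in $L^2$. Then $\Tepst=D\Tepsxx+\Gaeps(\Teps)\vepsx^2$ has a source bounded in $L^\infty((0,\tme);L^1)$, since $\Teps$ is bounded in $L^\infty$. Semigroup smoothing in one dimension then gives $\Teps$ bounded in $W^{1,q}$ for $q<\infty$ and Hölder continuous.

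Next, the $\ueps$ and $\veps$ equations are heat equations with sources $\veps$ and $\weps$. Since $\weps\in L^\infty$, this gives $\veps$ bounded in $W^{1,q}$ for all finite $q$, and then $\ueps$ bounded in $W^{2,q}$-type spaces.

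Then the $\weps$ equation is a heat equation with divergence-form forcing $(\gaeps(\Teps)\vepsx+\hgeps(\Teps)\uepsx)_x$, whose flux is bounded in $L^q$. Maximal regularity or semigroup estimates for $e^{t\eps\Delta}\partial_x$ give $\weps$ bounded in $W^{1,q}$ with Hölder bounds. Feeding this back improves $\veps$ and $\Teps$ to $W^{1,\infty}$ and $C^{1+\theta}$.

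Iterating these steps a finite number of times produces bounds in $C^{1+\theta,\theta/2}$, contradicting Amann's criterion. The delicate bookkeeping lies in this iteration, especially the coupling through $\vepsx^2$ in the $\Teps$ equation. Since we are in one dimension, we expect each step to close by Sobolev embedding.
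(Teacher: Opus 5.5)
Your route is genuinely different from the paper's. The paper uses no quasilinear black box. It runs a Banach fixed-point argument for the Duhamel formulation directly in $X_0=C^0(\bom)\times W^{1,2}(\Om)\times W^{1,2}(\Om)\times C^0(\bom)$, using only the smoothing estimates $\|e^{\eps t\Del}\pa_x\vp\|_{L^\infty(\Om)}\le ct^{-3/4}\|\vp\|_{L^2(\Om)}$, $\|e^{\eps t\Del}\vp\|_{W^{1,2}(\Om)}\le ct^{-1/2}\|\vp\|_{L^\infty(\Om)}$ and $\|e^{Dt\Del}\vp\|_{L^\infty(\Om)}\le ct^{-1/2}\|\vp\|_{L^1(\Om)}$. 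The existence time depends only on the $X_0$-norm of the data, so (\ref{ext_eps}) follows at once by prolongation. Smoothness, $\Teps\ge 0$ and (\ref{mass}) are then added much as you do.

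So the paper gets the weak extensibility criterion for free by choosing the space in which the contraction closes. You instead obtain existence and smoothness cheaply from Amann, but must pay with a uniform-in-time bootstrap near $\tme$. That bootstrap does close in one dimension, though not exactly as you wrote it. A flux merely bounded in $L^q$ does not give $\weps$ bounded in $W^{1,q}$ uniformly in time: $\pa_x e^{\eps\tau\Del}\pa_x$ has $L^q$ operator norm of order $\tau^{-1}$, which is not integrable, and maximal regularity only yields $L^p$-in-time control.

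What you need is fractional spatial regularity of $F:=\gaeps(\Teps)\vepsx+\hgeps(\Teps)\uepsx$, which you get by reordering the steps:
\begin{itemize}
\item $\weps\in L^\infty$ yields $\veps$ bounded in $W^{1+\theta,q}(\Om)$ for all $\theta<1$ and $q<\infty$, so $\vepsx$ is bounded and H\"older continuous.
\item Then $\Gaeps(\Teps)\vepsx^2$ is bounded, so $\Teps$ is bounded in $C^{1+\theta}(\bom)$.
\item Hence $F$ is bounded in $W^{\theta,q}(\Om)$, and the singularity improves to $\tau^{-1+\theta/2}$.
\end{itemize}
Then $\weps$ is bounded in $W^{1,q}(\Om)$ after a single pass.

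A minor point: a smooth extension of $\Gaeps$ that is nonnegative on all of $\R$ need not exist. For example, if $\Gaeps(\xi)=\xi$ near $0$, any $C^1$ extension is negative just left of $0$. You do not need such an extension, since $\Gaeps(0)\ge 0$ already makes $0$ a subsolution, which is exactly what the paper uses.
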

\proof
  We fix $\eps\in (0,1)$, and first pick any $\gamma_0 \in C^\infty(\R), \wt{\gamma}_0\in C^\infty(\R)$ and $\Gamma_0\in C^\infty(\R)$
  such that $\gamma_0|_{[0,\infty)} = \gaeps, \wt{\gamma}_0|_{[0,\infty)}=\hgeps$ and $\Gamma_0|_{[0,\infty)}=\Gaeps$.
  Next, according to known smoothing properties of the Neumann heat semigroup $(e^{t\Del})_{t\ge 0}$ on $\Om$ (\cite{win_JDE2010})
  we can find positive constants $c_1, c_2$ and $c_3$ such that for each $t\in (0,1)$,
  \be{l1}
	\|e^{\eps t\Del} \vp\|_{W^{1,2}(\Om)} \le c_1 t^{-\frac{1}{2}} \|\vp\|_{L^\infty(\Om)}
	\qquad \mbox{for all } \vp \in C^0(\bom)
  \ee
  and
  \be{l2}
	\|e^{d t\Del}\vp\|_{L^\infty(\Om)} \le c_2 t^{-\frac{1}{2}} \|\vp\|_{L^1(\Om)}
	\qquad \mbox{for all } \vp \in L^1(\Om),
  \ee
  and that
  \be{l3}
	\|e^{\eps t\Del} \pa_x \vp\|_{L^\infty(\Om)}
	\le c_3 t^{-\frac{3}{4}} \|\vp\|_{L^2(\Om)}
	\qquad \mbox{for all } \vp \in C_0^\infty(\Om),
  \ee
  meaning that $e^{\eps t\Del} \pa_x$ can be extended to a continuous linear operator on all of $L^2(\Om)$, with norm controlled
  according to (\ref{l3}).
  Here and throughout this proof, for definiteness we choose the norm in $W^{1,2}(\Om)$ to be given by
  $\|\vp\|_{W^{1,2}(\Om)} := \big\{ \|\vp\|_{L^2(\Om)}^2 + \|\vp_x\|_{L^2(\Om)}^2 \big\}^\frac{1}{2}$ for $\vp\in W^{1,2}(\Om)$,
  and we note that then two basic testing procedures and a comparison argument show that for any $t>0$,
  \be{l4}
	\|e^{t\Del}\vp\|_{W^{1,2}(\Om)} \le \|\vp\|_{W^{1,2}(\Om)}
	\qquad \mbox{for all } \vp \in W^{1,2}(\Om)
  \ee
  and
  \be{l5}
	\|e^{t\Del}\vp\|_{L^\infty(\Om)} \le \|\vp\|_{L^\infty(\Om)}
	\qquad \mbox{for all } \vp\in C^0(\bom).
  \ee
  Given $\eps\in (0,1)$, we now let
  $X_0:=C^0(\bom) \times W^{1,2}(\Om) \times W^{1,2}(\Om) \times C^0(\bom)$
  be equipped with the norm defined by
  \bas
	\big\| (\vp_1,\vp_2,\vp_3,\vp_4)\big\|_{X_0} := \max \Big\{
		\|\vp_1\|_{L^\infty(\Om)} \, , \, \|\vp_2\|_{W^{1,2}(\Om)} \, , \, \|\vp_3\|_{W^{1,2}(\Om)} \, , \,
		\|\vp_4\|_{L^\infty(\Om)} \Big\}
  \eas
  for $(\vp_1,\vp_2,\vp_3,\vp_4) \in X_0$, and abbreviating
  \be{l55}
	R:=\big\| (w_{0\eps},v_{0\eps},u_{0\eps},\Theta_{0\eps})\big\|_{X_0} +1
  \ee
  we fix $T_0=T_0(R)\in (0,1)$ small enough fulfilling
  \be{l6}
	4c_3 \cdot \big\{ \|\gamma_0\|_{L^\infty([-R,R])} + \|\wt{\gamma}_0\|_{L^\infty([-R,R])}\big\} \cdot R T_0^\frac{1}{4}
	+ \al R T \le 1
  \ee
  and
  \be{l7}
	2c_1 R T_0^\frac{1}{2} \le 1
  \ee
  and
  \be{l8}
	RT_0 \le 1
  \ee
  as well as
  \be{l9}
	2c_2 \|\Gamma_0\|_{L^\infty([-R,R])} R^2 T_0^\frac{1}{2} \le 1.
  \ee
  Then for $T\in (0,T_0]$, in the Banach space $X:=C^0([0,T];X_0)$, with norm
  given by $\|\vp\|_X := \sup_{t\in [0,T]} \|\vp(\cdot,t)\|_{X_0}$ for $\vp\in X$,
  we consider the closed set $S:=\{\vp\in X \ | \ \|\vp\|_X \le R\}$ and introduce a mapping
  $\Phi=(\Phi_1,\Phi_2,\Phi_3,\Phi_4): S\to X$ by defining
  \bas
	\big[ \Phi_1(w,v,u,\Theta)\big](\cdot,t)
	:= e^{\eps t\Del} w_{0\eps}
	+ \int_0^t e^{\eps(t-s)\Del} \pa_x \big\{ \gamma_0(\Theta) v_x + \wt{\gamma}_0(\Theta) u_x \big\} ds
	- \al \int_0^t e^{\eps(t-s)\Del} w ds
  \eas
  and
  \bas
	\big[ \Phi_2(w,v,u,\Theta)\big](\cdot,t)
	:= e^{\eps t\Del} v_{0\eps}
	+ \int_0^t e^{\eps(t-s)\Del} w ds
  \eas
  and
  \bas
	\big[ \Phi_3(w,v,u,\Theta)\big](\cdot,t)
	:= e^{\eps t\Del} v_{0\eps}
	+ \int_0^t e^{\eps(t-s)\Del} v ds
  \eas
  as well as
  \bas
	\big[ \Phi_4(w,v,u,\Theta)\big](\cdot,t)
	:= e^{D t\Del} \Theta_{0\eps}
	+ \int_0^t e^{D(t-s)\Del} \big\{ \Gamma_0(\Theta) v_x^2 \big\} ds
  \eas
  for $(w,v,u,\Theta) \in S$ and $t\in [0,T]$.
  Then by (\ref{l5}), (\ref{l3}) and (\ref{l6}),
  \bas
	& & \hs{-16mm}
	\big\| \big[ \Phi_1(w,v,u,\Theta)\big](\cdot,t) \big\|_{L^\infty(\Om)} \\
	&\le& \|w_{0\eps}\|_{L^\infty(\Om)}
	+ c_3 \int_0^t (t-s)^{-\frac{3}{4}} \cdot \big\{ \|\gamma_0(\Theta) v_x\|_{L^2(\Om)} + \|\wt{\gamma}_0(\Theta) u_x\|_{L^2(\Om)}
		\big\} ds
	+ \al \int_0^t \|w\|_{L^\infty(\Om)} ds \\
	&\le& \|w_{0\eps}\|_{L^\infty(\Om)}
	+ c_3 \cdot \big\{ \|\gamma_0\|_{L^\infty([-R,R])} + \|\wt{\gamma}_0\|_{L^\infty([-R,R])} \big\} \cdot R
		\int_0^t (t-s)^{-\frac{3}{4}} ds
	+ \al R T \\
	&\le& \|w_{0\eps}\|_{L^\infty(\Om)}
	+ 4 c_3 \cdot \big\{ \|\gamma_0\|_{L^\infty([-R,R])} + \|\wt{\gamma}_0\|_{L^\infty([-R,R])} \big\} \cdot R T^\frac{1}{4}
	+ \al R T \\
	&\le& R
	\qquad \mbox{for all } t\in [0,T],
  \eas
  because $T\le T_0 \le 1$, and because $\|w_{0\eps}\|_{L^\infty(\Om)} \le R-1$ according to (\ref{l55}).
  Similarly, (\ref{l1}) and (\ref{l4}) together with (\ref{l7}), (\ref{l8}) and (\ref{l55}) ensure that
  \bas
	\big\| \big[ \Phi_2(w,v,u,\Theta)\big](\cdot,t) \big\|_{W^{1,2}(\Om)}
	&\le& \|v_{0\eps}\|_{L^\infty(\Om)}
	+ c_1 \int_0^t (t-s)^{-\frac{1}{2}} \|w\|_{L^\infty(\Om)} ds \\
	&\le& \|v_{0\eps}\|_{L^\infty(\Om)}
	+ c_1 R \int_0^t (t-s)^{-\frac{1}{2}} ds \\
	&\le& \|v_{0\eps}\|_{L^\infty(\Om)}
	+ 2 c_1 R T^\frac{1}{2} \\
	&\le& R
	\qquad \mbox{for all } t\in [0,T],
  \eas
  and that
  \bas
	\big\| \big[ \Phi_3(w,v,u,\Theta)\big](\cdot,t) \big\|_{W^{1,2}(\Om)}
	&\le& \|u_{0\eps}\|_{L^\infty(\Om)}
	+ \int_0^t (t-s)^{-\frac{1}{2}} \|v\|_{W^{1,2}(\Om)} ds \\
	&\le& \|u_{0\eps}\|_{L^\infty(\Om)}
	+ RT \\
	&\le& R
	\qquad \mbox{for all } t\in [0,T],
  \eas
  while (\ref{l5}) and (\ref{l2}) in conjunction with (\ref{l9}) and (\ref{l55}) imply that
  \bas
	\big\| \big[ \Phi_4(w,v,u,\Theta)\big](\cdot,t) \big\|_{L^\infty(\Om)}
	&\le& \|\Theta_{0\eps}\|_{L^\infty(\Om)}
	+ c_2 \int_0^t (t-s)^{-\frac{1}{2}} \|\Gamma_0(\Theta) v_x^2 \|_{L^1(\Om)} ds \\
	&\le& \|\Theta_{0\eps}\|_{L^\infty(\Om)}
	+ c_2 \|\Gamma_0\|_{L^\infty([-R,R])} R^2 \int_0^t (t-s)^{-\frac{1}{2}} ds \\
	&\le& \|\Theta_{0\eps}\|_{L^\infty(\Om)}
	+ 2 c_2 \|\Gamma_0\|_{L^\infty([-R,R])} R^2 T^\frac{1}{2} \nn\\
	&\le& R
	\qquad \mbox{for all } t\in [0,T].
  \eas
  Therefore, $\Phi$ maps $S$ into itself for each $T\in (0,T_0]$, and a slight modification of this reasoning shows that if we choose
  $T=T(R)\in (0,T_0]$ suitably small, the $\Phi$ moreover acts as a contraction on $S$.
  Fixing any such $T$ now, by means of the Banach fixed point theorem we obtain $(\weps,\veps,\ueps,\Teps)\in S$
  such that $\Phi(\weps,\veps,\ueps,\Teps)=(\weps,\veps,\ueps,\Teps)$, and standard arguments from parabolic theory
  (\cite{LSU}) reveal that these functions all belong to $C^{2,1}(\bom\times [0,T]) \cap C^\infty(\bom\times (0,T])$, and that
  the problem
  \bas
	\lball
	\wepst = \eps \wepsxx + \big(\gamma_0(\Teps) \vepsx\big)_x + \big(\wh{\gamma}_0(\Teps)\uepsx\big)_x - \al\weps,
	\qquad & x\in\Om, \ t\in (0,T), \\[1mm]
	\vepst = \eps \vepsxx + \weps,
	\qquad & x\in\Om, \ t\in (0,T), \\[1mm]
	\uepst = \eps \uepsxx + \veps,
	\qquad & x\in\Om, \ t\in (0,T), \\[1mm]
	\Tepst = D \Tepsxx + \Gamma_0(\Teps) \vepsx^2,
	\qquad & x\in\Om, \ t\in (0,T), \\[1mm]
	\wepsx=\vepsx=\uepsx=\Tepsx=0,
	\qquad & x\in\pO, \ t\in (0,T), \\[1mm]
	\weps(x,0)=w_{0\eps}(x), \ \veps(x,0)=v_{0\eps}(x), \ \ueps(x,0)=u_{0\eps}(x), \ \Teps(x,0)=\Theta_{0\eps}(x),
	\quad & x\in\Om,
	\ear
  \eas
  is solved in the classical sense.
  But since $\Gamma_0(0)=\Gaeps(0)\ge 0$ and $\Theta_{0\eps} \ge 0$ in $\Om$, a comparison principle warrants that $\Teps\ge 0$
  in $\Om\times (0,T)$, and that thus we actually have $\gamma_0(\Teps)\equiv \gaeps(\Teps), \wh{\gamma}_0(\Teps)\equiv \hgeps(\Teps)$
  and $\Gamma_0(\Teps)\equiv \Gaeps(\Teps)$ in $\Om\times (0,T)$.
  Therefore, $(\weps,\veps,\ueps,\Teps)$ in fact forms a classical solution of (\ref{0eps}) in $\Om\times (0,T)$,
  so that since our choice of $T$ depends on the initial data only through the quantities
  $\|w_{0\eps}\|_{L^\infty(\Om)}, \|v_{0\eps}\|_{W^{1,2}(\Om)}, \|u_{0\eps}\|_{W^{1,2}(\Om)}$ and $\|\Theta_{0\eps}\|_{L^\infty(\Om)}$,
  a standard prolongation argument enables us to extend this quadruple to a classical solution of (\ref{0eps}) in
  $\Om\times (0,\tme)$ that satisfies (\ref{reg}) with some $\tme\in (0,\infty]$ fulfilling (\ref{ext_eps}).
  The conservations properties in (\ref{mass}), finally, result from an integration in (\ref{0eps}) using (\ref{mass0e}).
\qed
\subsection{Implications of presupposed $L^\infty$ bounds for $\Teps$ and $\Tepst$}
Now the core part of our construction of solutions will be
based on a rigorous counterpart of the formal observation that for some suitably chosen
$M>0$ one can find $B(M)>0$ and $C(M)>0$ such that the functional
\be{en}
	y(t) := \frac{1}{2} \io u_{xtt}^2
	+ \frac{1}{2} \io \gamma(\Theta) u_{xxt}^2
	+ \io \hg(\Theta) u_{xx} u_{xxt}
	+ \frac{B(M)}{2} \io u_{xx}^2
\ee
satisfies
\be{en2}
	y'(t) \le C(M)y^2(t) + C(M)y(t)
\ee
as long as
\be{en3}
	\|\Theta\|_{L^\infty(\Om)} + \|\Theta_t\|_{L^\infty(\Om)} \le M.
\ee
As a starting point of our considerations in this regard, let us record the following observation
and note that the constant $M$ appearing therein may well depend on the particular choice of the approximations
$\Gaeps, v_{0\eps}$ and $\Theta_{0\eps}$ chosen above, but that this will not affect our subsequent analysis.
\begin{lem}\label{lem01}
  There exists $M>0$ such that writing
  \be{1.2}
	\Theta_{0\eps t} := D\Theta_{0\eps xx} + \Gaeps(\Theta_{0\eps}) v_{0\eps x}^2,
	\qquad \eps\in (0,1),
  \ee
  we have
  \be{1.1}
	\|\Theta_{0\eps}\|_{L^\infty(\Om)} + \|\Theta_{0\eps t}\|_{L^\infty(\Om)} \le \frac{M}{8}
	\qquad \mbox{for all } \eps\in (0,1).
  \ee
\end{lem}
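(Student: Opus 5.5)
This is a uniform-boundedness statement, and it follows directly from the convergence properties (\ref{iec}) and (\ref{gaepsc}) of the approximating families. We bound each term in (\ref{1.1}) by a quantity that is uniform in $\eps\in(0,1)$, and then choose $M$ as eight times the sum of these bounds.

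\textbf{The term $\|\Theta_{0\eps}\|_{L^\infty(\Om)}$ and the first part of $\Theta_{0\eps t}$.} By (\ref{iec}), $\Theta_{0\eps}\wsto\Theta_0$ in $W^{2,\infty}(\Om)$ as $\eps\searrow0$. A weakly-$\star$ convergent family is bounded, since the uniform boundedness principle applies in the dual space $W^{2,\infty}(\Om)$ of a separable space. Strictly, this gives boundedness along sequences $\eps_j\searrow0$; for the family over all of $(0,1)$ we simply take the approximations to be constructed with $\sup_{\eps\in(0,1)}\|\Theta_{0\eps}\|_{W^{2,\infty}(\Om)}<\infty$, which the standard mollification procedure provides. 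Thus there is $c_1>0$ with
\[
\|\Theta_{0\eps}\|_{L^\infty(\Om)}+\|\Theta_{0\eps xx}\|_{L^\infty(\Om)}\le c_1
\qquad\mbox{for all }\eps\in(0,1),
\]
which controls both $\|\Theta_{0\eps}\|_{L^\infty(\Om)}$ and $\|D\Theta_{0\eps xx}\|_{L^\infty(\Om)}$.

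\textbf{The source term $\Gaeps(\Theta_{0\eps})v_{0\eps x}^2$.} Since $0\le\Theta_{0\eps}\le c_1$, it is enough to bound $\Gaeps$ on $[0,c_1]$. The local uniform convergence $\Gaeps\to\Gamma$ in (\ref{gaepsc}) yields $\sup_{\eps\in(0,1)}\|\Gaeps\|_{L^\infty([0,c_1])}\le c_2$, using the same caveat about the parameter range. For the other factor, $v_{0\eps}\to u_{0t}$ in $W^{2,2}(\Om)$ by (\ref{iec}), so $\|v_{0\eps x}\|_{W^{1,2}(\Om)}\le c_3$ for all $\eps$. The one-dimensional embedding $W^{1,2}(\Om)\hra L^\infty(\Om)$ then gives $\|v_{0\eps x}\|_{L^\infty(\Om)}\le c_4$. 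Hence
\[
\|\Gaeps(\Theta_{0\eps})v_{0\eps x}^2\|_{L^\infty(\Om)}\le c_2c_4^2 .
\]

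\textbf{Conclusion and main obstacle.} Setting $M:=8\big(c_1+Dc_1+c_2c_4^2\big)$ yields (\ref{1.1}). The only genuine subtlety is that the convergences as $\eps\searrow0$ give bounds for small $\eps$ only. Near $\eps=1$, and on compact subintervals of $(0,1)$, one either uses that the smoothing construction depends continuously on $\eps$ in the relevant norms, or simply strengthens the choice of approximations to be uniformly bounded on all of $(0,1)$. The paper's remark that $M$ may depend on the chosen approximations is consistent with this.
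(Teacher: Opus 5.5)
Your proposal is correct and is essentially the paper's argument: the paper records the lemma as a direct consequence of (\ref{gaepsc}) and (\ref{iec}), and you spell out the uniform bounds on $\|\Theta_{0\eps}\|_{W^{2,\infty}(\Om)}$, on $\Gaeps$ over $[0,c_1]$, and on $\|v_{0\eps x}\|_{L^\infty(\Om)}$ via $W^{1,2}(\Om)\hra L^\infty(\Om)$. Your caveat that convergence as $\eps\searrow 0$ alone only controls small $\eps$ is a fair point the paper passes over, and it is settled either by building uniform boundedness on $(0,1)$ into the choice of approximations, as you propose, or by restricting to small $\eps$, which suffices because only $\eps<\epss$ enters later.
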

\proof
  This is a direct consequence of (\ref{gaepsc})
  and (\ref{iec}).
\qed
Let us launch our loop of arguments by fixing some constants which will play important roles in various places below.
\begin{lem}\label{lem4}
  There exist $\epss\in (0,1)$ and $k_i>0$, $i\in\{1,...,10\}$, such that if $\eps\in (0,\epss)$ and $T\in (0,\tme)$ are such that
  with $M$ as in Lemma \ref{lem01} we have
  \be{tM}
	\|\Teps(\cdot,t)\|_{L^\infty(\Om)}
	+ \|\Tepst(\cdot,t)\|_{L^\infty(\Om)} \le M
	\qquad \mbox{for all } t\in (0,T),
  \ee
  then
  \be{4.1}
	k_1 \le \gaeps(\Teps) \le k_2
	\quad \mbox{and} \quad
	k_3 \le \hgeps(\Teps) \le k_4
	\quad \mbox{and} \quad
	\Gaeps(\Teps) \le k_5
	\qquad \mbox{in } \Om\times (0,T)
  \ee
  as well as
  \be{4.2}
	|\gaeps'(\Teps)| \le k_6,
	\
	|\hgeps'(\Teps)| \le k_7,
	\
	|\gaeps''(\Teps)|\le k_8,
	\
	|\hgeps''(\Teps)| \le k_9
	\ \mbox{and} \
	|\Gaeps'(\Teps)| \le k_{10}
	\qquad \mbox{in } \Om\times (0,T).
  \ee
\end{lem}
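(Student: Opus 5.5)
The plan is to reduce everything to uniform convergence of the regularized coefficient functions on the fixed compact interval $[0,M]$. The assumption (\ref{tM}) gives $\|\Teps(\cdot,t)\|_{L^\infty(\Om)}\le M$ for all $t\in(0,T)$. Lemma \ref{lem1} gives $\Teps\ge 0$ in $\Om\times(0,\tme)$. Hence $\Teps(x,t)\in[0,M]$ for all $(x,t)\in\Om\times(0,T)$. Each inequality in (\ref{4.1}) and (\ref{4.2}) is therefore implied by the corresponding bound for $\gaeps,\hgeps,\Gaeps$ and their derivatives on $[0,M]$, uniformly in small $\eps$. Neither $T$ nor the temporal part of (\ref{tM}) plays any further role.

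By (\ref{g}), $\gamma$ and $\hg$ are continuous and positive on the compact set $[0,M]$. So
\[
m_1:=\min_{[0,M]}\gamma>0,\qquad m_2:=\min_{[0,M]}\hg>0 .
\]
The finite quantities $\max_{[0,M]}\gamma$, $\max_{[0,M]}\hg$, $\max_{[0,M]}\Gamma$ and $\max_{[0,M]}|\gamma'|$, $|\hg'|$, $|\gamma''|$, $|\hg''|$, $|\Gamma'|$ exist because $\gamma,\hg\in C^2$ and $\Gamma\in C^1$.

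By (\ref{gaepsc}), $\gaeps\to\gamma$ and $\hgeps\to\hg$ in $C^2([0,M])$, and $\Gaeps\to\Gamma$ in $C^1([0,M])$. We may therefore choose $\epss\in(0,1)$ such that for all $\eps\in(0,\epss)$:
\[
\|\gaeps-\gamma\|_{C^2([0,M])}\le \tfrac{m_1}{2},\qquad \|\hgeps-\hg\|_{C^2([0,M])}\le \tfrac{m_2}{2},\qquad \|\Gaeps-\Gamma\|_{C^1([0,M])}\le 1 .
\]

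The constants then follow directly. Take $k_1:=\frac{m_1}{2}$ and $k_2:=\max_{[0,M]}\gamma+\frac{m_1}{2}$, and analogously $k_3:=\frac{m_2}{2}$ and $k_4:=\max_{[0,M]}\hg+\frac{m_2}{2}$. Take $k_5:=\max_{[0,M]}\Gamma+1$. For the derivatives, set $k_6:=\max_{[0,M]}|\gamma'|+\frac{m_1}{2}$, and similarly $k_7,k_8,k_9$ from $\hg'$, $\gamma''$, $\hg''$. Finally, $k_{10}:=\max_{[0,M]}|\Gamma'|+1$.

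These constants depend only on $M$, which is fixed by Lemma \ref{lem01}, and on the chosen families, not on $\eps$ or $T$. Evaluating them at $\xi=\Teps(x,t)\in[0,M]$ yields (\ref{4.1}) and (\ref{4.2}).

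There is no real obstacle here. The only point needing care is the positive lower bounds $k_1,k_3$. Positivity of each $\gaeps$ from (\ref{gaeps}) alone would not give a uniform lower bound. That bound has to come from the strict positivity of the limits $\gamma$ and $\hg$ on the compact interval $[0,M]$, combined with uniform convergence; this is exactly why a threshold $\epss$ is needed.
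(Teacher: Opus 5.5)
Your proposal is correct and is essentially the paper's argument: the paper's proof just cites the convergence in (\ref{gaepsc}) together with the positivity of $\gamma$ and $\hg$. You make explicit the steps it leaves implicit, namely that $\Teps$ takes values in $[0,M]$ (by the nonnegativity from Lemma \ref{lem1} and (\ref{tM})), and that uniform $C^2$ resp.\ $C^1$ closeness on $[0,M]$ for $\eps<\epss$ yields $\eps$-independent bounds, with the positive lower bounds $k_1,k_3$ inherited from $\min_{[0,M]}\gamma>0$ and $\min_{[0,M]}\hg>0$.
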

\proof
  This immediately results from the convergence properties in (\ref{gaepsc})
  and the positivity of $\gamma$ and $\hg$ on $[0,\infty)$.
\qed
Subsequent to (\ref{4.1}), one further selection will become relevant in Lemma \ref{lem8} below:
\begin{lem}\label{lem7}
  Let $M$ and $\epss$ be as in Lemma \ref{lem01} and Lemma \ref{lem4}.
  Then there exists $k_{11}>0$ such that if $\eps\in (0,\epss)$ and $t\in (0,\tme)$ are such that (\ref{tM}) is satisfied, then
  \be{7.1}
	\|\Tepsxx(\cdot,t)\|_{L^\infty(\Om)} + \|\Tepsx(\cdot,t)\|_{L^\infty(\Om)}^2
	\le k_{11} \|\vepsx(\cdot,t)\|_{L^\infty(\Om)}^2 + k_{11}
	\qquad \mbox{for all } t\in (0,T).
  \ee
\end{lem}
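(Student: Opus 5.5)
From the heat equation in (\ref{0eps}) we read off directly
\bas
	D\Tepsxx = \Tepst - \Gaeps(\Teps)\vepsx^2 .
\eas
Under (\ref{tM}) we have $|\Tepst|\le M$, and Lemma \ref{lem4} gives $\Gaeps(\Teps)\le k_5$. Hence
\bas
	\|\Tepsxx(\cdot,t)\|_{L^\infty(\Om)} \le \frac{M}{D} + \frac{k_5}{D}\|\vepsx(\cdot,t)\|_{L^\infty(\Om)}^2
	\qquad \mbox{for all } t\in (0,T).
\eas
This already has the form claimed in (\ref{7.1}) for the second-order term.

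For the gradient term we use the Neumann condition $\Tepsx=0$ on $\pO$. Since $\Om$ is a bounded interval, for each $x\in\Om$ we may write $\Tepsx(x,t)=\int_{a}^x \Tepsxx(y,t)\,dy$ with $a$ the left endpoint of $\Om$. Thus
\bas
	\|\Tepsx(\cdot,t)\|_{L^\infty(\Om)}\le |\Om|\,\|\Tepsxx(\cdot,t)\|_{L^\infty(\Om)}.
\eas
Squaring this naively would produce $\|\vepsx\|_{L^\infty}^4$, which is the one genuine obstacle. To avoid it we use the interpolation inequality
\bas
	\|\vp_x\|_{L^\infty(\Om)}^2 \le C\|\vp\|_{L^\infty(\Om)}\|\vp_{xx}\|_{L^\infty(\Om)} + C\|\vp\|_{L^\infty(\Om)}^2 ,
\eas
applied to $\vp=\Teps(\cdot,t)$.

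The first-order-in-$\Tepsxx$ bound above then gives
\bas
	\|\Tepsx\|_{L^\infty(\Om)}^2 \le CM\Big(\frac{M}{D}+\frac{k_5}{D}\|\vepsx\|_{L^\infty(\Om)}^2\Big)+CM^2 .
\eas
The right-hand side is linear in $\|\vepsx\|_{L^\infty(\Om)}^2$. Alternatively, one can avoid the Gagliardo–Nirenberg type inequality altogether by choosing a point where $\Tepsx$ vanishes. For instance, multiplying $\Tepsx \Tepsxx$ and integrating from the boundary gives $\Tepsx^2(x)=2\int_a^x \Tepsx\Tepsxx$. Combined with $\int_\Om|\Tepsx|\le$ total variation, which is controlled via $\io \Tepsx^2=-\io \Teps\Tepsxx\le M|\Om|\|\Tepsxx\|_{L^\infty}$, this again yields a bound linear in $\|\Tepsxx\|_{L^\infty}$.

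Adding the two estimates and taking $k_{11}$ as the maximum of the resulting coefficients, which depend only on $M$, $D$, $k_5$ and $|\Om|$, proves (\ref{7.1}) for all $\eps\in(0,\epss)$. The only step needing care is this interpolation that keeps the gradient-squared term linear in $\|\vepsx\|_{L^\infty}^2$.
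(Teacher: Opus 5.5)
Your main argument is correct and is the paper's proof. The fourth equation in (\ref{0eps}), together with $|\Tepst|\le M$ and $\Gaeps(\Teps)\le k_5$, bounds $\|\Tepsxx\|_{L^\infty(\Om)}$ by $\frac{M}{D}+\frac{k_5}{D}\|\vepsx\|_{L^\infty(\Om)}^2$. The interpolation $\|\vp_x\|_{L^\infty(\Om)}^2\le C\|\vp\|_{L^\infty(\Om)}\|\vp_{xx}\|_{L^\infty(\Om)}$, combined with $\|\Teps\|_{L^\infty(\Om)}\le M$, then keeps the gradient term linear in $\|\vepsx\|_{L^\infty(\Om)}^2$. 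Your variant with the extra term $C\|\vp\|_{L^\infty(\Om)}^2$ is the one valid for all $\vp\in C^2(\bom)$; the paper's form without it holds only for functions satisfying $\vp_x=0$ on $\pO$, such as $\Teps$.

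Your ``alternative'' remark, however, is wrong and should be dropped. Combining $\Tepsx^2(x)\le 2\|\Tepsxx\|_{L^\infty(\Om)}\io|\Tepsx|$ with $\io|\Tepsx|\le|\Om|^{1/2}\big(M|\Om|\,\|\Tepsxx\|_{L^\infty(\Om)}\big)^{1/2}$ only yields $\|\Tepsx\|_{L^\infty(\Om)}^2\le 2M^{1/2}|\Om|\,\|\Tepsxx\|_{L^\infty(\Om)}^{3/2}$. That produces a term of order $\|\vepsx\|_{L^\infty(\Om)}^3$, not a linear one.
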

\proof
  From (\ref{tM}) and (\ref{4.2}) it follows that due to the fourth equation in (\ref{0eps}),
  \bas
	|\Tepsxx| = \Big| \frac{\Tepst-\Gaeps(\Teps) \vepsx^2}{D}\Big|
	\le \frac{M}{D} + \frac{k_5}{D} \vepsx^2
	\qquad \mbox{in } \Om\times (0,T),
  \eas
  while a Gagliardo-Nirenberg interpolation yields $c_1>0$ fulfilling
  \bas
	\|\vp_x\|_{L^\infty(\Om)}^2 \le c_1\|\vp_{xx}\|_{L^\infty(\Om)} \|\vp\|_{L^\infty(\Om)}
	\qquad \mbox{for all } \vp\in C^2(\bom).
  \eas
  Therefore, (\ref{7.1}) results if we let
  $k_{11}:=(1+c_1 M) \cdot \max \big\{ \frac{M}{D},\frac{k_5}{D}\big\}$.
\qed
Now straightforward computation describes the evolution of the major part in an approximate variant of the
functional from (\ref{en}) .
\begin{lem}\label{lem3}
  Let $\eps\in (0,1)$. Then
  \bea{3.1}
	& & \hs{-20mm}
	\frac{d}{dt} \bigg\{
	\frac{1}{2} \io \wepsx^2
	+ \frac{1}{2} \io \gaeps(\Teps) \vepsxx^2
	+ \io \hgeps(\Teps) \uepsxx \vepsxx
	+ \eps \io \hgeps(\Teps) \uepsxxx^2 \bigg\} \nn\\
	& & + \al \io \wepsx^2
	+ \eps \io \wepsxx^2
	+ \eps \io \gaeps(\Teps) \vepsxxx^2
	+ 2\eps^2 \io \hgeps(\Teps) \uepsxxxx^2 \nn\\
	&=& \io \hgeps(\Teps) \vepsxx^2
	+ \frac{1}{2} \io \gaeps'(\Teps) \Tepst \vepsxx^2
	+ \io \hgeps'(\Teps) \Tepst \uepsxx \vepsxx \nn\\
	& & + \io \gaeps'(\Teps) \Tepsx \vepsxx \wepsx
	+ \io \hgeps'(\Teps) \Tepsx \uepsxx \wepsx
	+ \io \gaeps'(\Teps) \Tepsxx \vepsx \wepsx \nn\\
	& & + \io \hgeps'(\Teps) \Tepsxx \uepsx \wepsx
	+ \io \gaeps''(\Teps) \Tepsx^2 \vepsx \wepsx
	+ \io \hgeps''(\Teps) \Tepsx^2 \uepsx \wepsx \nn\\
	& &
	+ \eps \io \hgeps'(\Teps) \Tepst \uepsxxx^2
	- \eps \io \gaeps'(\Teps) \Tepsx \vepsxx \vepsxxx \nn\\
	& & - \eps \io \hgeps'(\Teps) \Tepsx \uepsxx \vepsxxx
	- \eps \io \hgeps'(\Teps) \Tepsx \uepsxxx \vepsxx \nn\\
	& & - 2\eps^2 \io \hgeps'(\Teps) \Tepsx \uepsxxx \uepsxxxx
	\qquad \mbox{for all } t\in (0,\tme).
  \eea
\end{lem}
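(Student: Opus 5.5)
The identity (\ref{3.1}) is obtained by differentiating each of the four integrals separately, using the first three equations of (\ref{0eps}) and integrating by parts in space. The Neumann conditions $\wepsx=\vepsx=\uepsx=\Tepsx=0$ on $\pO$ kill all boundary terms. Since all quantities are $C^\infty$ in $\bom\times(0,\tme)$ by (\ref{reg}), all manipulations are classical. Differentiating the equations in $x$ and using the boundary conditions also gives $\wepsxxx=\vepsxxx=\uepsxxx=0$ on $\pO$ (by compatibility), and these are used whenever third derivatives are integrated by parts.

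\emph{First term.} We compute $\frac12\frac{d}{dt}\io\wepsx^2=-\io\wepsxx\wepst$ and insert the first equation of (\ref{0eps}). The contributions $\eps\wepsxx$ and $-\al\weps$ give $-\eps\io\wepsxx^2-\al\io\wepsx^2$. For the flux part we write $-\io\wepsxx(\gaeps(\Teps)\vepsx)_x=\io\wepsx(\gaeps(\Teps)\vepsx)_{xx}$, since $\wepsx=0$ on $\pO$. Expanding $(\gaeps(\Teps)\vepsx)_{xx}=\gaeps\vepsxxx+2\gaeps'\Tepsx\vepsxx+\gaeps'\Tepsxx\vepsx+\gaeps''\Tepsx^2\vepsx$ produces, in addition to $\io\gaeps\vepsxxx\wepsx$, the terms listed on the right of (\ref{3.1}). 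The analogous expansion holds for $\hgeps(\Teps)\uepsx$.

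\emph{Second term.} We have
\[
\frac{d}{dt}\,\frac12\io\gaeps(\Teps)\vepsxx^2=\frac12\io\gaeps'\Tepst\vepsxx^2+\io\gaeps\vepsxx\big(\eps\vepsxxxx+\wepsxx\big).
\]
The term $\io\gaeps\vepsxx\wepsxx$ is integrated by parts to $-\io\gaeps\vepsxxx\wepsx-\io\gaeps'\Tepsx\vepsxx\wepsx$, which cancels the leading term from the first step and fixes the coefficient of $\io\gaeps'\Tepsx\vepsxx\wepsx$ at one. Similarly, $\eps\io\gaeps\vepsxx\vepsxxxx=-\eps\io\gaeps\vepsxxx^2-\eps\io\gaeps'\Tepsx\vepsxx\vepsxxx$.

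\emph{Third term.} Differentiating $\io\hgeps\uepsxx\vepsxx$ gives
\[
\io\hgeps'\Tepst\uepsxx\vepsxx+\io\hgeps\big(\eps\uepsxxxx+\vepsxx\big)\vepsxx+\io\hgeps\uepsxx\big(\eps\vepsxxxx+\wepsxx\big).
\]
The piece $\io\hgeps\uepsxx\wepsxx$ is integrated by parts and cancels $\io\hgeps\uepsxxx\wepsx$ from the first step, leaving $-\io\hgeps'\Tepsx\uepsxx\wepsx$. This combines with the $2\hgeps'\Tepsx\uepsxx\wepsx$ from the first step to give a coefficient of one. The term $\io\hgeps\vepsxx^2$ stays on the right.

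\emph{Fourth term and the main obstacle.} The delicate part is the pair of mixed $\eps$-terms
\[
\eps\io\hgeps\uepsxxxx\vepsxx+\eps\io\hgeps\uepsxx\vepsxxxx .
\]
After one integration by parts each becomes $-\eps\io\hgeps\uepsxxx\vepsxxx$ plus a $\Tepsx$-term. The combined $-2\eps\io\hgeps\uepsxxx\vepsxxx$ has no sign, and this is exactly why the correction $\eps\io\hgeps\uepsxxx^2$ is included in the functional. Using $\uepsxxxt=\eps\uepsxxxxx+\vepsxxx$, we get
\[
\frac{d}{dt}\,\eps\io\hgeps\uepsxxx^2=\eps\io\hgeps'\Tepst\uepsxxx^2+2\eps^2\io\hgeps\uepsxxx\uepsxxxxx+2\eps\io\hgeps\uepsxxx\vepsxxx .
\]
The last term cancels the sign-indefinite mixed term exactly. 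One more integration by parts, with $\uepsxxx=0$ on $\pO$, turns $2\eps^2\io\hgeps\uepsxxx\uepsxxxxx$ into the dissipation $-2\eps^2\io\hgeps\uepsxxxx^2$ together with $-2\eps^2\io\hgeps'\Tepsx\uepsxxx\uepsxxxx$.

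Finally, we collect all terms and check the coefficients one by one against (\ref{3.1}). This bookkeeping, in particular verifying the exact cancellation in the fourth step, is the only real difficulty.
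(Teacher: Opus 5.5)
Your proposal is correct and is essentially the paper's computation. The paper tests the first equation of (\ref{0eps}) by $-\wepsxx$ and substitutes $\wepsxx=\vepsxxt-\eps\vepsxxxx$ to produce the time derivatives, which is your term-by-term differentiation read in the opposite direction; it uses the same boundary facts $\vepsxxx=\uepsxxx=0$ on $\pO$, and the correction $\eps\io\hgeps(\Teps)\uepsxxx^2$ plays the same role of cancelling $-2\eps\io\hgeps(\Teps)\uepsxxx\vepsxxx$ (your extra term $\io\gaeps(\Teps)\vepsxxx\wepsx$ just cancels, and all your coefficients check out).
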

\proof
  Testing the first equation in (\ref{0eps}) by $-\wepsxx$ shows that
  for all $t\in (0,\tme)$,
  \bea{3.2}
	\frac{1}{2} \frac{d}{dt} \io \wepsx^2
	+ \al \io \wepsx^2
	+ \eps \io \wepsxx^2
	&=& - \io \gaeps(\Teps) \vepsxx \wepsxx
	- \io \gaeps'(\Teps) \Tepsx \vepsx \wepsxx \nn\\
	& & - \io \hgeps(\Teps) \uepsxx \wepsxx
	- \io \hgeps'(\Teps) \Tepsx \uepsx \wepsxx,
  \eea
  where due to the fact that $\vepsxxt = \eps \vepsxxxx + \wepsxx$,
  \bea{3.3}
	- \io \gaeps(\Teps) \vepsxx \wepsxx
	&=& - \io \gaeps(\Teps) \vepsxx \cdot (\vepsxxt - \eps \vepsxxxx) \nn\\
	&=& - \frac{1}{2} \frac{d}{dt} \io \gaeps(\Teps) \vepsxx^2
	+ \frac{1}{2} \io \gaeps'(\Teps) \Tepst \vepsxx^2 \nn\\
	& & - \eps \io \gaeps(\Teps) \vepsxxx^2
	- \eps \io \gaeps'(\Teps) \Tepsx \vepsxx \vepsxxx
  \eea
  as well as
  \bea{3.4}
	- \io \hgeps(\Teps) \uepsxx \wepsxx
	&=& - \io \hgeps(\Teps) \uepsxx\cdot (\vepsxxt-\eps\vepsxxxx) \nn\\
	&=& - \frac{d}{dt} \io \hgeps(\Teps) \uepsxx \vepsxx
	+ \io \hgeps(\Teps) \uepsxxt \vepsxx
	+ \io \hgeps'(\Teps) \Tepst \uepsxx \vepsxx \nn\\
	& & - \eps \io \hgeps(\Teps) \uepsxxx\vepsxxx
	- \eps \io \hgeps'(\Teps) \Tepsx\uepsxx\uepsxxx
  \eea
  for all $t\in (0,\tme)$, because the boundary conditions in (\ref{0eps}) particularly imply that
  $\vepsxxx=0$ on $\pO\times (0,\tme)$.
  By the third equation in (\ref{0eps}), we similarly find that
  for all $t\in (0,\tme)$,
  \bas
	\io \hgeps(\Teps) \uepsxxt \vepsxx
	&=& \io \hgeps(\Teps) \cdot (\eps\uepsxxxx+\vepsxx)\cdot\vepsxx \nn\\
	&=& - \eps\io \hgeps(\Teps) \uepsxxx\vepsxxx
	- \eps \io \hgeps'(\Teps) \Tepsx \uepsxxx\vepsxx
	+ \io \hgeps(\Teps) \vepsxx^2,
  \eas
  and that, since also $\uepsxxx=0$ on $\pO\times (0,\tme)$,
  \bas
	-2\eps \io \hgeps(\Teps) \uepsxxx\vepsxxx
	&=& - 2\eps\io \hgeps(\Teps) \uepsxxx\cdot (\uepsxxxt - \eps\uepsxxxxx) \nn\\
	&=& - \eps \frac{d}{dt} \io \hgeps(\Teps) \uepsxxx^2
	+ \eps \io \hgeps'(\Teps) \Tepst \uepsxxx^2 \nn\\
	& & - 2\eps^2 \io \hgeps(\Teps) \uepsxxxx^2
	- 2\eps^2 \io \hgeps'(\Teps) \Tepsx \uepsxxx \uepsxxxx
  \eas
  for all $t\in (0,\tme)$, whence from (\ref{3.4}) it follows that
  \bea{3.5}
	- \io \hgeps(\Teps) \uepsxx \wepsxx
	&=& - \frac{d}{dt} \io \hgeps(\Teps) \uepsxx\vepsxx
	- \eps \frac{d}{dt} \io \hgeps(\Teps) \uepsxxx^2 \nn\\
	& & + \io \hgeps(\Teps) \vepsxx^2
	+ \io \hgeps'(\Teps) \Tepst \uepsxx\vepsxx \nn\\
	& & - \eps \io \hgeps'(\Teps) \Tepsx \uepsxx \uepsxxx
	- \eps \io \hgeps'(\Teps) \Tepsx \uepsxx\vepsxx \nn\\
	& & + \eps \io \hgeps'(\Teps) \Tepst \uepsxxx^2 \nn\\
	& & - 2\eps^2 \io \hgeps(\Teps) \uepsxxxx^2
	- 2\eps^2 \io \hgeps'(\Teps) \Tepsx \uepsxxx\uepsxxxx
  \eea
  for all $t\in (0,\tme)$.
  Moreover, the order of differentiation acting on $\weps$ in the second and last summands on the right of (\ref{3.2})
  can be reduced using two further integrations by parts according to
  \bas
	- \io \gaeps'(\Teps) \Tepsx\vepsx\wepsxx
	&=& \io \gaeps'(\Teps) \Tepsx \vepsxx\wepsx
	+ \io \gaeps'(\Teps) \Tepsxx\vepsx\wepsx
	+ \io \gaeps''(\Teps) \Tepsx^2 \vepsx \wepsx
  \eas
  and
  \bas
	\io \hgeps'(\Teps) \Tepsx\uepsx\wepsxx
	&=& \io \hgeps'(\Teps) \Tepsx\uepsxx\wepsx
	+ \io \hgeps'(\Teps) \Tepsxx\uepsx\wepsx
	+ \io \hgeps''(\Teps) \Tepsx^2 \uepsx\wepsx
  \eas
  for all $t\in (0,\tme)$.
  In conjunction with (\ref{3.5}), (\ref{3.3}) and (\ref{3.2}), this establishes (\ref{3.1}).
\qed
This information will be combined with the following elementary evolution property.
\begin{lem}\label{lem2}
  Let $\eps\in (0,1)$. Then
  \be{2.1}
	\frac{1}{2} \frac{d}{dt} \io \uepsxx^2 + \eps\io \uepsxxx^2
	\le \frac{1}{2} \io \uepsxx^2 + \frac{1}{2} \io \vepsxx^2
	\qquad \mbox{for all } t\in (0,\tme).
  \ee
\end{lem}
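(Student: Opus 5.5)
We test the third equation in (\ref{0eps}), $\uepst=\eps\uepsxx+\veps$, against $\uepsxxxx$, after differentiating it twice in space. Since all components are smooth on $\bom\times(0,\tme)$ by (\ref{reg}), this is legitimate, and it produces an identity for $\frac{1}{2}\frac{d}{dt}\io\uepsxx^2$.

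The first step is to differentiate the third equation twice with respect to $x$, which gives $\uepsxxt=\eps\uepsxxxx+\vepsxx$ in $\Om\times(0,\tme)$. Multiplying by $\uepsxx$ and integrating over $\Om$ yields
\bas
	\frac{1}{2}\frac{d}{dt}\io \uepsxx^2
	= \eps\io \uepsxx\uepsxxxx + \io \uepsxx\vepsxx
	\qquad \mbox{for all } t\in (0,\tme).
\eas
The second step is to integrate the first term on the right by parts:
\bas
	\eps\io\uepsxx\uepsxxxx=-\eps\io\uepsxxx^2+\eps\big[\uepsxx\uepsxxx\big]_{\pO}.
\eas
The boundary contribution vanishes because $\uepsxxx=0$ on $\pO\times(0,\tme)$. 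This was already used in the proof of Lemma \ref{lem3}, and it follows from applying $\pa_x$ to the third equation: $\uepsxt=\eps\uepsxxx+\vepsx$, where $\uepsxt=0$ and $\vepsx=0$ on $\pO$ by the Neumann conditions. Here $\eps>0$ is essential. The required boundary regularity up to $\pO$ for $t>0$ is supplied by (\ref{reg}).

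The third step is to apply Young's inequality, $\io\uepsxx\vepsxx\le\frac12\io\uepsxx^2+\frac12\io\vepsxx^2$, which gives (\ref{2.1}). In fact the inequality holds with the full dissipation term $\eps\io\uepsxxx^2$ retained on the left.

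The only step requiring any care is the justification of the vanishing boundary term, that is, $\uepsxxx=0$ on $\pO$. As explained above, this follows from the boundary conditions together with the smoothness of the solution in $\bom\times(0,\tme)$; everything else is routine.
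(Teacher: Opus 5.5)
Your proof is correct and follows the paper's argument. Like the paper, you differentiate $\uepst=\eps\uepsxx+\veps$ twice in space, test by $\uepsxx$, integrate by parts using $\uepsxxx=0$ on $\pO$ (which you justify correctly from the Neumann conditions and $\eps>0$), and apply Young's inequality to $\io\uepsxx\vepsxx$. The only slip is your opening sentence, which speaks of testing against $\uepsxxxx$, whereas the computation you actually carry out multiplies the twice-differentiated equation by $\uepsxx$; the two are equivalent after integrating by parts, but you should describe the step consistently.
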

\proof
  This directly follows by using the third equation in (\ref{0eps}) and employing Young's inequality in confirming that
  \bas
	\frac{1}{2} \frac{d}{dt} \io \uepsxx^2
	+ \eps \io \uepsxxx^2
	= \io \uepsxx \vepsxx
	\le \frac{1}{2} \io \uepsxx^2 + \frac{1}{2} \io \vepsxx^2
  \eas
  for all $t\in (0,\tme)$.
\qed
We can now precisely define our candidate for an approximate counterpart of the functional in (\ref{en}),
and derive some basic two-sided bounds for this as long as (\ref{tM}) holds.
\begin{lem}\label{lem5}
  There exists $k_{12}>0$ such that if with $\epss$ and $(k_i)_{i\in\{1,...,10\}}$ as in Lemma \ref{lem4} we let
  \be{B}
	B:=\frac{4k_4^2}{k_1}
  \ee
  and
  \be{y}
	\yeps(t):=\frac{1}{2} \io \wepsx^2
	+ \frac{1}{2} \io \gaeps(\Teps) \vepsxx^2
	+ \io \hgeps(\Teps) \uepsxx \vepsxx
	+ \frac{B}{2} \io \uepsxx^2
	+ \eps \io \hgeps(\Teps) \uepsxxx^2
  \ee
  for $t\in (0,\tme)$ and $\eps\in (0,1)$,
  then whenever $\eps\in (0,\epss)$ and $T\in (0,\tme)$ are such that (\ref{tM}) holds with $M$ taken from Lemma \ref{lem01},
  it follows that
  \be{5.1}
	k_{12} \yeps(t)
	\ge \io \wepsx^2
	+ \io \vepsxx^2
	+ \io \uepsxx^2
	+ \eps \io \uepsxxx^2
	\qquad \mbox{for all } t\in (0,T).
  \ee
\end{lem}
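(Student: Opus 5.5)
The plan is to bound $\yeps$ from below pointwise in time by using the two-sided bounds (\ref{4.1}) from Lemma \ref{lem4}, which are available for $t\in(0,T)$ under (\ref{tM}). The only term in (\ref{y}) without a definite sign is the cross term $\io \hgeps(\Teps)\uepsxx\vepsxx$. I will absorb it into the $\vepsxx^2$ and $\uepsxx^2$ contributions, exactly as was done for the functional in (\ref{15.30}) during the uniqueness proof.

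First, by (\ref{4.1}) we have $\gaeps(\Teps)\ge k_1$ and $0<\hgeps(\Teps)\le k_4$ in $\Om\times(0,T)$. Combining this with Young's inequality gives
\bas
	\io \hgeps(\Teps)\uepsxx\vepsxx
	\ge - k_4 \io |\uepsxx|\,|\vepsxx|
	\ge - \frac{k_1}{4}\io \vepsxx^2 - \frac{k_4^2}{k_1}\io \uepsxx^2 .
\eas
Since $B=\frac{4k_4^2}{k_1}$ by (\ref{B}), we get $\frac{B}{2}-\frac{k_4^2}{k_1} = \frac{k_4^2}{k_1} = \frac{B}{4}$. For the last term, $\hgeps(\Teps)\ge k_3$ yields $\eps\io\hgeps(\Teps)\uepsxxx^2 \ge k_3\eps\io\uepsxxx^2$. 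Together these give
\bas
	\yeps(t) \ge \frac12\io\wepsx^2 + \frac{k_1}{4}\io\vepsxx^2 + \frac{k_4^2}{k_1}\io\uepsxx^2 + k_3\eps\io\uepsxxx^2
	\qquad\mbox{for all } t\in(0,T).
\eas

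The claim (\ref{5.1}) then follows with
\bas
	k_{12}:=\max\Big\{2,\frac{4}{k_1},\frac{k_1}{k_4^2},\frac{1}{k_3}\Big\},
\eas
which depends only on the constants from Lemma \ref{lem4} and not on $\eps$ or $T$.

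There is no real obstacle. The only care needed is to check that the weight $B$ fixed in (\ref{B}) leaves a positive multiple of $\io\uepsxx^2$ after the Young absorption, which the computation above confirms.
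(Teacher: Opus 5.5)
Your proof is correct and follows the paper's argument: you use the lower bounds from (\ref{4.1}), absorb the cross term via Young's inequality into $\frac{k_1}{4}\io\vepsxx^2+\frac{k_4^2}{k_1}\io\uepsxx^2$, and exploit $\frac{k_4^2}{k_1}=\frac{B}{4}$. Your constant $\max\{2,\frac{4}{k_1},\frac{k_1}{k_4^2},\frac{1}{k_3}\}$ coincides with the paper's $\max\{2,\frac{4}{k_1},\frac{4}{B},\frac{1}{k_3}\}$, since $\frac{4}{B}=\frac{k_1}{k_4^2}$.
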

\proof
  We use the lower estimates in (\ref{4.1}) to see that
  \bas
	\yeps(t)
	\ge \frac{1}{2} \io \wepsx^2
	+ \frac{k_1}{2} \io \vepsxx^2
	+ \io \hgeps(\Teps) \uepsxx\vepsxx
	+ \frac{B}{2} \io \uepsxx^2
	\qquad \mbox{for all } t\in (0,T),
  \eas
  and here a combination of Young's inequality with the rightmost bound in (\ref{4.1}) shows that
  \bas
	\bigg| \io \hgeps(\Teps) \uepsxx\vepsxx \bigg|
	\le \frac{k_1}{4} \io \vepsxx^2
	+ \frac{k_4^2}{k_1} \io \uepsxx^2
	\qquad \mbox{for all } t\in (0,T).
  \eas
  Since (\ref{B}) means that $\frac{k_4^2}{k_1}=\frac{B}{4}$, this already yields (\ref{5.1}) if we let
  $k_{12}:=\max \big\{ 2, \frac{4}{k_1}, \frac{4}{B}, \frac{1}{k_3}\big\}$.
\qed
A combination of Lemma \ref{lem3} with Lemma \ref{lem2} shows that these functionals indeed enjoy evolution properties
resembling those announced in (\ref{en2}):
\begin{lem}\label{lem8}
  With $M$ and $\epss$ taken from Lemma \ref{lem01} and Lemma \ref{lem4},
  one can find $k_{13}>0$ with the property that if $\eps\in (0,\epss)$
  and $T\in (0,\tme)$ are such that (\ref{tM}) holds, then the function in (\ref{y}) satisfies
  \be{8.1}
	\yeps'(t) + k_1 \eps \io \vepsxxx^2
	\le k_{13} \yeps^2(t) + k_{13} \yeps(t)
	\qquad \mbox{for all } t\in (0,T).
  \ee
\end{lem}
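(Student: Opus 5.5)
We combine the identity (\ref{3.1}) from Lemma \ref{lem3} with $B$ times the inequality (\ref{2.1}) from Lemma \ref{lem2}. Since $\yeps$ in (\ref{y}) is exactly the bracketed functional in (\ref{3.1}) plus $\frac{B}{2}\io \uepsxx^2$, this yields
\bas
	\yeps'(t) + \al \io \wepsx^2 + \eps\io\wepsxx^2 + \eps\io\gaeps(\Teps)\vepsxxx^2 + 2\eps^2\io\hgeps(\Teps)\uepsxxxx^2 + B\eps\io\uepsxxx^2
	\le \frac{B}{2}\io\uepsxx^2 + \frac{B}{2}\io\vepsxx^2 + I_1+\dots+I_{14},
\eas
where $I_1,\dots,I_{14}$ denote the fourteen summands on the right of (\ref{3.1}). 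We drop the harmless nonnegative terms, which is possible since $\al\ge 0$ and $\hgeps>0$. By (\ref{4.1}) we have $\eps\io\gaeps(\Teps)\vepsxxx^2\ge k_1\eps\io\vepsxxx^2$. We keep $2k_1\eps\io\vepsxxx^2$ aside in the form of a budget: half of it absorbs the $\eps$-terms, and $k_1\eps\io\vepsxxx^2$ survives on the left as in (\ref{8.1}). The term $2\eps^2\io\hgeps(\Teps)\uepsxxxx^2\ge 2k_3\eps^2\io\uepsxxxx^2$ is likewise retained for absorption.

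Throughout, we use (\ref{tM}) so that $\|\Tepst\|_{L^\infty(\Om)}\le M$, together with the bounds (\ref{4.1}) and (\ref{4.2}) and Lemma \ref{lem7}. The purely quadratic terms are immediate. The terms $\io\hgeps\vepsxx^2$, $\frac12\io\gaeps'\Tepst\vepsxx^2$, $\io\hgeps'\Tepst\uepsxx\vepsxx$ and $\eps\io\hgeps'\Tepst\uepsxxx^2$ are bounded by $C\yeps$ by means of (\ref{5.1}). For the cross terms containing $\Tepsx$ or $\Tepsxx$, the key tool is Lemma \ref{lem7}, which gives $\|\Tepsx\|_{L^\infty}^2+\|\Tepsxx\|_{L^\infty}\le k_{11}\|\vepsx\|_{L^\infty}^2+k_{11}$. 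We combine this with the one-dimensional embedding. Since $\io\veps=0$ by (\ref{mass}) and $\vepsx$ vanishes on $\pO$, the Poincaré inequality gives $\|\vepsx\|_{L^\infty}^2\le C\io\vepsxx^2\le C\yeps$. Similarly, $\|\uepsx\|_{L^\infty}^2\le C\yeps$. Hence $\|\Tepsx\|_{L^\infty}^2+\|\Tepsxx\|_{L^\infty}\le C(\yeps+1)$. Then, for instance,
\bas
	\Big|\io\gaeps'\Tepsxx\vepsx\wepsx\Big|
	\le k_6\|\Tepsxx\|_{L^\infty}\|\vepsx\|_{L^2}\|\wepsx\|_{L^2}
	\le C(\yeps+1)\yeps .
\eas
In the same way, $\io\gaeps'\Tepsx\vepsxx\wepsx$ is bounded by $C(\yeps+1)^{1/2}\yeps\le C\yeps^2+C\yeps$. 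The terms involving $\Tepsx^2\vepsx\wepsx$ are bounded by $C(\yeps+1)\yeps$. All of these fit the form $k_{13}\yeps^2+k_{13}\yeps$.

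The main obstacle is the four $\eps$-weighted terms, which contain third and fourth derivatives that $\yeps$ does not control. We treat them with Young's inequality against the retained dissipation:
\bas
	\eps\Big|\io\gaeps'\Tepsx\vepsxx\vepsxxx\Big|
	\le \frac{k_1\eps}{4}\io\vepsxxx^2 + C\eps\|\Tepsx\|_{L^\infty}^2\io\vepsxx^2
	\le \frac{k_1\eps}{4}\io\vepsxxx^2 + C(\yeps+1)\yeps .
\eas
The terms $\eps\io\hgeps'\Tepsx\uepsxx\vepsxxx$ and $\eps\io\hgeps'\Tepsx\uepsxxx\vepsxx$ are handled identically. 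In the second of these we use $\eps\io\uepsxxx^2\le k_{12}\yeps$, together with $\eps<1$. The last term is estimated by
\bas
	2\eps^2\Big|\io\hgeps'\Tepsx\uepsxxx\uepsxxxx\Big|\le k_3\eps^2\io\uepsxxxx^2+C\eps^2\|\Tepsx\|_{L^\infty}^2\io\uepsxxx^2 ,
\eas
whose last summand is at most $C(\yeps+1)\yeps$. After absorbing, $k_1\eps\io\vepsxxx^2$ remains on the left, and collecting the constants, which depend only on $M$ and $k_1,\dots,k_{12}$ and not on $\eps$, yields $k_{13}$ and hence (\ref{8.1}).

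One should double-check the bookkeeping to confirm that no term requires $\|\wepsx\|_{L^\infty}$ or $\|\vepsxxx\|_{L^2}$ without an $\eps$ factor. From the list in (\ref{3.1}) none does, so the plan closes.
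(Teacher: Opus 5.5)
Your plan is essentially the paper's own proof: you add $B$ times (\ref{2.1}) to (\ref{3.1}), bound $\|\Tepsx\|_{L^\infty(\Om)}^2+\|\Tepsxx\|_{L^\infty(\Om)}$ by $C(\yeps+1)$ via Lemma \ref{lem7} and the embedding for $\vepsx\in W_0^{1,2}(\Om)$, and control the remaining terms by (\ref{5.1}) and Young's inequality. All of these individual estimates are fine.

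The step that fails as written is the dissipation bookkeeping. By (\ref{4.1}) you only have $\eps\io\gaeps(\Teps)\vepsxxx^2\ge k_1\eps\io\vepsxxx^2$, so there is no budget of $2k_1\eps\io\vepsxxx^2$. You spend $\frac{k_1\eps}{4}\io\vepsxxx^2$ on each of $-\eps\io\gaeps'(\Teps)\Tepsx\vepsxx\vepsxxx$ and $-\eps\io\hgeps'(\Teps)\Tepsx\uepsxx\vepsxxx$. What your argument actually proves is therefore (\ref{8.1}) with $\frac{k_1}{2}$ in place of $k_1$. The paper's proof has the same slip: in (\ref{8.15}) it uses up the whole of $k_1\eps\io\vepsxxx^2$, yet still displays it on the left of (\ref{8.17}).

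For the only later use, in Lemma \ref{lem10}, the constant $\frac{k_1}{2}$ is harmless, since it merely enlarges $k_{15}$. If you want (\ref{8.1}) literally, do not spend dissipation on these two terms. Since $\Tepsx=0$ on $\pO$ and the solution is smooth up to the boundary for $t>0$, integration by parts gives
\[
	-\eps\io\gaeps'(\Teps)\Tepsx\vepsxx\vepsxxx
	=\frac{\eps}{2}\io\big(\gaeps''(\Teps)\Tepsx^2+\gaeps'(\Teps)\Tepsxx\big)\vepsxx^2
\]
and
\[
	-\eps\io\hgeps'(\Teps)\Tepsx\uepsxx\vepsxxx
	=\eps\io\big(\hgeps''(\Teps)\Tepsx^2+\hgeps'(\Teps)\Tepsxx\big)\uepsxx\vepsxx
	+\eps\io\hgeps'(\Teps)\Tepsx\uepsxxx\vepsxx .
\]
Both right-hand sides are bounded by $C(\yeps+1)\yeps$ using Lemma \ref{lem7}, (\ref{5.1}) and $\eps<1$, exactly as in your other estimates. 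The full term $\eps\io\gaeps(\Teps)\vepsxxx^2\ge k_1\eps\io\vepsxxx^2$ then remains on the left.
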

\proof
  On the right-hand side of (\ref{3.1}), we rely on our hypothesis in (\ref{tM}) in applying Lemma \ref{lem4} along with
  the Cauchy-Schwarz inequality to see that
  \be{8.2}
	\io \hgeps(\Teps) \vepsxx^2
	\le k_4 \io \vepsxx^2
	\le k_4 k_{12} \yeps(t)
  \ee
  and
  \be{8.3}
	\frac{1}{2} \io \gaeps'(\Teps) \Tepst \vepsxx^2
	\le \frac{k_6 M}{2} \io \vepsxx^2
	\le \frac{k_6 k_{12} M}{2} \yeps(t)
  \ee
  and
  \be{8.4}
	\io \hgeps'(\Teps) \Tepst \uepsxx\vepsxx
	\le k_7 M \io |\uepsxx| \cdot |\vepsxx|
	\le k_7 M \|\uepsxx\|_{L^2(\Om)} \|\vepsxx\|_{L^2(\Om)}
	\le k_7 k_{12} M \yeps(t)
  \ee
  as well as
  \be{8.5}
	\eps \io \hgeps'(\Teps) \Tepst \uepsxxx^2
	\le k_7 M \eps \io \uepsxxx^2
	\le k_7 k_{12} M \yeps(t)
  \ee
  for all $t\in (0,T)$.
  We next take $c_1>0$ such that in accordance with a Sobolev inequality we have
  \be{8.6}
	\|\vp\|_{L^2(\Om)} + \|\vp\|_{L^\infty(\Om)} \le c_1 \|\vp_x\|_{L^2(\Om)}
	\qquad \mbox{for all } \vp\in W_0^{1,2}(\Om),
  \ee
  and note that then from Lemma \ref{lem7} we particularly obtain that
  \bea{8.7}
	\|\Tepsxx\|_{L^\infty(\Om)} + \|\Tepsx\|_{L^\infty(\Om)}^2
	&\le& k_{11} c_1^2 \io \vepsxx^2 + k_{11} \nn\\
	&\le& c_1 \yeps(t) + c_2
	\qquad \mbox{for all } t\in (0,T)
  \eea
  with $c_2:=\max\{ k_{12} k_{11} c_1^2 , k_{11}\}$.
  Therefore, repeated application of Lemma \ref{lem4} together with the Cauchy-Schwarz inequality, (\ref{8.6}) and (\ref{8.7})
  shows that
  \bea{8.8}
	\io \gaeps'(\Teps) \Tepsx \vepsxx \wepsx
	&\le& k_6 \cdot (c_2\yeps(t)+c_2)^\frac{1}{2} \|\vepsxx\|_{L^2(\Om)} \|\wepsx\|_{L^2(\Om)} \nn\\
	&\le& k_6 k_{12} c_2^\frac{1}{2} (\yeps(t)+1)^\frac{1}{2} \yeps(t)
	\qquad \mbox{for all } t\in (0,T),
  \eea
  that similarly
  \bea{8.9}
	\io \hgeps'(\Teps) \Tepsx \uepsxx\wepsx
	&\le& k_7 \cdot (c_2\yeps(t)+c_2)^\frac{1}{2} \|\uepsxx\|_{L^2(\Om)} \|\wepsx\|_{L^2(\Om)} \nn\\
	&\le& k_7 k_{12} c_2^\frac{1}{2} (\yeps(t)+1)^\frac{1}{2} \yeps(t)
	\qquad \mbox{for all } t\in (0,T),
  \eea
  that
  \bea{8.10}
	\io \gaeps'(\Teps) \Tepsxx \vepsx \wepsx
	&\le& k_6\cdot (c_2\yeps(t)+c_2) \|\vepsx\|_{L^2(\Om)} \|\wepsx\|_{L^2(\Om)} \nn\\
	&\le& k_6\cdot (c_2\yeps(t)+c_2) \cdot c_1 \|\vepsxx\|_{L^2(\Om)} \|\wepsx\|_{L^2(\Om)} \nn\\
	&\le& k_6 k_{12} c_1 c_2 \yeps^2(t)
	+ k_6 c_1 c_2 \yeps(t)
	\qquad \mbox{for all } t\in (0,T)
  \eea
  and
  \be{8.11}
	\io \hgeps'(\Teps) \Tepsxx \uepsx\wepsx
	\le k_7 k_{12} c_1 c_2 \yeps^2(t) + k_7 k_{12} c_1 c_2 \yeps(t)
	\qquad \mbox{for all } t\in (0,T),
  \ee
  and that
  \bea{8.12}
	\io \gaeps''(\Teps) \Tepsx^2 \vepsx \wepsx
	&\le& k_8 \cdot (c_2\yeps(t)+c_2) \|\vepsx\|_{L^2(\Om)} \|\wepsx\|_{L^2(\Om)} \nn\\
	&\le& k_8 k_{12} c_1 c_2 \yeps^2(t) + k_8 k_{12} c_1 c_2 \yeps(t)
	\qquad \mbox{for all } t\in (0,T)
  \eea
  as well as
  \be{8.13}
	\io \hgeps''(\Teps) \Tepsx^2 \uepsx \wepsx
	\le k_9 k_{12} c_1 c_2 \yeps^2(t) + k_9 k_{12} c_1 c_2 \yeps(t)
	\qquad \mbox{for all } t\in (0,T)
  \ee
  and
  \bea{8.14}
	- \eps \io \hgeps'(\Teps) \Tepsx \uepsxxx \vepsxx
	&\le& k_7\eps\cdot (c_2\yeps(t)+c_2)^\frac{1}{2} \|\uepsxxx\|_{L^2(\Om)} \|\vepsxx\|_{L^2(\Om)} \nn\\
	&\le& k_7\eps\cdot (c_2\yeps(t)+c_2)^\frac{1}{2} \cdot \Big( \frac{k_{12} \yeps(t)}{\eps}\Big)^\frac{1}{2} \cdot
		\big( k_{12} \yeps(t)\big)^\frac{1}{2} \nn\\
	&\le& k_7 k_{12} c_2^\frac{1}{2} \cdot (\yeps(t)+1)^\frac{1}{2} \yeps(t)
	\qquad \mbox{for all } t\in (0,T),
  \eea
  because $\eps<1$.
  Apart from that, by means of Young's inequality we may draw on the dissipation-related contributions to (\ref{3.1}) in estimating
  \bea{8.15}
	& & \hs{-12mm}
	- \eps \io \gaeps(\Teps) \vepsxxx^2
	- \eps \io \gaeps'(\Teps) \Tepsx \vepsxx \vepsxxx
	- \eps \io \hgeps'(\Teps) \Tepsx \uepsxx \vepsxxx \nn\\
	&\le& - k_1 \eps \io \vepsxxx^2
	+ k_6 \eps \cdot (c_2\yeps(t)+c_2)^\frac{1}{2} \io |\vepsxx| \cdot |\vepsxxx|
	+ k_7 \eps \cdot (c_2\yeps(t)+c_2)^\frac{1}{2} \io |\uepsxx| \cdot |\vepsxxx| \nn\\
	&\le& - k_1 \eps \io \vepsxxx^2
	+ \frac{k_1\eps}{2} \io \vepsxxx^2
	+ \frac{k_6^2 \eps (c_2\yeps(t)+c_2)}{k_1} \io \vepsxx^2 \nn\\
	& & + \frac{k_1\eps}{2} \io \vepsxxx^2
	+ \frac{k_7^2 \eps (c_2\yeps(t)+c_2)}{k_1} \io \uepsxx^2 \nn\\
	&\le& \frac{(k_6^2+k_7^2) k_{12} c_2}{k_1} \yeps^2(t) + \frac{(k_6^2+k_7^2) k_{12} c_2}{k_1} \yeps(t)
	\qquad \mbox{for all } t\in (0,T)
  \eea
  and
  \bea{8.16}
	& & \hs{-20mm}
	- 2\eps^2 \io \hgeps(\Teps) \uepsxxxx^2
	- 2\eps^2 \io \hgeps'(\Teps) \Tepsx \uepsxxx \uepsxxxx \nn\\
	&\le& - 2k_3\eps^2 \io \uepsxxxx^2
	+ 2k_7\eps^2 (c_2\yeps(t)+c_2)^\frac{1}{2} \io |\uepsxxx| \cdot |\uepsxxxx| \nn\\
	&\le& \frac{k_7^2 \eps^2 (c_2\yeps(t)+c_2)}{2k_3} \io \uepsxxx^2 \nn\\
	&\le& \frac{k_7^2 k_{12} c_2}{2k_3} \yeps^2(t) + \frac{k_7^2 k_{12} c_2}{2k_3} \yeps(t)
	\qquad \mbox{for all } t\in (0,T),
  \eea
  again since $\eps<1$.
  As, finally, on the right-hand side of (\ref{2.1}) we have
  \bas
	\frac{1}{2} \io \uepsxx^2
	+ \frac{1}{2} \io \vepsxx^2
	\le \frac{k_{12}}{2} \yeps(t)
	\qquad \mbox{for all } t\in (0,T),
  \eas
  collecting (\ref{8.2})-(\ref{8.5}) and (\ref{8.8})-(\ref{8.16}) we infer from (\ref{y}), (\ref{3.1}), (\ref{2.1})
  and Lemma \ref{lem4} that
  \be{8.17}
	\yeps'(t) + k_1\eps \io \vepsxxx^2
	\le c_3 \yeps^2(t) + c_4 (\yeps(t)+1)^\frac{1}{2} \yeps(t) + c_5\yeps(t)
	\qquad \mbox{for all } t\in (0,T)
  \ee
  with $c_3:=(k_6+k_7+k_8+k_9) k_{12} c_1 c_2 + \frac{(k_6^2+k_7^2) k_{12} c_2}{k_1} + \frac{k_7^2 k_{12} c_2}{2k_3}$,
  $c_4:=(k_6+2k_7) k_{12} c_2^\frac{1}{2}$ and
  $c_5:= k_4 k_{12} + \frac{k_6 k_{12} M}{2} + 2k_7 k_{12} M + (k_6+k_7+k_8+k_9) k_{12} c_1 c_2 + \frac{(k_6^2+k_7^2) k_{12} c_2}{k_1}
  + \frac{k_7^2 k_{12} c_2}{2k_3} + \frac{k_{12}}{2}$.
  Since Young's inequality guarantees that
  \bas
	c_4(\yeps(t)+1)^\frac{1}{2} \yeps(t)
	\le \frac{c_4}{2} (\yeps(t)+2) \yeps(t)
	\qquad \mbox{for all } t\in (0,T),
  \eas
  from (\ref{8.17}) we obtain (\ref{8.1}) upon an obvious choice of $k_{13}$.
\qed
The above property will be supplemented by the following simple observation concerned with the initial instant.
\begin{lem}\label{lem9}
  Let $\epss$ and $(\yeps)_{\eps\in (0,\epss)}$ be as in Lemma \ref{lem4} and (\ref{y}).
  Then there exist $\epsss\in (0,\epss)$, $y_0>0$ and $k_{14}>0$ such that
  \be{9.1}
	\yeps(0) \le y_0
	\qquad \mbox{for all } \eps\in (0,\epsss)
  \ee
  and
  \be{9.3}
	\io v_{0\eps x}^2 \le k_{14}
	\qquad \mbox{for all } \eps\in (0,\epsss).
  \ee
\end{lem}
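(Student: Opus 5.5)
Both bounds follow from the convergences in (\ref{iec}) and (\ref{iec2}), together with the uniform bounds on the coefficients from (\ref{gaepsc}); no dynamics is involved, since $\yeps(0)$ contains only the initial data.

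\textbf{Step 1: the individual norms.} Write out
\bas
	\yeps(0)=\frac{1}{2}\io w_{0\eps x}^2+\frac{1}{2}\io \gaeps(\Theta_{0\eps}) v_{0\eps xx}^2+\io \hgeps(\Theta_{0\eps}) u_{0\eps xx}v_{0\eps xx}+\frac{B}{2}\io u_{0\eps xx}^2+\eps\io \hgeps(\Theta_{0\eps}) u_{0\eps xxx}^2 .
\eas
By (\ref{iec}), the families $(w_{0\eps})$ in $W^{1,2}(\Om)$ and $(u_{0\eps}),(v_{0\eps})$ in $W^{2,2}(\Om)$ converge. Hence there are $\eps_1\in(0,\epss)$ and $c_1>0$ such that for $\eps\in(0,\eps_1)$,
\bas
	\|w_{0\eps x}\|_{L^2(\Om)},\ \|v_{0\eps xx}\|_{L^2(\Om)},\ \|u_{0\eps xx}\|_{L^2(\Om)},\ \|v_{0\eps x}\|_{L^2(\Om)}\le c_1 .
\eas
By (\ref{iec2}) we may also assume $\eps\|u_{0\eps xxx}\|_{L^2(\Om)}^2\le 1$ for these $\eps$. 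The bound on $\|v_{0\eps x}\|_{L^2(\Om)}$ already gives (\ref{9.3}) with $k_{14}:=c_1^2$.

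\textbf{Step 2: the coefficients.} Lemma \ref{lem01} gives $0\le\Theta_{0\eps}\le M/8$. By (\ref{gaepsc}), $\gaeps\to\gamma$ and $\hgeps\to\hg$ uniformly on $[0,M]$, so after shrinking $\eps_1$ there is $c_2>0$ with $\gaeps,\hgeps\le c_2$ on $[0,M]$ for all $\eps\in(0,\eps_1)$. The mixed term is handled by Cauchy--Schwarz, $\big|\io \hgeps(\Theta_{0\eps})u_{0\eps xx}v_{0\eps xx}\big|\le c_2c_1^2$.

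\textbf{Conclusion.} Summing the five terms gives (\ref{9.1}) with $\epsss:=\eps_1$ and
\bas
	y_0:=\frac{c_1^2}{2}+\frac{c_2c_1^2}{2}+c_2c_1^2+\frac{Bc_1^2}{2}+c_2 .
\eas
The only point needing attention is the term $\eps\io\hgeps(\Theta_{0\eps})u_{0\eps xxx}^2$. The approximants are bounded only in $W^{2,2}$, so this term cannot be controlled by (\ref{iec}); this is why the extra hypothesis (\ref{iec2}) was imposed, and with it the term is harmless.
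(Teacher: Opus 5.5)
Your proof is correct and is the paper's argument written out in full: the paper simply declares both bounds evident from (\ref{iec}) and (\ref{iec2}). You supply the details it leaves implicit, namely bounding the coefficients $\gaeps(\Theta_{0\eps})$ and $\hgeps(\Theta_{0\eps})$ via (\ref{gaepsc}) and Lemma \ref{lem01}, and using (\ref{iec2}) to control $\eps\io \hgeps(\Theta_{0\eps}) u_{0\eps xxx}^2$.
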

\proof
  Both these properties are evident consequences of (\ref{iec}) and (\ref{iec2}).
\qed
In fact, we can thereby draw from Lemma \ref{lem8} the following conclusion, yet conditional by assuming (\ref{tM}) to hold.
\begin{lem}\label{lem10}
  Let $M$ and $\epsss$ be as in Lemma \ref{lem01} and Lemma \ref{lem9}.
  Then there exist $k_{15}>0$ and $T_0>0$ such that whenever $\eps\in (0,\epsss)$
  and $T\in (0,\tme)$ are such that (\ref{tM}) holds, we have
  \be{10.1}
	\io \wepsx^2(\cdot,t) \le k_{15}
	\qquad \mbox{for all } t\in (0,T)\cap (0,T_0)
  \ee
  and
  \be{10.2}
	\io \vepsxx^2(\cdot,t) \le k_{15}
	\qquad \mbox{for all } t\in (0,T)\cap (0,T_0)
  \ee
  and
  \be{10.3}
	\io \uepsxx^2(\cdot,t) \le k_{15}
	\qquad \mbox{for all } t\in (0,T)\cap (0,T_0)
  \ee
  as well as
  \be{10.33}
	\eps \int_0^t \io \vepsxxx^2 \le k_{15}
	\qquad \mbox{for all } t\in (0,T)\cap (0,T_0).
  \ee
\end{lem}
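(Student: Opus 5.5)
The plan is to integrate the differential inequality (\ref{8.1}) from Lemma \ref{lem8} by an ODE comparison argument, using the uniform initial bound (\ref{9.1}) from Lemma \ref{lem9}. The constant $T_0$ will be chosen so that the comparison ODE stays bounded. Once $\yeps$ is controlled, the lower bound (\ref{5.1}) from Lemma \ref{lem5} turns this into the individual estimates (\ref{10.1})--(\ref{10.3}).

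First, $\yeps\ge 0$ on $(0,T)$ by (\ref{5.1}), since (\ref{tM}) is assumed. Dropping the nonnegative dissipation term in (\ref{8.1}) gives $\yeps'\le k_{13}\yeps^2+k_{13}\yeps\le 2k_{13}(\yeps+1)^2$. Setting $z_\eps:=\yeps+1$, we get $z_\eps'\le 2k_{13}z_\eps^2$ with $z_\eps(0)\le y_0+1$. Hence $(1/z_\eps)'\ge -2k_{13}$, and so
\[
\frac{1}{z_\eps(t)}\ge \frac{1}{y_0+1}-2k_{13}t .
\]
I choose $T_0:=\frac{1}{4k_{13}(y_0+1)}$. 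For $t<\min\{T,T_0\}$ this yields $z_\eps(t)\le 2(y_0+1)$, that is, $\yeps(t)\le 2y_0+1=:c_1$. This constant is independent of $\eps\in(0,\epsss)$ and of $T$.

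Next, (\ref{5.1}) gives $\io\wepsx^2+\io\vepsxx^2+\io\uepsxx^2\le k_{12}c_1$ on $(0,T)\cap(0,T_0)$, which yields (\ref{10.1})--(\ref{10.3}). For (\ref{10.33}), integrate (\ref{8.1}) over $(0,t)$:
\[
k_1\eps\int_0^t\io\vepsxxx^2\le \yeps(0)-\yeps(t)+k_{13}\int_0^t(\yeps^2+\yeps)\le y_0+k_{13}T_0(c_1^2+c_1),
\]
where $-\yeps(t)\le 0$ was used. Dividing by $k_1$ and taking $k_{15}$ as the maximum of the resulting constants completes the argument.

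The only delicate point is regularity at $t=0$. Differentiability of $\yeps$ and the identity $\yeps(0)=\lim_{t\searrow0}\yeps(t)$ both follow from the regularity in (\ref{reg}) with $C^\infty$ data. One caveat is that (\ref{reg}) only gives $C^{2,1}$ up to $t=0$, while $\yeps$ involves $\uepsxxx$. So I would justify continuity of $\yeps$ at $0$ via the smoothness and compatibility of the initial data: $u_{0\eps x}$ and the other derivatives of the data have compact support. Alternatively, I would run the comparison on $[\tau,t]$ and let $\tau\searrow0$. I expect no real obstacle beyond this.
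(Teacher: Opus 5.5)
Your proof is correct and follows essentially the paper's route: both integrate (\ref{8.1}) from the $\eps$-independent initial bound (\ref{9.1}) to obtain an $\eps$-independent $T_0$, then read off (\ref{10.1})--(\ref{10.3}) from (\ref{5.1}) and get (\ref{10.33}) by integrating (\ref{8.1}) in time. The only difference is cosmetic: the paper uses a continuity argument with threshold $N=2(y_0+1)$, bounding $\yeps'$ by the constant $k_{13}N^2+k_{13}N$ as long as $\yeps\le N$, whereas you integrate the Riccati inequality for $1/(\yeps+1)$ explicitly; your remark on continuity of $\yeps$ at $t=0$ addresses a point the paper uses without comment.
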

\proof
  With $B$ and $k_{12}$ taken from Lemma \ref{lem5}, and with $y_0$ and $k_{14}$ as provided by Lemma \ref{lem9}, we let
  \be{10.4}
	N:=2(y_0+1)
  \ee
  and define
  \be{10.5}
	T_0:=\frac{1}{k_{13} N^2 + k_{13} N},
  \ee
  where $k_{13}$ is as in Lemma \ref{lem8}.
  Assuming (\ref{tM}) to hold with some $\eps\in (0,\epsss)$ and $T\in (0,\tme)$, we then infer from the continuity of the function
  $\yeps$ from (\ref{y}) that thanks to (\ref{9.1}) and (\ref{10.4}),
  \bas
	T_1:=\sup \Big\{ T'\in (0,\tme) \ \Big| \ \yeps(t) \le N \mbox{ for all } t\in (0,T') \Big\}
  \eas
  is well-defined with $T_1\in (0,\tme] \subset (0,\infty]$, and we claim that, in fact,
  \be{10.6}
	T_1 \ge \min \big\{ T \, , \, T_0\big\}.
  \ee
  To see this, we first note that in line with (\ref{tM}), from Lemma \ref{lem8} we know that
  \bea{10.8}
	\yeps'(t) + k_1 \eps \io \vepsxxx^2
	&\le& k_{13} \yeps^2(t) + k_{13} \yeps(t) \nn\\
	&\le& k_{13} N^2 + k_{13} N
	\qquad \mbox{for all } t\in (0,T)\cap (0,T_1),
  \eea
  whence assuming for contradiction that we had
  \be{10.7}
	T_1<T
	\qquad \mbox{and} \qquad
	T_1<T_0,
  \ee
  according to (\ref{10.5}) we could particularly infer from (\ref{10.8}) that
  \bas
	\yeps(t)
	&\le& \yeps(0) + (k_{13} N^2 + k_{13} N) \cdot t \\
	&\le& \yeps(0) + (k_{13} N^2 + k_{13} N) \cdot T_0 \\
	&\le& y_0+1.
  \eas
  In view of (\ref{10.4}), however, this would mean that
  \bas
	\yeps(t) \le \frac{N}{2}
	\qquad \mbox{for all } t\in (0,T_1),
  \eas
  which, again by continuity of $\yeps$, is incompatible with (\ref{10.7}) and thereby asserts (\ref{10.6}).\\
  It thus follows that $\yeps(t)\le N$ for all $t\in [0,T_1)$, so that since an integration in (\ref{10.8}) thereupon shows that
  \bas
	k_1 \eps \int_0^t \io \vepsxxx^2
	&\le& \yeps(0) + (k_{13}N^2 + k_{13} N)\cdot t \\
	&\le& N + (k_{13}N^2 + k_{13} N)\cdot T_0
	\qquad \mbox{for all } t\in (0,T)\cap (0,T_0),
  \eas
  all the estimates in (\ref{10.1})-(\ref{10.33}) have been verified if $k_{15}$ is chosen suitably large.
\qed
Now within a suitable self map type framework involving regularization properties of the Neumann heat semigroup,
the estimates in (\ref{10.1}), (\ref{10.2}) and (\ref{10.33})
can be used to make sure that the hypothesis in (\ref{tM}) is indeed satisfied in throughout a time interval
of appropriately small but $\eps$-independent length:
\begin{lem}\label{lem11}
  Let $M$, $\epss$ and $T_0$ be as provided by Lemma \ref{lem01}, Lemma \ref{lem9} and Lemma \ref{lem10}.
  Then one can find $\Ts\in (0,T_0)$ such that
  \be{11.1}
	\|\Teps(\cdot,t)\|_{L^\infty(\Om)}
	+ \|\Tepst(\cdot,t)\|_{L^\infty(\Om)}
	\le M
	\qquad \mbox{for all $t\in (0,\Ts)\cap (0,\tme)$ and } \eps\in (0,\epsss).
  \ee
\end{lem}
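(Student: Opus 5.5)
We argue by a continuity (bootstrap) argument. For fixed $\eps\in(0,\epsss)$ we let
\[
T_{1,\eps}:=\sup\Big\{T\in(0,\tme)\ \Big|\ \|\Teps(\cdot,t)\|_{L^\infty(\Om)}+\|\Tepst(\cdot,t)\|_{L^\infty(\Om)}\le M \mbox{ for all } t\in(0,T)\Big\}.
\]
By Lemma \ref{lem01}, the value of this sum at $t=0$ is at most $\frac{M}{8}$. Since $\Teps\in C^{2,1}(\bom\times[0,\tme))$, the function $\Tepst$ is continuous up to $t=0$, where it equals $\Theta_{0\eps t}$. Hence $T_{1,\eps}>0$. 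On $(0,T_{1,\eps})$ hypothesis (\ref{tM}) holds, so Lemma \ref{lem10} supplies the $\eps$-independent bounds (\ref{10.1})–(\ref{10.33}) on $(0,T_{1,\eps})\cap(0,T_0)$. It then suffices to find $\Ts\in(0,T_0)$, independent of $\eps$, such that on $(0,\min\{T_{1,\eps},\Ts\})$ these bounds force the sum to be at most $\frac{M}{2}$. By continuity this gives $T_{1,\eps}\ge\min\{\Ts,\tme\}$, which is (\ref{11.1}).

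\emph{Bound for $\Teps$.} We use the Duhamel representation $\Teps(t)=e^{Dt\Del}\Theta_{0\eps}+\int_0^t e^{D(t-s)\Del}\{\Gaeps(\Teps)\vepsx^2\}ds$. By (\ref{l5}), (\ref{4.1}) and (\ref{l2}),
\[
\|\Teps(t)\|_{L^\infty}\le \frac{M}{8}+c\,k_5\int_0^t (t-s)^{-1/2}\|\vepsx\|_{L^2}^2\,ds .
\]
Since $\io\vepsx=0$ by the Neumann condition, Poincaré's inequality and (\ref{10.2}) give $\|\vepsx\|_{L^2}^2\le c\,k_{15}$. Alternatively one can simply use $\|\vepsx\|_{L^\infty}\le c\|\vepsxx\|_{L^2}$. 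Either way the integral is $O(t^{1/2})$, and choosing $\Ts$ small makes it at most $\frac{M}{8}$.

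\emph{Bound for $\Tepst$.} We set $z:=\Tepst$. It solves $z_t=Dz_{xx}+\Gaeps'(\Teps)z\,\vepsx^2+2\Gaeps(\Teps)\vepsx\vepsxt$ with $z_x=0$ on $\pO$ and $z(0)=\Theta_{0\eps t}$. We write its variation-of-constants formula and estimate each term in $L^\infty$:
\begin{itemize}
\item the initial term is at most $\frac{M}{8}$ by (\ref{l5}) and (\ref{1.1});
\item the term $\Gaeps'(\Teps)z\vepsx^2$ is bounded via (\ref{4.2}), $|z|\le M$ and $\|\vepsx\|_{L^\infty}\le c\,k_{15}^{1/2}$, so it contributes $O(t)$.
\end{itemize}
The critical term is $2\Gaeps\vepsx\vepsxt$, where $\vepsxt=\eps\vepsxxx+\wepsx$.
\begin{itemize}
\item The $\wepsx$-part is bounded in $L^2$ by (\ref{10.1}). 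Since $\|\vepsx\|_{L^\infty}$ is bounded, the product lies in $L^2$, and the smoothing estimate $\|e^{\tau\Del}\vp\|_{L^\infty}\le c\tau^{-1/4}\|\vp\|_{L^2}$ yields $O(t^{3/4})$.
\item The $\eps\vepsxxx$-part is the main obstacle: we only know $\eps\int_0^t\io\vepsxxx^2\le k_{15}$ from (\ref{10.33}), so $\|\eps\vepsxxx\|_{L^2}$ has size $\eps^{1/2}$ merely in $L^2$ in time. Applying the $L^2\to L^\infty$ smoothing with Cauchy–Schwarz in time gives
\[
\int_0^t(t-s)^{-1/4}\eps\|\vepsx\vepsxxx\|_{L^2}\,ds\le c\,\eps^{1/2}\Big(\int_0^t(t-s)^{-1/2}ds\Big)^{1/2}\Big(\eps\int_0^t\!\!\io\vepsxxx^2\Big)^{1/2}\le c\,k_{15}^{1/2}t^{1/4}.
\]
This is small for small $t$ uniformly in $\eps<1$, and even carries an extra factor $\eps^{1/2}$.
\end{itemize}
If this route ran into difficulties, the fallback would be to write $\eps\vepsx\vepsxxx=\eps(\vepsx\vepsxx)_x-\eps\vepsxx^2$. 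The derivative is then moved onto the semigroup via an $L^2$ version of (\ref{l3}) (decay $(t-s)^{-3/4}$), and $\eps\vepsxx^2$ is bounded in $L^1$ and handled with (\ref{l2}).

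Collecting these estimates, every contribution other than the initial terms is bounded by $C(t^{1/4}+t^{1/2}+t^{3/4}+t)$ with $C$ independent of $\eps$. We choose $\Ts<T_0$ so small that this is at most $\frac{M}{8}$ for each of the two quantities. Then on $(0,\min\{T_{1,\eps},\Ts\})$ the sum is at most $\frac{M}{2}$. If $T_{1,\eps}<\min\{\Ts,\tme\}$, continuity of $\Teps$ and $\Tepst$ would extend the bound $\le M$ beyond $T_{1,\eps}$, contradicting its definition. This yields (\ref{11.1}).
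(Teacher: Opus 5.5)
Your proposal is correct and is essentially the paper's own proof. Both use the same continuity argument on the first time the bound $M$ in (\ref{tM}) fails, Lemma \ref{lem10} for $\eps$-independent bounds, a Duhamel estimate with $L^1\to L^\infty$ smoothing for $\Teps$, and $L^2\to L^\infty$ smoothing combined with Cauchy--Schwarz in time (exploiting (\ref{10.33})) for $\Tepst$. The only cosmetic difference is that you treat the $\wepsx$ and $\eps\vepsxxx$ parts of $\vepsxt$ separately, whereas the paper bounds $\int_0^t\|2\Gaeps(\Teps)\vepsx\vepsxt\|_{L^2(\Om)}^2$ in one go.

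One small correction: $\io\vepsx$ need not vanish, since it is the difference of the boundary values of $\veps$. The Poincar\'e/Sobolev bound you use holds instead because $\vepsx=0$ on $\pO$, i.e.\ $\vepsx\in W^{1,2}_0(\Om)$, exactly as in (\ref{11.3}).
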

\proof
  According to a standard smoothing property of the Neumann heat semigroup $(e^{t\Del})_{t\ge 0}$ on $\Om$ (\cite{win_JDE2010}),
  we can fix $c_1>0$ and $c_2>0$ such that whenever $\vp\in C^0(\bom)$,
  \be{11.2}
	\|e^{tD\Del}\vp\|_{L^\infty(\Om)}
	\le c_1 t^{-\frac{1}{2}} \|\vp\|_{L^1(\Om)}
	\qquad \mbox{for all } t\in (0,T_0)
  \ee
  and
  \be{11.22}
	\|e^{tD\Del}\vp\|_{L^\infty(\Om)}
	\le c_2 t^{-\frac{1}{4}} \|\vp\|_{L^2(\Om)}
	\qquad \mbox{for all } t\in (0,T_0),
  \ee
  and by continuity of the embedding $W^{1,2}(\Om) \hra L^\infty(\Om)$ we can pick some $c_3>0$ fulfilling
  \be{11.3}
	\|\vp\|_{L^\infty(\Om)} \le c_3\|\vp_x\|_{L^2(\Om)}
	\qquad \mbox{for all } \vp\in W_0^{1,2}(\Om).
  \ee
  Taking $k_5$, $k_{10}$ and $k_{15}$ as obtained in Lemma \ref{lem4} and Lemma \ref{lem10},
  we then let $\Ts\in (0,T_0)$ be small enough such that
  \be{11.4}
	2c_1 c_3^2 k_5 k_{15} |\Om| \Ts^\frac{1}{2} \le \frac{M}{8}
  \ee
  and
  \be{11.44}
	4\sqrt{2} c_2 c_3^2 k_5 k_{15}^2 \cdot (1+T_0) \Ts^\frac{1}{4} + c_3^2 k_{10} k_{15} \Ts \le \frac{M}{8},
  \ee
  and for fixed $\eps\in (0,\epsss)$ we set
  \be{11.5}
	T_\eps:=\sup \Big\{ T'\in (0,\tme) \ \Big| \ \|\Teps(\cdot,t)\|_{L^\infty(\Om)} + \|\Tepst(\cdot,t)\|_{L^\infty(\Om)} \le M
		\mbox{ for all } t\in (0,T') \Big\},
  \ee
  noting that $T_\eps\in (0,\tme]$ is well-defined and positive by Lemma \ref{lem01} and the continuity of $\Teps$ and
  $\Tepst$ in $\bom\times [0,\tme)$.
  Now if we had
  \be{11.6}
	T_\eps < \Ts,
  \ee
  then since $T_\eps\le T_0$ by (\ref{11.4}), relying on (\ref{11.5}) we could apply Lemma \ref{lem10} to find that
  \be{11.7}
	\io \wepsx^2 \le k_{15},
	\quad
	\io \vepsxx^2 \le k_{15}
	\quad \mbox{and} \quad
	\eps \int_0^t \io \vepsxxx^2 \le k_{15}
	\qquad \mbox{for all } t\in (0,T_\eps),
  \ee
  which due to (\ref{11.3}) would particularly entail that
  \be{11.8}
	\|\vepsx\|_{L^\infty(\Om)} \le c_3 \sqrt{k_{15}}
	\qquad \mbox{for all } t\in (0,T_\eps).
  \ee
  For the functions
  \be{11.9}
	h_{1,\eps}:=\Gaeps(\Teps) \vepsx^2,
	\quad
	h_{2,\eps}:=2\Gaeps(\Teps) \vepsx \vepsxt
	\quad \mbox{and} \quad
	h_{3,\eps}:=\Gaeps'(\Teps) \vepsx^2,
  \ee
  we would therefore obtain that, by Lemma \ref{lem4} and (\ref{11.5}),
  \be{11.10}
	\|h_{1,\eps}\|_{L^1(\Om)} \le k_5 \io \vepsx^2
	\le c_3^2 k_5 k_{15} |\Om|
	\qquad \mbox{for all } t\in (0,T_\eps)
  \ee
  and
  \be{11.11}
	\|h_{3,\eps}\|_{L^\infty(\Om)}
	\le k_{10} \|\vepsx\|_{L^\infty(\Om)}^2
	\le c_3^2 k_{10} k_{15}
	\qquad \mbox{for all } t\in (0,T_\eps),
  \ee
  and that, once more thanks to (\ref{0eps}),
  \bea{11.12}
	\int_0^t \|h_{2,\eps}(\cdot,s)\|_{L^2(\Om)}^2 ds
	&\le& 2k_5 \int_0^t \|\vepsx(\cdot,s)\|_{L^\infty(\Om)}^2 \|\vepsxt(\cdot,s)\|_{L^2(\Om)}^2 ds \nn\\
	&=& 2k_5 \int_0^t \|\vepsx(\cdot,s)\|_{L^\infty(\Om)}^2 \|\eps \vepsxxx(\cdot,s)+\wepsx(\cdot,s)\|_{L^2(\Om)}^2 ds \nn\\
	&\le& 4k_5 \int_0^t \|\vepsx(\cdot,s)\|_{L^\infty(\Om)}^2 \cdot \big\{
		\eps^2 \|\vepsxxx(\cdot,s)\|_{L^2(\Om)}^2 + \|\wepsx(\cdot,s)\|_{L^2(\Om)}^2 \big\} ds \nn\\
	&\le& 4k_5 \cdot c_3^2 k_{15} \cdot (\eps k_{15} + k_{15} T_0) \nn\\
	&\le& 4c_3^2 k_5 k_{15}^2 \cdot (1+T_0)
	\qquad \mbox{for all } t\in (0,T_\eps),
  \eea
  because $\eps\le 1$.\abs
  To make appropriate use of this, we rely on Duhamel representations associated with the fourth sub-problem in (\ref{0eps})
  in verifying that since $e^{t\Del}$ is nonexpansive on $L^\infty(\Om)$ according to the maximum principle,
  Lemma \ref{lem01} along with (\ref{11.9}) and (\ref{11.4}) imply that
  \bea{11.13}
	\|\Teps(\cdot,t)\|_{L^\infty(\Om)}
	&=& \bigg\| e^{dT\Del} \Theta_{0\eps} + \int_0^t e^{D(t-s)\Del} h_{1,\eps}(\cdot,s) ds \bigg\|_{L^\infty(\Om)} \nn\\
	&\le& \|\Theta_{0\eps}\|_{L^\infty(\Om)}
	+ c_1 \int_0^t (t-s)^{-\frac{1}{2}} \|h_{1,\eps}(\cdot,s)\|_{L^1(\Om)} ds \nn\\
	&\le& \frac{M}{8}
	+ c_1 c_3^2 k_5 k_{15} |\Om| \int_0^t (t-s)^{-\frac{1}{2}} ds \nn\\
	&\le& \frac{M}{8}
	+ 2 c_1 c_3^2 k_5 k_{15} |\Om| t^\frac{1}{2} \nn\\
	&\le& \frac{M}{4}
	\qquad \mbox{for all } t\in (0,T_\eps).
  \eea
  Similarly, using that $\Tepstt=D(\Tepst)_{xx} + h_{2,\eps} + h_{3,\eps} \Tepst$ in $\Om\times (0,\tme)$ and
  $(\Tepst)_x=0$ on $\pO\times (0,\tme)$ by (\ref{0eps}) and (\ref{11.9}), on employing (\ref{11.2}) together with
  the Cauchy-Schwarz inequality, Lemma \ref{lem01}, (\ref{11.2}), (\ref{11.1}), (\ref{11.5}) and (\ref{11.44}) we see that
  \bas
	\|\Tepst(\cdot,t)\|_{L^\infty(\Om)}
	&=& \bigg\| e^{Dt\Del} \Theta_{0\eps t}
	+ \int_0^t e^{D(t-s)\Del} h_{2,\eps}(\cdot,s) ds
	+ \int_0^t e^{D(t-s)\Del} \big\{ h_{3,\eps}(\cdot,s) \Tepst(\cdot,s)\big\} ds \bigg\|_{L^\infty(\Om)} \nn\\
	&\le& \|\Theta_{0\eps t}\|_{L^\infty(\Om)}
	+ c_2 \int_0^t (t-s)^{-\frac{1}{4}} \|h_{2,\eps}(\cdot,s)\|_{L^2(\Om)} ds \nn\\
	& & + \int_0^t \|h_{3,\eps}(\cdot,s)\|_{L^\infty(\Om)} \|\Tepst(\cdot,s)\|_{L^\infty(\Om)} ds \nn\\
	&\le& \|\Theta_{0\eps t}\|_{L^\infty(\Om)}
	+ c_2\cdot\bigg\{ \int_0^t (t-s)^{-\frac{1}{2}} ds \bigg\}^\frac{1}{2} \cdot
		\bigg\{ \int_0^t \|h_{2,\eps}(\cdot,s)\|_{L^2(\Om)}^2 ds \bigg\}^\frac{1}{2} \nn\\
	& & + \int_0^t \|h_{3,\eps}(\cdot,s)\|_{L^\infty(\Om)} \|\Tepst(\cdot,s)\|_{L^\infty(\Om)} ds \nn\\
	&\le& \frac{M}{8}
	+ 4\sqrt{2} c_2 c_3^2 k_5 k_{15}^2 \cdot (1+T_0) \cdot t^\frac{1}{4}
	+ c_3^2 k_{10} k_{15} M t \nn\\
	&\le& \frac{M}{4}
	\qquad \mbox{for all } t\in (0,T_\eps).
  \eas
  In combination with (\ref{11.13}), this shows that our hypothesis would lead to the conclusion that
  \bas
	\|\Teps(\cdot,t)\|_{L^\infty(\Om)}
	+ \|\Tepst(\cdot,t)\|_{L^\infty(\Om)}
	\le \frac{M}{2}
	\qquad \mbox{for all } t\in (0,T_\eps),
  \eas
  which again by continuity of $\Teps$ and $\Tepst$ clearly is incompatible with (\ref{11.6}).
  We consequently must have $T_\eps\ge \Ts$, whence (\ref{11.1}) is entailed by the mere definition in (\ref{11.5}).
\qed
As a consequence, our approximate solutions exist at least up to the time specified above, and satisfy
estimates naturally implied by those gained from Lemma \ref{lem10} and Lemma \ref{lem11}:
\begin{cor}\label{cor12}
  Let $\epsss$ and $\Ts$ be as in Lemma \ref{lem9} and Lemma \ref{lem11}.
  Then
  \be{12.01}
	\tme \ge \Ts
	\qquad \mbox{for all } \eps\in (0,\epsss),
  \ee
  and there exists $C>0$ such that
  \be{12.1}
	\|\weps(\cdot,t)\|_{W^{1,2}(\Om)} \le C
	\qquad \mbox{for all $t\in (0,\Ts)$ and } \eps\in (0,\epsss),
  \ee
  that
  \be{12.2}
	\|\veps(\cdot,t)\|_{W^{2,2}(\Om)} \le C
	\qquad \mbox{for all $t\in (0,\Ts)$ and } \eps\in (0,\epsss),
  \ee
  that
  \be{12.3}
	\|\ueps(\cdot,t)\|_{W^{2,2}(\Om)} \le C
	\qquad \mbox{for all $t\in (0,\Ts)$ and } \eps\in (0,\epsss),
  \ee
  that
  \be{12.4}
	\|\Tepst(\cdot,t)\|_{L^\infty(\Om)} \le C
	\qquad \mbox{for all $t\in (0,\Ts)$ and } \eps\in (0,\epsss),
  \ee
  and that
  \be{12.5}
	\|\Teps(\cdot,t)\|_{W^{2,\infty}(\Om)} \le C
	\qquad \mbox{for all $t\in (0,\Ts)$ and } \eps\in (0,\epsss).
  \ee
\end{cor}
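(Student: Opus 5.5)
Combining Lemma \ref{lem11} with the extensibility criterion (\ref{ext_eps}) will give (\ref{12.01}); the estimates (\ref{12.1})--(\ref{12.5}) will then follow from Lemma \ref{lem10}, Lemma \ref{lem7} and the mass identities (\ref{mass}).

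First, fix $\eps\in(0,\epsss)$ and suppose for contradiction that $\tme<\Ts$. Then Lemma \ref{lem11} shows that (\ref{tM}) holds for every $T\in(0,\tme)$. Since $\Ts<T_0$, Lemma \ref{lem10} bounds $\io\wepsx^2$, $\io\vepsxx^2$ and $\io\uepsxx^2$ by $k_{15}$ on all of $(0,\tme)$. By (\ref{mass}), $\weps$, $\veps$ and $\ueps$ have zero spatial mean, and $\vepsx,\uepsx$ lie in $W^{1,2}_0(\Om)$ by the Neumann conditions. The Poincar\'e--Sobolev inequality (\ref{8.6}) therefore yields bounds for $\|\weps\|_{L^\infty(\Om)}$ (via $\|\weps\|_{W^{1,2}(\Om)}$), for $\|\veps\|_{W^{1,2}(\Om)}$ and for $\|\ueps\|_{W^{1,2}(\Om)}$. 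In fact the argument will give full $W^{2,2}$ bounds for $\veps$ and $\ueps$. Since moreover $\|\Teps\|_{L^\infty(\Om)}\le M$, every quantity in (\ref{ext_eps}) stays bounded as $t\nearrow\tme$, contradicting finiteness of $\tme$. Hence $\tme\ge\Ts$.

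With (\ref{12.01}) in hand, Lemma \ref{lem11} provides (\ref{tM}) for every $T\in(0,\Ts)$, uniformly in $\eps\in(0,\epsss)$. Lemma \ref{lem10} then applies on $(0,\Ts)$. The mean-zero argument above turns the resulting $L^2$ bounds on $\wepsx$, $\vepsxx$, $\uepsxx$ into (\ref{12.1})--(\ref{12.3}), with constants independent of $\eps$ because (\ref{8.6}) does not involve $\eps$. Estimate (\ref{12.4}) is immediate from (\ref{11.1}).

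For (\ref{12.5}), I would apply Lemma \ref{lem7}. It bounds $\|\Tepsxx\|_{L^\infty(\Om)}+\|\Tepsx\|_{L^\infty(\Om)}^2$ by $k_{11}\|\vepsx\|_{L^\infty(\Om)}^2+k_{11}$. By (\ref{8.6}) applied to $\vepsx\in W^{1,2}_0(\Om)$ and by (\ref{12.2}), the right-hand side is bounded uniformly in $\eps$ and $t$. Together with $\|\Teps\|_{L^\infty(\Om)}\le M$ this gives the $W^{2,\infty}$ bound.

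The only delicate step is the contradiction argument for (\ref{12.01}). One has to check that Lemmas \ref{lem10} and \ref{lem11} were formulated on $(0,T)\cap(0,T_0)$ with $T<\tme$ arbitrary, so that they apply on the whole of $(0,\tme)$ when $\tme<\Ts$. One also has to check that the $\veps$- and $\ueps$-norms in (\ref{ext_eps}) are controlled through (\ref{mass}), since Lemma \ref{lem10} controls only second derivatives. Both points are satisfied as stated.
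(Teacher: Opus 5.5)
Your proposal is correct and follows the paper's route: Lemma \ref{lem11} combined with (\ref{ext_eps}) gives (\ref{12.01}), and Lemma \ref{lem10}, (\ref{mass}) and the embedding $W^{1,2}(\Om)\hra L^\infty(\Om)$ then give the uniform bounds. Your appeal to Lemma \ref{lem7} for (\ref{12.5}) simply packages that same embedding argument applied to $\vepsx$. One small correction: to control the mean-zero functions $\weps,\veps,\ueps$ themselves you need the Poincar\'e--Wirtinger inequality, not (\ref{8.6}), since (\ref{8.6}) is stated only for $W^{1,2}_0(\Om)$.
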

\proof
  Thanks to the continuity of the embedding $W^{1,2}(\Om) \hra L^\infty(\Om)$,
  all claimed properties immediately result from Lemma \ref{lem11} when combined with Lemma \ref{lem10},
  (\ref{mass}) and (\ref{ext_eps}).
\qed
In addition to the above, let us record some regularity properties of time derivatives.
\begin{lem}\label{lem13}
  There exists $C>0$ such that with $\epsss$ and $\Ts$ as in Lemma \ref{lem9} and Lemma \ref{lem11},
  \be{13.1}
	\|\wepst(\cdot,t)\|_{(W^{1,2}(\Om))^\star} \le C
	\qquad \mbox{for all $t\in (0,\Ts)$ and } \eps\in (0,\epsss),
  \ee
  that
  \be{13.2}
	\|\vepst(\cdot,t)\|_{L^2(\Om)} \le C
	\qquad \mbox{for all $t\in (0,\Ts)$ and } \eps\in (0,\epsss),
  \ee
  and that
  \be{13.3}
	\|\uepst(\cdot,t)\|_{L^2(\Om)} \le C
	\qquad \mbox{for all $t\in (0,\Ts)$ and } \eps\in (0,\epsss).
  \ee
\end{lem}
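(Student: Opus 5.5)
The three estimates follow directly by reading off the first three equations in (\ref{0eps}), with the right-hand sides controlled by Corollary \ref{cor12}. No new energy argument is needed.

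For (\ref{13.3}), the third equation gives $\uepst=\eps\uepsxx+\veps$. Since $\eps<1$, we get
\[
\|\uepst\|_{L^2(\Om)}\le \|\uepsxx\|_{L^2(\Om)}+\|\veps\|_{L^2(\Om)},
\]
and both terms are bounded by (\ref{12.3}) and (\ref{12.2}) for all $t\in(0,\Ts)$ and $\eps\in(0,\epsss)$. In the same way, $\vepst=\eps\vepsxx+\weps$ gives
\[
\|\vepst\|_{L^2(\Om)}\le\|\vepsxx\|_{L^2(\Om)}+\|\weps\|_{L^2(\Om)},
\]
which is bounded by (\ref{12.2}) and (\ref{12.1}). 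This proves (\ref{13.2}).

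For (\ref{13.1}), fix $\psi\in W^{1,2}(\Om)$. We test the first equation in (\ref{0eps}) against $\psi$ and integrate by parts, using the homogeneous Neumann conditions $\wepsx=\vepsx=\uepsx=0$ on $\pO$. This yields
\[
\io\wepst\psi=-\eps\io\wepsx\psi_x-\io\gaeps(\Teps)\vepsx\psi_x-\io\hgeps(\Teps)\uepsx\psi_x-\al\io\weps\psi .
\]
By Lemma \ref{lem11}, hypothesis (\ref{tM}) holds on $(0,\Ts)$, so Lemma \ref{lem4} gives $\gaeps(\Teps)\le k_2$ and $\hgeps(\Teps)\le k_4$ there. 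The Cauchy--Schwarz inequality then shows that $|\io\wepst\psi|$ is at most
\[
\Big(\|\wepsx\|_{L^2(\Om)}+k_2\|\vepsx\|_{L^2(\Om)}+k_4\|\uepsx\|_{L^2(\Om)}+\al\|\weps\|_{L^2(\Om)}\Big)\|\psi\|_{W^{1,2}(\Om)}.
\]
Each norm in the bracket is uniformly bounded by (\ref{12.1})--(\ref{12.3}). Taking the supremum over $\|\psi\|_{W^{1,2}(\Om)}\le 1$ gives (\ref{13.1}).

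The only point requiring care is that the bounds (\ref{4.1}) on $\gaeps(\Teps)$ and $\hgeps(\Teps)$ are available on all of $(0,\Ts)$ and uniformly in $\eps\in(0,\epsss)$. This is exactly what Lemma \ref{lem11} combined with Lemma \ref{lem4} provides, and (\ref{12.01}) ensures that $(0,\Ts)\subset(0,\tme)$. The rest is routine.
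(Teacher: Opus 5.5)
Your proposal is correct and follows the paper's proof essentially step for step. Estimates (\ref{13.2}) and (\ref{13.3}) are read off from the second and third equations in (\ref{0eps}) via Corollary \ref{cor12}. Estimate (\ref{13.1}) comes from testing the first equation against $\psi$, integrating by parts using the Neumann conditions, and applying Cauchy--Schwarz together with Corollary \ref{cor12} and the upper bounds in Lemma \ref{lem4}, which Lemma \ref{lem11} makes available on $(0,\Ts)$.
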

\proof
  For $\psi\in C^\infty(\bom)$ fulfilling $\|\psi\|_{L^2(\Om)} + \|\psi_x\|_{L^2(\Om)} \le 1$, (\ref{0eps}) implies that
  thanks to the Cauchy-Schwarz inequality,
  \bas
	\bigg| \io \wepst \psi\bigg|
	&=& \bigg| - \al \io \weps \psi
	- \eps\io \wepsx \psi_x
	- \io \gaeps(\Teps) \vepsx \psi_x
	- \io \hgeps(\Teps) \uepsx \psi_x \bigg| \\
	&\le& \al \|\weps\|_{L^2(\Om)}
	+ \eps \|\wepsx\|_{L^2(\Om)}
	+ \|\gaeps(\Teps)\|_{L^\infty(\Om)} \|\vepsx\|_{L^2(\Om)}
	+ \|\hgeps(\Teps)\|_{L^\infty(\Om)} \|\uepsx\|_{L^2(\Om)}
  \eas
  for all $t\in (0,\tme)$ and $\eps\in (0,1)$,
  whence (\ref{13.1}) results from Corollary \ref{cor12} and Lemma \ref{lem4}.\\
  Both (\ref{13.2}) and (\ref{13.3}) are immediate consequences of Corollary \ref{cor12}, because
  for all $t\in (0,\tme)$ and $\eps\in (0,1)$,
  \bas
	\|\vepst\|_{L^2(\Om)}
	\le \eps \|\vepsxx\|_{L^2(\Om)}
	+ \|\weps\|_{L^2(\Om)}
	\le \|\vepsxx\|_{L^2(\Om)}
	+ \|\weps\|_{L^2(\Om)}
  \eas
  and
  \bas
	\|\uepst\|_{L^2(\Om)}
	\le \eps \|\uepsxx\|_{L^2(\Om)}
	+ \|\veps\|_{L^2(\Om)}
	\le \|\uepsxx\|_{L^2(\Om)}
	+ \|\veps\|_{L^2(\Om)}
  \eas
  by (\ref{0eps}).
\qed
Based on the estimates collected in Corollary \ref{cor12} and Lemma \ref{lem13}, a straightforward extraction procedure now
indeed yields a strong solution of (\ref{0}) as a limit of solutions to (\ref{0eps}),
defined up to the time $\Ts$ from Lemma \ref{lem11}:
\begin{lem}\label{lem14}
  If $\epsss$ and $\Ts$ are as found in Lemma \ref{lem9} and Lemma \ref{lem11},
  then one can find $(\eps_j)_{j\in\N} \subset (0,\epsss)$ as well as functions
  \be{14.1}
	\lbal
	u \in C^0([0,\Ts];W^{2,2}_N(\Om))
	\qquad \mbox{and} \\[1mm]
	\Theta\in C^0([0,\Ts];C^1(\bom)) \cap C^{2,1}(\bom\times (0,\Ts)) \cap W^{1,\infty}(\Om\times (0,\Ts))
	\ear
  \ee
  with
  \be{14.2}
	u_t\in C^0([0,\Ts];C^1(\bom)) \cap L^\infty((0,\Ts);W^{2,2}_N(\Om))
  \ee
  and
  \be{14.3}
	u_{tt} \in C^0(\bom\times [0,\Ts]) \cap L^\infty((0\Ts);W^{1,2}(\Om))
  \ee
  such that $\Theta\ge 0$ in $\Om\times (0,\Ts)$, that $\eps_j\searrow 0$ as $j\to\infty$ and
  \begin{eqnarray}
	& & \ueps\to u
	\qquad \mbox{in } C^0([0,\Ts];C^1(\bom)),
	\label{14.4} \\
	& & \veps\to u_t
	\qquad \mbox{in } C^0([0,\Ts];C^1(\bom)),
	\label{14.5} \\
	& & \weps\to u_{tt}
	\qquad \mbox{in } C^0(\bom\times [0,\Ts]),
	\label{14.6} \\
	& & \Teps\to \Theta
	\qquad \mbox{in } C^0([0,\Ts];C^1(\bom)),
	\label{14.7} \\
	& & \Tepst\wsto \Theta_t
	\qquad \mbox{in } L^\infty(\Om\times (0,\Ts))
	\qquad \qquad \mbox{and}
	\label{14.8} \\
	& & \Tepsxx\wsto \Theta_{xx}
	\qquad \mbox{in } L^\infty(\Om\times (0,\Ts))
	\label{14.9}
  \end{eqnarray}
  as $\eps=\eps_j\searrow 0$, and that $(u,\Theta)$ forms a strong solution of (\ref{0}) in $\Om\times (0,\Ts)$
  in the sense specified in Definition \ref{dw}.
\end{lem}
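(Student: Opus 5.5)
We will proceed by compactness, based on the $\eps$-independent bounds from Corollary \ref{cor12} and Lemma \ref{lem13}, using Aubin--Lions/Arzel\`a--Ascoli type arguments.

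\textbf{Step 1: compactness.} By (\ref{12.3}) and (\ref{13.3}), $(\ueps)$ is bounded in $L^\infty((0,\Ts);W^{2,2}(\Om))$ with $(\uepst)$ bounded in $L^\infty((0,\Ts);L^2(\Om))$. Since $W^{2,2}(\Om)\hra C^1(\bom)$ compactly, the Aubin--Lions lemma in its $C^0$ version (Simon) yields relative compactness in $C^0([0,\Ts];C^1(\bom))$, which gives (\ref{14.4}). The same reasoning with (\ref{12.2}) and (\ref{13.2}) gives (\ref{14.5}). For $\weps$ we combine (\ref{12.1}) with (\ref{13.1}) and the compact embedding $W^{1,2}(\Om)\hra C^0(\bom)\hra (W^{1,2}(\Om))^\star$, which yields (\ref{14.6}). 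For $\Teps$, (\ref{12.5}) and (\ref{12.4}) give boundedness in $L^\infty((0,\Ts);W^{2,\infty}(\Om))$ with time derivatives bounded in $L^\infty$. Because $W^{2,\infty}\hra C^1$ compactly, we obtain (\ref{14.7}); (\ref{14.8}) and (\ref{14.9}) then follow by weak-$\star$ extraction along a further subsequence. Passing to the limit in $\uepst=\eps\uepsxx+\veps$ in the sense of distributions, using $\eps\uepsxx\to0$ in $L^\infty(L^2)$, identifies the limit of $\veps$ as $u_t$. In the same way $\vepst=\eps\vepsxx+\weps$ identifies the limit of $\weps$ as $u_{tt}$. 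The $L^\infty(W^{2,2})$ and $L^\infty(W^{1,2})$ regularity in (\ref{14.2}) and (\ref{14.3}) is inherited by weak-$\star$ lower semicontinuity. The Neumann conditions pass to the limit through the $C^1$ convergences. Finally, $\Theta\ge0$ and $\Theta\in W^{1,\infty}$ are immediate from the limits.

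\textbf{Step 2: the weak identity (\ref{wu}).} Fix $\vp\in C_0^\infty(\bom\times[0,\Ts))$ and test the first equation of (\ref{0eps}) against it. This gives
\bas
	-\int_0^{\Ts}\!\io\weps\vp_t-\io w_{0\eps}\vp(\cdot,0)+\al\int_0^{\Ts}\!\io\weps\vp
	=-\eps\int_0^{\Ts}\!\io\wepsx\vp_x-\int_0^{\Ts}\!\io\gaeps(\Teps)\vepsx\vp_x-\int_0^{\Ts}\!\io\hgeps(\Teps)\uepsx\vp_x .
\eas
Each term converges as $\eps=\eps_j\searrow0$ to the corresponding term of (\ref{wu}). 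The $\eps\wepsx$ term vanishes by (\ref{12.1}). The coefficients converge uniformly thanks to (\ref{gaepsc}), the uniform convergence of $\Teps$, and its uniform bound, and $\vepsx$, $\uepsx$ converge uniformly. The initial term converges by (\ref{iec}). The initial conditions (\ref{w3}) follow from the uniform convergence at $t=0$.

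\textbf{Step 3: the heat equation classically.} This is the main obstacle, since we need $\Theta\in C^{2,1}(\bom\times(0,\Ts))$. First, (\ref{14.8}) and (\ref{14.9}) combined with the uniform convergence of $\Gaeps(\Teps)\vepsx^2$ give $\Theta_t=D\Theta_{xx}+\Gamma(\Theta)u_{xt}^2$ a.e. The source $f:=\Gamma(\Theta)u_{xt}^2$ is only continuous, so we need Hölder regularity of $f$ to apply Schauder theory. Here $u_{xt}$ lies in $L^\infty(W^{1,2})\subset L^\infty(C^{1/2})$ in space. In time, interpolating $u_{xt}\in C^0([0,\Ts];C^0)$ with $u_{xtt}\in L^\infty(L^2)$ shows that $u_{xt}$ is Hölder continuous in time, for instance via $\|u_{xt}(t)-u_{xt}(s)\|_{L^2}\le C|t-s|$ together with Gagliardo--Nirenberg against the $W^{1,2}$ bound. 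Moreover $\Theta\in W^{1,\infty}$, so $f$ is Hölder continuous on $\bom\times[0,\Ts]$. The weak formulation of the heat equation together with uniqueness of weak solutions then allows interior-in-time parabolic Schauder theory (\cite{LSU}) to give $\Theta\in C^{2+\theta,1+\theta/2}(\bom\times[\tau,\Ts))$ for every $\tau>0$, so the equation holds classically and $\Theta_x=0$ on $\pO$. Lastly, $\Theta\in C^0([0,\Ts];C^1(\bom))$ follows from (\ref{14.7}).
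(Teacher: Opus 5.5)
Your proposal is correct and follows essentially the same route as the paper. Both arguments use Aubin--Lions compactness from Corollary \ref{cor12} and Lemma \ref{lem13}, identify $u_t$ and $u_{tt}$ through the second and third equations in (\ref{0eps}), pass to the limit in the weak identity, obtain H\"older regularity of the heat source from a Lipschitz-in-$L^2$ time estimate combined with Gagliardo--Nirenberg, and conclude with standard parabolic Schauder theory. The one claimed property you do not explicitly verify is $u\in C^0([0,\Ts];W^{2,2}_N(\Om))$; it follows at once from $u(\cdot,t)=u_0+\int_0^t u_t(\cdot,s)\,ds$ together with $u_t\in L^\infty((0,\Ts);W^{2,2}_N(\Om))$, which is what the paper's ``and hence also'' refers to.
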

\proof
  Corollary \ref{cor12} and Lemma \ref{lem13} assert that
  \bas
	(\ueps)_{\eps\in (0,\epsss)}
	\mbox{ and }
	(\veps)_{\eps\in (0,\epsss)}
	\mbox{ are bounded in } C^0([0,\Ts];W^{2,2}_N(\Om)),
  \eas
  that
  \bas
	(\weps)_{\eps\in (0,\epsss)}
	\mbox{ is bounded in } C^0([0,\Ts];W^{1,2}(\Om))
  \eas
  and
  \bas
	(\Teps)_{\eps\in (0,\epsss)}
	\mbox{ is bounded in } C^0([0,\Ts];W^{2,\infty}_N(\Om)),
  \eas
  that
  \bas
	(\uepst)_{\eps\in (0,\epsss)}
	\mbox{ and }
	(\vepst)_{\eps\in (0,\epsss)}
	\mbox{ are bounded in } L^\infty((0,\Ts);L^2(\Om)),
  \eas
  that
  \bas
	(\wepst)_{\eps\in (0,\epsss)}
	\mbox{ is bounded in } L^\infty\big( (0,\Ts); (W^{1,2}(\Om))^\star\big),
  \eas
  and that
  \bas
	(\Tepst)_{\eps\in (0,\epsss)}
	\mbox{ is bounded in } L^\infty(\Om\times (0,\Ts)).
  \eas
  By compactness of the embeddings $W^{2,2}_N(\Om) \hra C^1(\bom)$ and $W^{1,2}(\Om) \hra C^0(\bom)$, four applications
  of an Aubin-Lions lemma (\cite{simon}) thus provide $(\eps_j)_{j\in\N}\subset (0,\epsss)$ and functions
  \be{14.10}
	\lbal
	u \in C^0([0,\Ts];C^1(\bom)) \cap L^\infty((0,\Ts);W^{2,2}_N(\Om)), \\[1mm]
	v \in C^0([0,\Ts];C^1(\bom)) \cap L^\infty((0,\Ts);W^{2,2}_N(\Om)), \\[1mm]
	w\in C^0(\bom\times [0,\Ts]) \cap L^\infty((0,\Ts);W^{1,2}(\Om))
	\qquad \mbox{and} \\[1mm]
	\Theta\in C^0([0,\Ts];C^1(\bom)) \cap L^\infty((0,\Ts);W^{2,\infty}_N(\Om))
	\ear
  \ee
  such that $\eps_j\searrow 0$ as $j\to\infty$, that $\Theta_t\in L^\infty(\Om\times (0,\Ts))$,
  and that as $\eps=\eps_j\searrow 0$ we have
  \begin{eqnarray}
	& & \ueps\to u \mbox{ and } \veps\to v
	\qquad \mbox{in } C^0([0,\Ts];C^1(\bom)),
	\label{14.11} \\
	& & \ueps \wsto u \mbox{ and } \veps \wsto v
	\qquad \mbox{in } L^\infty((0,\Ts);W^{2,2}(\Om)),
	\label{14.12}, \\
	& & \weps \to w
	\qquad \mbox{in } C^0(\bom\times [0,\Ts])
	\qquad\qquad \mbox{and}
	\label{14.13} \\
	& & \weps \wsto w
	\qquad \mbox{in } L^\infty((0,\Ts);W^{1,2}(\Om))
	\label{14.14}
  \end{eqnarray}
  as well as (\ref{14.7})-(\ref{14.9}).
  Since (\ref{14.12}) and (\ref{14.14}) together with (\ref{0eps}) imply that
  \bas
	\uepst =\eps\uepsxx + \veps \wsto v
	\quad \mbox{and} \quad
	\vepst = \eps\vepsxx + \weps \wsto w
	\qquad \mbox{in } L^\infty((0,\Ts);L^2(\Om))
  \eas
  as $\eps=\eps_j\searrow 0$, it follows that, in fact, $u_t=v$ and $v_t=w$, so that not only (\ref{14.2})-(\ref{14.3})
  and hence also (\ref{14.1}), but moreover also
  (\ref{14.4})-(\ref{14.6}) result from (\ref{14.10}), (\ref{14.11}) and (\ref{14.13}).\\
  The nonnegativity of $\Theta$ is trivially inherited from that of $\Teps$ for $\eps\in (0,\epsss)$ through (\ref{14.7}),
  and (\ref{wu}) as well as the properties $u(\cdot,0)=u_0$ and $u_t(\cdot,0)=u_{0t}$ can be derived using (\ref{14.4})-(\ref{14.7})
  in a straightforward manner.
  Since (\ref{14.5}) and (\ref{14.7}) ensure that for each $\vp\in C_0^\infty(\bom\times [0,\Ts))$, in the identity
  \bas
	- \int_0^{\Ts} \io \Teps\vp_t
	- \io \Theta_{0\eps} \vp(\cdot,0)
	= - D \int_0^{\Ts} \io \Tepsx \vp_x
	+ \int_0^{\Ts} \io \Gaeps(\Teps) \vepsx^2,
	\qquad \eps\in (0,1),
  \eas
  we may let $\eps=\eps_j\searrow 0$ to verify that
  \bas
	- \int_0^{\Ts} \io \Theta \vp_t - \io \Theta_0 \vp(\cdot,0)
	= - D \int_0^{\Ts} \io \Theta_x \vp_x
	+ \int_0^{\Ts} \io \Gamma(\Theta) u_{xt}^2,
  \eas
  we furthermore obtain the problem
  \be{14.91}
	\lball
	\Theta_t = D\Theta_{xx} + h(x,t),
	\qquad & x\in\Om, \ t\in (0,\Ts), \\[1mm]
	\Theta_x=0,
	\qquad & x\in\pO, \ t\in (0,\Ts), \\[1mm]
	\Theta(x,0)=\Theta_0(x),
	\qquad & x\in\Om,
	\ear
  \ee
  is solved in the standard weak sense specified, e.g., in \cite{LSU}, where we claim that $h:=\Gamma(\Theta) u_{xt}^2$ satisfies
  \be{14.92}
	h\in C^{\vt,\frac{\vt}{2}}(\bom\times [0,\Ts])
  \ee
  with some $\vt\in (0,1)$.
  In fact,
  from (\ref{14.10}) and the identity $v=u_t$ we directly obtain that
  $h\in C^0([0,\Ts];C^0(\bom)) \cap L^\infty((0,\Ts);W^{1,2}(\Om))$,
  and fixing any $\vt_1\in (0,\frac{1}{2})$ we readily infer by means of an interpolation argument that since $W^{1,2}(\Om)$ is
  compactly embedded into $C^{\vt_1}(\bom)$, this implies that
  \be{14.93}
	h \in C^0([0,\Ts];C^{\vt_1}(\bom)).
  \ee
  Apart from that, based on the fact that in the distributional sense we have
  $h_t=\Gamma'(\Theta) \Theta_t u_{xt}^2 + 2\Gamma(\Theta) u_{xt} u_{xtt}$ we see by combining (\ref{14.1}) and (\ref{14.2})
  with (\ref{14.3}) that $h_t\in L^\infty((0,\ts);L^2(\Om))$, and that hence with some $c_1>0$,
  \be{14.94}
	\|h(\cdot,t)-h(\cdot,s)\|_{L^2(\Om)} \le c_1|t-s|
	\qquad \mbox{for all $t\in [0,\Ts]$ and } s\in [0,\Ts].
  \ee
  Taking $c_2>0$ such that in line with a Gagliardo-Nirenberg inequality we have
  \bas
	\|\vp\|_{C^0(\bom)} \le c_2 \|\vp\|_{W^{1,2}(\Om)}^\frac{1}{2} \|\vp\|_{L^2(\Om)}^\frac{1}{2}
	\qquad \mbox{for all } \vp\in W^{1,2}(\Om),
  \eas
  we can thus estimate
  \bas
	\|h(\cdot,t)-h(\cdot,s)\|_{C^0(\bom)}
	\le c_2 \cdot \big\{ 2\|h\|_{L^\infty((0,\Ts);W^{1,2}(\Om))} \big\}^\frac{1}{2} \cdot \big\{ c_1 |t-s| \big\}^\frac{1}{2}
	\qquad \mbox{for all $t\in [0,\Ts]$ and } s\in [0,\Ts],
  \eas
  meaning that in addition to (\ref{14.93}) we know that $h\in C^\frac{1}{2}([0,\Ts];C^0(\bom))$.
  As therefore (\ref{14.92}) indeed follows with some sufficiently small $\vt\in (0,1)$,
  we may return to (\ref{14.91}) and employ standard arguments from parabolic regularity theory (\cite{porzio_vespri}, \cite{LSU})
  to conclude that $\Theta$ actually
  lies in $C^{2,1}(\bom\times (0,\Ts))$ and forms a classical solution of the respective sub-problem in (\ref{0}).
\qed
\mysection{Conclusion}
Our main result, finally, can be achieved by combining Lemma \ref{lem15} and Lemma \ref{lem14} with a straightforward
extension argument as well as a simple generalization to arbitrary initial data merely satisfying (\ref{init}) without
additionally fulfilling (\ref{mass0}).\abs
\proofc of Theorem \ref{theo16}. \quad
  The uniqueness claim has been covered by Lemma \ref{lem15}, and in order to verify the statements concerning existence
  and extensibility, we first consider the case in which (\ref{mass0}) holds, that is, when the numbers
  \be{9999}
	y_0:=\frac{1}{|\Om|} \io u_0,
	\qquad
	y_1:=\frac{1}{|\Om|} \io u_{0t}
	\qquad \mbox{and} \qquad
	y_2:=\frac{1}{|\Om|} \io u_{0tt}
  \ee
  satisfy $y_0=y_1=y_2=0$.
  We may then draw on Lemma \ref{lem14} to see that according to the said uniqueness feature,
  \bas
	\tm:=\sup\Big\{ T>0 \ \Big| \ \mbox{There exists a strong solution of (\ref{0}) in $\Om\times (0,T)$} \Big\}
  \eas
  is a well-defined element of $(0,\infty]$ which has the property that there exists one single strong solution $(u,\Theta)$
  of (\ref{0}) in all of $\Om\times (0,\tm)$.\\
  Now if this solution violated (\ref{ext}), then $\tm<\infty$ and
  \be{16.1}
	\|u_t\|_{W^{2,2}(\Om)}
	+ \|u_{tt}\|_{W^{1,2}(\Om)}
	+ \|\Theta\|_{W^{2,\infty}(\Om)}
	\le c_1
	\qquad \mbox{for all } t\in (t_0,\tm)
  \ee
  with some $c_1>0$ and some $t_0\in (0,\tm)$, which particularly would imply that
  \bea{16.2}
	\|u(\cdot,t)\|_{W^{2,2}(\Om)}
	&=& \bigg\| u(\cdot,t_0) + \int_{t_0}^t u_t(\cdot,s) ds \bigg\|_{W^{2,2}(\Om)} \nn\\
	&\le& \|u(\cdot,t_0)\|_{W^{2,2}(\Om)}
	+ \int_{t_0}^{\tm} \|u_t(\cdot,s)\|_{W^{2,2}(\Om)} ds \nn\\
	&\le& \|u(\cdot,t_0)\|_{W^{2,2}(\Om)}
	+ (\tm-t_0)\cdot c_1
	\qquad \mbox{for all } t\in (t_0,\tm).
  \eea
  On the basis of (\ref{16.1}) and (\ref{16.2}), we could accordingly fix $(t_j)_{j\in\N} \subset (t_0,\tm)$ as well as
  $\wtu_0\in W^{2,2}_N(\Om)$, $\wtu_{0t} \in W^{2,2}_N(\Om)$, $\wtu_{0tt} \in W^{1,2}(\Om)$ and
  $\wtt_0\in W^{2,\infty}_N(\Om;[0,\infty))$
  such that $t_j\nearrow\tm$ as $j\to\infty$, that
  \bas
	u(\cdot,t_j) \wto \wtu_0
	\quad \mbox{and} \quad
	u_t(\cdot,t_j) \wto \wtu_{0t}
	\quad \mbox{in } W^{2,2}(\Om)
	\qquad \mbox{as } j\to\infty,
  \eas
  that
  \bas
	u_{tt}(\cdot,t_j) \wto \wtu_{0tt}
	\quad \mbox{in } W^{1,2}(\Om)
	\qquad \mbox{as } j\to\infty,
  \eas
  and that
  \bas
	\Theta(\cdot,t_j) \wsto \wtt_0
	\quad \mbox{in } W^{2,\infty}(\Om)
	\qquad \mbox{as } j\to\infty.
  \eas
  Another application of Lemma \ref{lem14}, now with $(u_0,u_{0t},u_{0tt},\Theta_0)$ and $t$
  replaced by $(\wtu_0,\wtu_{0t},\wtu_{0tt},\wtt_0)$ and $t-\tm$, would yield $T>\tm$ and a corresponding strong solution
  $(\wtu,\wtt)$ in $\Om\times (\tm,T)$ with $(\wtu,\wtu_t,\wtu_{tt},\wtt)|_{t=\tm}=(\wtu_0,\wtu_{0t},\wtu_{0tt},\wtt_0)$.
  As it can readily be confirmed that then
  \bas
	(\wh{u},\wh{\Theta})(x,t):=\lball
	(u,\Theta)(x,t)
	\qquad & \mbox{if $x\in\bom$ and } t\in [0,\tm), \\[1mm]
	(\wtu,\wtt)(x,t)
	\qquad & \mbox{if $x\in\bom$ and } t\in [\tm,T),
	\ear
  \eas
  would define a strong solution of (\ref{0}) in $\Om\times (0,T)$, this would contradict the maximality of $\tm$.\abs
  Having thus completed the proof in the case when (\ref{mass0}) holds, we are left with the general situation when
  the quantities in (\ref{9999}) are arbitrary real numbers.
  Then the functions
  $\wt{u}_0:=u_0-y_0$, $\wt{u}_{0t}:=u_{0t}-y_1$ and $\wt{u}_{0tt}:=u_{0tt}-y_2$ satisfy
  $\io \wt{u}_0 = \io \wt{u}_{0t} = \io \wt{u}_{0tt}=0$, whence
  repeating the above argument with $(u_0,u_{0t},u_{0tt})$ replaced by $(\wt{u}_0,\wt{u}_{0t},\wt{u}_{0tt})$
  we obtain $\tm\in (0,\infty]$ and a strong solution $(\wt{u},\wt{\Theta})$ of (\ref{0}) in $\Om\times (0,\tm)$
  which is such that if $\tm<\infty$, then
  $\limsup_{t\nearrow \tm} \big\{ \|\wt{u}_t(\cdot,t)\|_{W^{2,2}(\Om)} + \|\wt{u}_{tt}(\cdot,t)\|_{W^{1,2}(\Om)}
  + \|\wt{\Theta}(\cdot,t)\|_{W^{2,\infty}(\Om)} \big\} = \infty$.
  Letting $y\in C^\infty([0,\infty))$ denote the solution of
  $y'''(t)+\al y''(t)=0$
  in $(0,\infty)$ with
  $y(0)=y_0, y'(0)=y_1$ and $y''(0)=y_2$, and defining $u(x,t):=\wt{u}(x,t)+y(t)$ as well as $\Theta(x,t):=\wt{\Theta}(x,t)$
  for $(x,t)\in\bom\times [0,\tm)$, we therefore gain a pair $(u,\Theta)$ of functions which due to the fact that
  $u_x=\wt{u}_x$ and $u_{xt}=\wt{u}_{xt}$ can readily be seen to form
  a strong solution of (\ref{0}) in $\Om\times (0,\tm)$ which satisfies not only (\ref{16.01})-(\ref{16.03}) but moreover also
  (\ref{ext}).
\qed

\bigskip

{\bf Acknowledgment.} \quad
The authors acknowledge support of the Deutsche Forschungsgemeinschaft (Project No. 444955436).
They moreover declare that they have no conflict of interest.\abs
{\bf Data availability statement.} \quad
Data sharing is not applicable to this article as no datasets were
generated or analyzed during the current study.\abs

{\bf Author contribution statement.} \quad
Both authors contributed equally to the conception, execution, and writing of this manuscript.

\end{document}